\newtheorem{thm}{Theorem}[section]
\newtheorem*{thm*}{Theorem}
\newtheorem{lemma}[thm]{Lemma}
\newtheorem{prop}[thm]{Proposition}
\newtheorem{cor}[thm]{Corollary}
\newtheorem*{cor*}{Corollary}
\theoremstyle{definition}
\newtheorem{defn}[thm]{Definition}
\newtheorem{example}[thm]{Example}
\theoremstyle{remark}
\newtheorem{remark}[thm]{Remark}
\newcommand {\Ta}    {\ensuremath{\mathcal{T}}}
\newcommand {\Xa}    {\ensuremath{\mathcal{X}}}
\newcommand {\real}  {\ensuremath{\mathbb{R}}}
\newcommand {\intg}  {\ensuremath{\mathbb{Z}}}
\newcommand {\codim} {\ensuremath{\operatorname{codim}}}
\newcommand {\smlhf} {\ensuremath{\mbox{$\frac{1}{2}$}}}
\newcommand {\cone}  {\ensuremath{\operatorname{cone}}}
\newcommand {\pt}    {\ensuremath{\operatorname{pt}}}
\newcommand {\id}    {\ensuremath{\operatorname{id}}}
\newcommand {\Princ}  {\ensuremath{\operatorname{Princ}}}
\newcommand {\PL}   {{\ensuremath{\operatorname{PL}}}}
\newcommand {\IM}   {{\ensuremath{\operatorname{IM}}}}
\begin{document}

\title{Stratified Formal Deformations and 
  Intersection Homology of Data Point Clouds}

\author{Markus Banagl}

\address{Mathematisches Institut, Universit\"at Heidelberg,
  Im Neuenheimer Feld 205, 69120 Heidelberg, Germany}

\email{banagl@mathi.uni-heidelberg.de}

\thanks{This work is supported by Deutsche Forschungsgemeinschaft 
  (DFG, German Research Foundation) under 
   Germany’s Excellence Strategy EXC-2181/1 - 390900948 
   (the Heidelberg STRUCTURES Excellence Cluster).}

\date{May, 2020}

\author{Tim M\"ader}

\address{Mathematisches Institut, Universit\"at Heidelberg,
  Im Neuenheimer Feld 205, 69120 Heidelberg, Germany}

\email{tmaeder@mathi.uni-heidelberg.de}

\author{Filip Sadlo}

\address{Institut f\"ur wissenschaftliches Rechnen, Universit\"at Heidelberg,
  Im Neuenheimer Feld 205, 69120 Heidelberg, Germany}

\email{sadlo@uni-heidelberg.de}

\subjclass[2010]{55N33, 68}
% 55N33 intersection homology
% 68 computer science

\keywords{Intersection Homology, computational topology,
  algorithms}

\begin{abstract}
Intersection homology is a topological invariant which
detects finer information in a space than ordinary homology.
Using ideas from classical simple homotopy theory,
we construct local combinatorial transformations
on simplicial complexes under which intersection homology
remains invariant. In particular, we obtain the notions of
stratified formal deformations and 
stratified spines of a complex, leading to reductions of
complexes prior to computation of intersection homology.  
We implemented the algorithmic execution
of such transformations, as well as the calculation of
intersection homology, and apply these algorithms to
investigate the intersection homology of stratified spines
in Vietoris-Rips type complexes associated to point sets
sampled near given, possibly singular, spaces.
\end{abstract}

\maketitle

\tableofcontents

\section{Introduction}

In \cite{bendichharer}, Bendich and Harer proposed the use of intersection
homology, a finer invariant than ordinary homology, in the
topological analysis of large data sets. In particular, they
defined persistent intersection homology and described its
algorithmic computation. Bendich and Harer conclude by asking whether
in addition to persistence considerations and the selection criteria 
imposed by intersection
homology, other simplex removal processes might be conducive
to a finer understanding of large data sets.
In the present paper, we adapt ideas from classical simple homotopy theory
(\cite{whitehead}, \cite{cohen}), due to J. H. C. Whitehead,
to the stratified setting in order to construct 
local combinatorial operations on a simplicial complex
under which
intersection homology provably remains invariant.
These operations lead in particular to a class of stratified spines 
of a complex that
are algorithmically computable, generally not homeomorphic to each other, yet
possess isomorphic intersection homology groups. 
Spines are well-known to carry deep information on the topological
complexity of spaces, sometimes even allowing for classification
results, see e.g. \cite{matveev}. 
Carrying over algorithmic simplicial reduction processes to filtered
and stratified settings is in line with the general objective 
in topological data science of
finding ways to reduce the size of complexes (while preserving
topological invariants), before computing homology.
Zomorodian \cite{zomorodian}, for example, 
underscores the advantage of prior reduction
by citing the 
supercubical complexity in the size of the complex of
classical matrix reduction algorithms for computing homology
over the integers, and the quadratic space and cubic time complexity
of Gaussian elimination over field coefficients.

The ordinary homology of a topological space is invariant
under homotopy equivalences. If the space is a closed oriented
manifold, then the ordinary homology satisfies in addition
Poincar\'e duality, relating the groups in complementary dimensions.
The homotopy invariance is of course a fundamental general pillar of
the theory, but allows in particular for applications of homology
in data science, since the simplicial complexes generated from 
data point clouds are usually only homotopy equivalent to an
underlying space whose structure one hopes to detect.
General homotopy equivalences in high dimensions are
very complicated, both conceptually and algorithmically.
This was realized early on in the development of topology
and prompted research on more combinatorial models of homotopy
equivalences. 
In his landmark paper \cite{whitehead}, J. H. C. Whitehead introduced
the notion of simple homotopy equivalence, which for simplicial
complexes can be described entirely combinatorially in terms
of local operations: elementary simplicial collapses and expansions.
This notion together with Whitehead's torsion became very influential 
in high dimensional manifold topology, yielding such results as the
$s$-cobordism theorem. 

Poincar\'e duality on the other hand is the source of numerous invariants 
that play important roles in understanding and classifying manifolds even 
within their simple homotopy type.
However, in real world applications, spaces often possess strata of
non-manifold, i.e. singular, points. Simple examples show that the
presence of such singularities generally invalidates Poincar\'e duality.
Several solutions to this problem exist:
Goresky-MacPherson's intersection homology \cite{gm1}, \cite{gm2},
Cheeger's $L^2$-cohomology \cite{cheeger1}, \cite{cheeger2},
\cite{cheeger3} and the theory of
intersection spaces, \cite{banaglintsp}, due to the first
named author.
The present paper is concerned only with intersection homology.

Now, contrary to ordinary homology, intersection homology
is not invariant under
arbitrary homotopy equivalences, even when these are simple in 
Whitehead's sense. On the other hand, stratum preserving
homotopy equivalences will not alter intersection homology, even
when they are not simple.
A natural question is thus:
Do there exist local simplicial moves that are akin to Whitehead's, but do
preserve given strata and intersection homology?
We report here on the discovery of such moves;
we call them \emph{stratified collapses and expansions}.
In the manifold situation with only one stratum, these coincide
with Whitehead's operations.
The stratified collapses and expansions defined here 
may ultimately lead to a concept of stratified simple homotopy type,
which we shall not fully develop in the present paper.
Classically, finite sequences of elementary collapses and expansions
are called formal deformations. We thus define stratified formal
deformations as finite sequences of stratified collapses and expansions.
Our central theoretical result, Theorem \ref{thm.main}, states that
stratified formal deformations (assumed to preserve formal codimensions 
of strata) are stratified homotopy equivalences in the classical sense. 
An immediate corollary is that intersection homology remains
invariant under such stratified formal deformations
(Corollary \ref{cor.ihstable}). These results can be
reinterpreted as giving conditions under which the intersection
homology of a given space can be computed directly from a
Vietoris-Rips type complex of data points near the space
(Corollary \ref{cor.ihfromvr}), but the conditions may be
hard to check a priori.

Whitehead's theory implies in particular the concept of a spine of
a complex. Intersection homology satisfies Poincar\'e duality
for spaces that are so-called pseudomanifolds. These allow for a notion of
orientability. It is an easy observation 
(see Lemma \ref{lem.pseudomfdisspine})
that if in a given complex
one looks for pseudomanifolds obtainable by simplicial collapses, 
then one must seek them among the
spines of that complex. Therefore, and for reasons mentioned earlier,
spines constitute a particularly
important class of subcomplexes. Our stratified simple homotopy 
transformations
lead to a notion of a \emph{stratified spine} of a stratified complex.
In general, a given complex will have nonhomeomorphic stratified spines, but
our main theorem implies that any two (codimension preserving)
stratified spines have isomorphic intersection homology groups.
Of course a given (ordinary, i.e. unstratified) expansion 
of a pseudomanifold $X$ may not possess a stratified spine which
is homeomorphic to $X$, or which even just has the same intersection
homology as $X$ 
(though it trivially always contains an ordinary spine with that
property, namely $X$ itself).
In such cases, different methods are required to determine the
intersection homology of $X$ from the expansion.

We have implemented the execution of stratified collapses,
computation of stratified spines, and the calculation of intersection
homology with $\intg/2\intg$-coefficients in Python, and use
this implementation to provide
various examples of stratified spines, and their intersection
homology, of Delaunay-Vietoris-Rips type complexes associated to
data points sampled near various singular spaces.
From the computational point of view, the idea of determining
a spine prior to computing homology has the advantage of
leading to smaller matrices representing chain complex
boundary operators.
Vietoris-Rips and similar complexes generally contain a vast number
of topologically insignificant simplices, and spines tend to reduce
that number substantially. In particular, the dimension of the spine
will often be lower than the dimension of the original complex.
This also means that in practice the ``true'' codimension of a singular
point can be better estimated from its geometric codimension in the spine
than from its geometric codimension in the original complex.

Interesting questions not treated in this paper concern sampling
methods and stratification learning.  
The question, for example, of describing conditions on point distributions
near a singular space $X$ that will ensure that the associated 
\v{C}ech- or Vietoris-Rips-type complex contains stratified spines
homeomorphic (say) to $X$ requires quite different methods.
Throughout the paper we assume, as do Bendich and Harer in \cite{bendichharer},
that for each point it is known whether to deem it regular or singular.
See however \cite{blrs} for some suggestions of heuristics.

The paper is organized as follows:
Section \ref{sec.simplcollexpspines} recalls basic material on 
simplicial complexes and
Whitehead's classical simple homotopy theory, particularly the
notions of simplicial collapses, expansions and spines.
Intersection homology is reviewed in 
Section \ref{sec.ih}. We define it in a rather general setting of
filtered spaces and do not limit ourselves to pseudomanifolds.
Perversity functions are also allowed to be more general than in
the classical papers of Goresky and MacPherson.
Section \ref{sec.stratspines} develops the central notions of 
stratified collapses
and expansions, as well as stratified spines. Several results are
proven that clarify the effect of commuting these
operations; these results are used in the algorithm design. 
To relate formal combinatorial collapses to
continuous deformations, we introduce in Section \ref{sec.freelyorthogdefretr}
particular representatives of stratified homotopy equivalences
called freely orthogonal deformation retractions.
The main theoretical results are contained in 
Section \ref{sec.formaldefhtpytypeih}, while
Section \ref{sec.implementation} discusses algorithm design 
and miscellaneous aspects
of our computer implementation. We conclude with exemplary executions 
of these algorithms on various sampled point clouds
in Section \ref{sec.examples}.\\

\textbf{Acknowledgements:}
We would like to thank Bastian Rieck for
early discussions on ordinary spines and computations using his
\texttt{Aleph}-package.

\section{Simplicial Collapses, Expansions and Spines}
\label{sec.simplcollexpspines}

We use the term \emph{simplicial complex} in the sense of \cite[\S 3, p. 15]{munkres};
see also \cite[2.27, p. 26]{rosa}.
Thus a simplicial complex is a set of finite nonempty sets such that if
$s\in K$, then every nonempty subset (``face'') of $s$ is also in $K$.
The elements $s \in K$ of a simplicial complex $K$ are referred to as its 
(closed) simplices. The dimension of a simplex $s$ is its cardinality minus $1$.
The $0$-dimensional simplices of $K$ will be called \emph{vertices}.
The set of vertices of $K$ will be denoted $K^0$.
Note that a simplex is thus uniquely determined by
its vertices.
For $s,t \in K$, it will be convenient to write
$t\leq s$ if $t$ is a face of $s$, and $t<s$ if $t$ is a proper
face of $s$.
The associated \emph{polyhedron} (or \emph{geometric realization})
of a simplicial complex $K$ is a topological space denoted by $|K|$.
Thus we are careful to distinguish between a simplicial complex $K$ and the 
topological space $|K|$. 
If we wish to distinguish denotationally between 
regarding a simplex $s$ as an element of $K$
or as a closed subset of $|K|$ (given by the convex hull of its vertices), 
then we shall write $|s|$ for the latter.
Points $x\in |K|$ can be described using \emph{barycentric coordinates}:
there are unique numbers $x_u \in [0,1]$ for every vertex $u\in K^0$ such that
\[ x= \sum_{u\in K^0} x_u u,~ \text{  and } \sum_{u\in K^0} x_u =1. \]
Note that our simplicial complexes are abstract
and do not come equipped with an embedding into some Euclidean space.
This is in line with the intended applications in topological data science,
where simplicial complexes such as the \v{C}ech complex or Vietoris-Rips
complex of a data point cloud are given as abstract complexes without
a preferred embedding in a Euclidean space.
\begin{defn}
A simplex in $K$ is called \emph{principal} (in $K$), if it is not
a proper face of any simplex in $K$.
\end{defn}
Note that if $K$ is finite-dimensional, then all top-dimensional
simplices are principal, but there may well be principal simplices
that are not top-dimensional. 
\begin{defn}
A simplex $s$ in $K$ is called \emph{free} (in $K$) if\\
(1) $s$ is a proper face of a principal simplex $p$ in $K$, and\\
(2) $s$ is not a proper face of any simplex in $K$ other than $p$. 
\end{defn}
Thus if $s$ is a free simplex, then it is the proper face of
precisely one simplex $p\in K$, and this $p$ must be principal.
We may hence put
\[ \Princ_K (s) := p. \]
If $K$ is understood, then we shall also simply write $\Princ (s)$.
\begin{lemma} \label{lem.freehascodim1}
If $s\in K$ is free, then
$\dim \Princ (s) - \dim s =1.$
\end{lemma}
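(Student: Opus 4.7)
The plan is to argue by contradiction using the fact that in a simplicial complex, every subset of a simplex (that is in the complex) is itself in the complex. Since $s$ is a proper face of the principal simplex $p = \Princ(s)$, we certainly have $\dim p - \dim s \geq 1$, so the only thing to rule out is the case $\dim p - \dim s \geq 2$.

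First I would suppose for contradiction that $\dim p \geq \dim s + 2$. Then the set-theoretic difference $p \setminus s$ contains at least two vertices; I pick any one vertex $v \in p \setminus s$ and consider the intermediate simplex $t := s \cup \{v\}$. Since $t$ is a nonempty subset of $p \in K$, the axioms of a simplicial complex (as recalled at the start of Section~\ref{sec.simplcollexpspines}) force $t \in K$.

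Next I would check the three relations $s < t$, $t \leq p$, and $t \neq p$. The first is clear since $v \notin s$, so $s$ is a proper subset of $t$. The second holds by construction. For the third, $\dim t = \dim s + 1 \leq \dim p - 1 < \dim p$, so $t$ is strictly smaller than $p$. Hence $t$ is a simplex of $K$ distinct from $p$ having $s$ as a proper face, which directly contradicts condition~(2) of the freeness of $s$.

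I do not foresee a real obstacle here: the statement is essentially a sanity check on the definition, and the argument is complete once one observes that inserting a single vertex of $p \setminus s$ into $s$ always produces an intermediate face living in $K$. Combined with $\dim p > \dim s$, this forces the codimension to be exactly one.
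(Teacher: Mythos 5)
Your argument is correct and coincides with the paper's own proof, up to the added (welcome) detail of explicitly constructing the intermediate simplex $t = s \cup \{v\}$ rather than merely asserting its existence. No issues.
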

\begin{proof}
Since $s<\Princ (s),$ we have the bound $\dim s \leq \dim \Princ (s)-1.$
Suppose that $\dim s < \dim \Princ (s)-1.$
Then there is a simplex $t\in K$ such that $s<t<\Princ (s)$.
That is, $s$ is a proper face of $t$ but $t\not= \Princ (s)$,
a contradiction to the freeness of $s$.
\end{proof}
To define the notion of an elementary collapse of simplices, let
$K$ be a simplicial complex and $s\in K$ a free simplex.
Then
\[ K' := K - \{ s, \Princ_K (s) \} \]
is again a simplicial complex, as it is still closed under the operation of 
taking faces: If $t$ is any simplex in $K'$ and $t_0$ is a face of $t$,
then $t_0$ is still in $K'$, for otherwise $t_0$ would have to be
equal to $s$ or $\Princ (s)$. If $t_0 =s$, then $s$ would be a proper face
of both $t$ and $\Princ (s)$ with $t\not= \Princ (s)$. This contradicts
the freeness of $s$ in $K$. On the other hand, if $t_0 = \Princ (s)$, 
then $t_0$ would be principal in $K$ and so could not be a proper face of $t$.
The following definition can be traced back at least to Whitehead's
paper \cite{whitehead}, though precursors date back even further.
\begin{defn}
We say that $K'$ has been obtained from $K$ by an
\emph{elementary collapse} (of $\Princ (s)$, using its free face $s$).
We also say that $K$ has been obtained from $K'$ by an 
\emph{elementary expansion}.
\end{defn}
If $K'$ has been obtained from $K$ by an elementary collapse using the
free simplex $s$, then we shall also write $K' = K_s$. If $s'$ is a 
free simplex in $K_s$, we shall also write $(K_s)_{s'}$ as $K_{s,s'}$.
The space $|K_s|$ is a deformation retract of the space $|K|$.
In particular, $|K_s|$ and $|K|$ are homotopy equivalent, but more is
true: $|K_s|$ and $|K|$ are in fact simple homotopy equivalent
in the sense of simple homotopy theory, \cite{cohen}.
\begin{defn}
We say that a simplicial complex $K$ 
\emph{collapses} (simplicially) to a subcomplex $L\subset K$,
if $L$ can be obtained from $K$ by a finite sequence of elementary collapses.
In this case we shall write $K\searrow L$.
The complex $L$ is then also said to \emph{expand} to $K$,
written $L\nearrow K$.
\end{defn}
If $K$ collapses to $L$, then
the space $|L|$ is a deformation retract of the space $|K|$, and
$|L|$ and $|K|$ have the same simple homotopy type.
By its very definition, the relation $\searrow$ is transitive:
if $K \searrow K'$ and $K' \searrow K''$, then $K\searrow K''$.
Due to their combinatorial nature, collapses can be carried
out algorithmically, see Section \ref{sec.implementation}.
\begin{defn}
A finite sequence
\[ K=K_0 \to K_1 \to \cdots \to K_m =L,  \]
where each arrow represents a simplicial expansion or collapse,
is called a \emph{formal deformation} from $K$ to $L$.
\end{defn}
\begin{defn}
A subcomplex $S\subset K$ of a simplicial complex $K$ is called a
\emph{spine} of $K$, if\\
(1) $K$ collapses to $S$, and\\
(2) $S$ does not possess a free simplex.
\end{defn}
If $S$ is a spine of $K$, then we shall also refer to the polyhedron
$|S|$ as a \emph{spine} of the polyhedron $|K|$.
Note that condition (2) is an absolute condition on $S$, independent of
the ambient complex $K$. So if $L \subset K$ is a subcomplex with
$K\searrow L$ and $S$ is a spine of $L$, then $S$ is also a spine of $K$. 
Furthermore, for such $K,L$, if $S$ is a spine of $K$ such that  $L\searrow S$,
then $S$ is a spine of $L$.
These observations allow for induction arguments involving spines.

\section{Filtered Spaces and Intersection Homology}
\label{sec.ih}

A filtered topological space has intersection homology groups.
Filtrations arise for example when a space has singular, i.e. non-manifold, points 
near which the space does not look like a Euclidean space. 
Stratification theory then tries to
organize the singular points, according to their local type, into strata which 
themselves are manifolds (of various dimensions). The filtration is obtained
by taking unions of such strata. Stratifications can generally be constructed
for polyhedra of simplicial complexes, orbit spaces of group actions on manifolds, 
real or complex algebraic varieties, as well as for many interesting compactifications
of noncompact spaces, e.g. various moduli spaces. A chief motivation for developing
intersection homology was to find a theory which satisfies a form of
Poincar\'e duality for singular spaces. 
This duality principle holds for the ordinary homology of 
(closed, oriented) manifolds,
but ceases to hold for ordinary homology in the presence of singularities.
Intersection homology was introduced by Goresky and MacPherson in
\cite{gm1} and \cite{gm2}; a comprehensive treatment can be found in
\cite{borel}.
Modern resources on intersection homology
theory include \cite{kirwanwoolf}, \cite{friedman} and \cite{banagltiss}. 

\begin{defn}
A \emph{filtered space} is a Hausdorff topological space $X$ together with a
filtration
\[ X = X_n \supset X_{n-1} \supset \cdots \supset X_0 \supset X_{-1} = \varnothing  \]
by closed subsets. We call $n$ the \emph{formal dimension} of $X$.
The connected components of the difference sets
$X_j - X_{j-1}$ are called the \emph{strata of formal dimension $j$} of the filtered space.
If $S$ is a stratum of formal dimension $j$, then $\codim S := n-j$ 
is its \emph{formal codimension} in $X$. Since all strata contained in $X_j - X_{j-1}$
have the same formal codimension $n-j$, we may call this number the formal codimension of 
$X_j - X_{j-1}$.
We write $\Xa$ for the set of strata. The strata contained in $X_n - X_{n-1}$
are called the \emph{regular strata}. The other strata are called \emph{singular}.
The union of the singular strata is called the \emph{singular set} of $X$ and is 
denoted by $\Sigma_X$.
\end{defn}
Note that a stratum is singular if and only if its formal codimension is positive.
In practice there are various ways to assign formal codimensions to
strata: Without further a priori knowledge or preprocessing, one may
simply wish to take the geometric codimension in a given polyhedron.
Better estimates may be obtained by first computing stratified spines
and then taking geometric codimension in the spine,
or by computing persistent local homology groups near singular points.

We recall the definition of the singular intersection homology groups 
$IH_i^{\bar{p}} (X)$ of a filtered space $X$. The basic idea is the following:
Suppose that $X$ is a filtered space for which global orientability can be defined,
for example, a so-called pseudomanifold.
If one defines a chain complex by allowing only
chains that are transverse to the strata and computes its $i$-th homology, 
then, by a theorem of McCrory (see e.g. \cite{mccrory}, Theorem 5.2), 
one obtains the cohomology $H^{n-i} (X)$ of $X,$ 
where $n$ is the dimension of $X$ and $X$ is assumed to satisfy
a certain normality condition. Hence,
if one could move every chain to be transverse to the stratification, then
Poincar\'e duality would hold if $X$ is orientable. 
However, as noted above, classical Poincar\'e duality fails for 
general oriented singular spaces,
thus so does transversality. The idea of Goresky and MacPherson was to introduce a parameter,
which they called a ``perversity,'' that specifies the allowable deviation 
from full
transversality, and to associate a group to each value of the parameter,
thereby obtaining a whole spectrum of groups ranging from cohomology
to homology. The following definition of perversity is somewhat more general
than the original definition in \cite{gm1}.
\begin{defn} \label{def.ihperversity}
A \emph{perversity} $\bar{p}$ 
is a function associating to each natural number
$k=0,1,2,\ldots$ an integer $\bar{p} (k)$ such that $\bar{p} (0) =0$.
\end{defn}
The natural number $k$ is to be thought of as the codimension of a stratum.
Let $X$ be a filtered space and $\bar{p}$ a perversity.
Recall that a \emph{singular $i$-simplex} $\sigma$ in $X$ is a continuous
map $\sigma: \Delta^i \to X$, where $\Delta^i$ denotes the standard $i$-simplex.
The degree-$i$ singular chain group $C_i (X)$ of $X$ is freely generated by the
singular $i$-simplices in $X$. Using the alternate sum of the codimension $1$ faces
of a singular simplex, one obtains a boundary map $\partial_i: C_i (X)\to C_{i-1} (X)$.
Together with these boundary maps, the $C_i (X),$ $i=0,1,2,\ldots,$ form a chain
complex $C_* (X),$ the singular chain complex of $X$. Its homology is ordinary
homology $H_* (X)$. 
\begin{defn}
A singular $i$-simplex $\sigma: \Delta^i \to X$ is called
\emph{$\bar{p}$-allowable} if for every stratum $S\in \Xa$, the preimage
$\sigma^{-1} (S)$ is contained in the union of all
$(i-\codim (S) + \bar{p}(\codim S))$-dimensional faces of $\Delta^i$.
A chain $\xi \in C_i (X)$ is called \emph{$\bar{p}$-allowable}, if
every singular simplex $\sigma$ appearing with nonzero coefficient in $\xi$
is $\bar{p}$-allowable.
\end{defn}

\begin{defn}
The \emph{group $IC_i^{\bar{p}} (X)$ of $i$-dimensional (singular) intersection
chains of perversity $\bar{p}$} is defined to be the subgroup of $C_i (X)$
given by all chains $\xi$ such that $\xi$ and $\partial_i \xi$ is $\bar{p}$-allowable.
\end{defn}
The boundary map $\partial_i$ on $C_i (X)$ restricts to a map
$\partial_i : IC_i^{\bar{p}} (X)\longrightarrow 
  IC_{i-1}^{\bar{p}} (X)$
due to the condition imposed on $\partial_i \xi.$ Thus
$\{ (IC_i^{\bar{p}} (X), \partial_i ) \}$ forms a subcomplex
$IC_*^{\bar{p}} (X)$ of the singular chain complex $C_* (X).$
\begin{defn}
The homology groups of the singular intersection chain complex,
\[ IH_i^{\bar{p}} (X) = H_i ( IC_*^{\bar{p}} (X)), \]
are called the \emph{perversity $\bar{p}$ (singular) intersection homology groups} of
the filtered space $X.$
\end{defn}
The groups thus defined have $\intg$-coefficients,
$IH_i^{\bar{p}} (X) = IH_i^{\bar{p}} (X;\intg).$
Groups $IH_i^{\bar{p}} (X;G)$ with coefficients in any abelian
group $G$ can also be defined in a straightforward manner.
For the implementation of our algorithms, we chose to work
with $G=\intg/2\intg$, as did Bendich and Harer \cite{bendichharer}.

Contrary to ordinary homology, intersection homology is
not invariant under general homotopy equivalences.
However, if one places suitable filtration conditions on a homotopy equivalence,
the intersection homology groups do become invariant under such equivalences.
Let us look at such conditions in more detail.
We shall from now on, for simplicity of exposition and because
this context already illustrates all important scientific issues, restrict our
attention to filtered spaces of the form
\begin{equation} \label{equ.twostrataspaces}
X=X_n \supset X_{n-1} = \cdots =X_{n-k+1} =
X_{n-k} = \Sigma_X \supset X_{n-k-1} = \cdots = X_{-1} = \varnothing, 
\end{equation}
which we will briefly denote as pairs $(X,\Sigma_X)$.
(Thus $\Sigma_X$ has formal codimension $k$.)
The methods introduced in the present paper can, without major difficulty,
be extended to more general filtrations.
\begin{defn}
Let $(X,\Sigma_X)$ and $(Y,\Sigma_Y)$ be filtered spaces.
A \emph{stratified map} $f: (X,\Sigma_X) \to (Y,\Sigma_Y)$
is a continuous map $f:X\to Y$ such that $f(\Sigma_X) \subset \Sigma_Y$
and $f(X-\Sigma_X) \subset Y-\Sigma_Y$. Such a map is called
\emph{codimension-preserving} if
$\codim \Sigma_X = \codim \Sigma_Y.$
\end{defn}
Let $I = [0,1]$ denote the compact unit interval.
The \emph{cylinder} on a filtered space $(X,\Sigma_X)$ is the filtered
space $(X\times I, \Sigma_X \times I)$, i.e.
$\Sigma_{X\times I} = \Sigma_X \times I$.
We shall also write $(X,\Sigma_X)\times I$ for the pair
$(X\times I, \Sigma_X \times I).$
If $n$ is the formal dimension of $X$, then the formal dimension of $X\times I$
is defined to be $n+1$ and we set $\codim (\Sigma_X \times I) := \codim \Sigma_X$.
\begin{defn}
A \emph{stratified homotopy} is a stratified map 
$H: (X\times I, \Sigma_X \times I) \to (Y, \Sigma_Y)$.
\end{defn}
Thus a stratified homotopy maps
$H(\Sigma_X \times I) \subset \Sigma_Y$
and $H((X-\Sigma_X) \times I) \subset Y-\Sigma_Y$; in particular,
the tracks of the homotopy remain within the stratum where they start.
\begin{defn}
Stratified maps $f,g: (X,\Sigma_X) \to (Y,\Sigma_Y)$ are
\emph{stratified homotopic} if there exists a stratified homotopy
$H: (X\times I, \Sigma_X \times I) \to (Y, \Sigma_Y)$ between $f$ and $g$, i.e.
$H_0 =f$ and $H_1 =g$.
\end{defn}
Note that if $f$ (or $g$) is in addition codimension-preserving, then
a stratified homotopy $H$ between $f$ and $g$ is automatically codimension-preserving
as well.

\begin{defn}
A stratified codimension-preserving map 
$f: (X,\Sigma_X) \to (Y,\Sigma_Y)$ is called a
\emph{stratified homotopy equivalence}, if there exists a
stratified (and then necessarily codimension-preserving) map 
$g: (Y,\Sigma_Y) \to (X,\Sigma_X)$ such that $g\circ f$ and $f\circ g$
are stratified homotopic to the identity.
\end{defn}
It is easy to show that singular intersection homology is invariant
under stratified homotopy equivalences, see e.g.
Friedman \cite[Prop. 4.1.10, p. 135; Cor. 6.3.8, p. 271]{friedman}.
\begin{prop} \label{prop.ihstratheinv}
A stratified homotopy equivalence $f: (X,\Sigma_X) \to (Y,\Sigma_Y)$ 
of filtered spaces
induces isomorphisms $f_*: IH^{\bar{p}}_i (X) \cong IH^{\bar{p}}_i (Y)$ for all
$i$ and all $\bar{p}$.
\end{prop}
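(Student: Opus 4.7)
The plan is to model the proof on the classical prism argument for homotopy invariance of singular homology, with two stratification-sensitive adjustments: verify that a stratified map carries intersection chains to intersection chains, and verify that the classical prism operator, applied to a stratified homotopy, preserves $\bar p$-allowability. Everything else is formal.

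Step (i): I would first show that a stratified codimension-preserving map $f:(X,\Sigma_X)\to (Y,\Sigma_Y)$ induces a chain map $f_\#:IC_*^{\bar p}(X)\to IC_*^{\bar p}(Y)$. In our two-stratum setting the conditions $f(\Sigma_X)\subseteq\Sigma_Y$ and $f(X-\Sigma_X)\subseteq Y-\Sigma_Y$ give $f^{-1}(\Sigma_Y)=\Sigma_X$, so for an allowable $\sigma:\Delta^i\to X$ and any stratum $T\subseteq\Sigma_Y$,
\[ (f\circ\sigma)^{-1}(T)=\sigma^{-1}(f^{-1}(T))\subseteq\sigma^{-1}(\Sigma_X)=\bigcup_{S\subseteq\Sigma_X}\sigma^{-1}(S). \]
Each $\sigma^{-1}(S)$ lies in the $(i-k+\bar p(k))$-skeleton of $\Delta^i$, where $k=\codim\Sigma_X=\codim\Sigma_Y$ by the codimension-preserving hypothesis, so $f\circ\sigma$ is allowable. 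Since $\partial$ is unaltered, $f_\#$ commutes with $\partial$ tautologically.

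Step (ii): Given a stratified homotopy $H:(X\times I,\Sigma_X\times I)\to (Y,\Sigma_Y)$ between $f$ and $g$, I would invoke the classical prism operator $P_i:C_i(X)\to C_{i+1}(Y)$ built from the standard triangulation of $\Delta^i\times I$ into $(i+1)$-simplices $\tau_j:\Delta^{i+1}\to\Delta^i\times I$, setting $P_i(\sigma)=\sum_j(-1)^j\,H\circ(\sigma\times\id_I)\circ\tau_j$, which classically satisfies $\partial P+P\partial=g_\#-f_\#$. The crux is to check that each summand is $\bar p$-allowable. For a stratum $T$ of $Y$, $H^{-1}(T)\subseteq\Sigma_X\times I$, hence
\[ (\sigma\times\id_I)^{-1}(H^{-1}(T))\subseteq\sigma^{-1}(\Sigma_X)\times I, \]
which by allowability of $\sigma$ lies in the product of the $(i-k+\bar p(k))$-skeleton of $\Delta^i$ with $I$. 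In the standard prism triangulation this product is contained in the $((i-k+\bar p(k))+1)$-skeleton of $\Delta^i\times I$, so after pulling back via the linear $\tau_j$ one lands in the $((i+1)-k+\bar p(k))$-skeleton of $\Delta^{i+1}$, which is exactly the allowability bound for an $(i+1)$-chain with respect to a codimension-$k$ stratum. Thus $P$ restricts to a chain homotopy $IC_*^{\bar p}(X)\to IC_{*+1}^{\bar p}(Y)$, and $f_\#$, $g_\#$ agree on $IH_*^{\bar p}$.

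Step (iii) is then routine: if $g$ is a stratified homotopy inverse to $f$, then $g\circ f\simeq\id_X$ and $f\circ g\simeq\id_Y$ stratified, so by steps (i) and (ii) we have $g_*f_*=\id$ and $f_*g_*=\id$ on intersection homology, making $f_*$ the desired isomorphism for all $i$ and all $\bar p$. The principal obstacle is the dimension bookkeeping in step (ii); what makes it work is the precise match between the gain of one dimension in passing from $\Delta^i$ to $\Delta^{i+1}$ and the gain of one dimension in crossing the $(i-k+\bar p(k))$-skeleton with $I$, together with the fact that codimensions are preserved so that the allowability threshold $\bar p(k)$ is the same on both sides of $H$.
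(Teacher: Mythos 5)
Your argument is correct, and it is essentially the standard one. The paper does not supply its own proof but cites Friedman (Prop.\ 4.1.10 and Cor.\ 6.3.8 of \emph{Singular Intersection Homology}), and the proof there proceeds exactly along the lines you lay out: one checks that a stratified (here codimension-preserving) map carries $\bar p$-allowable chains to $\bar p$-allowable chains, and that the classical prism operator, applied to a stratified homotopy, respects allowability because the preimage under an affine prism simplex $\tau_j$ of $F\times I$, for $F$ a $d$-face of $\Delta^i$, is a face of $\Delta^{i+1}$ of dimension at most $d+1$, so the allowability threshold rises by exactly one as $i$ does.

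Two small points worth making explicit, though you have implicitly handled both. First, allowability of $P\xi$ alone is not enough: one also needs $\partial P\xi$ allowable, but this follows from $\partial P\xi = g_\#\xi - f_\#\xi - P\partial\xi$ together with allowability of $g_\#\xi$, $f_\#\xi$, and $P\partial\xi$, the last since $\partial\xi$ is allowable by definition of $IC^{\bar p}_*$. Second, the identity $f^{-1}(\Sigma_Y)=\Sigma_X$ and the uniformity of codimension across singular strata are special to the two-stratum setting of equation (\ref{equ.twostrataspaces}), so your appeal to a single codimension $k$ is justified in the paper's context; in the general multi-stratum situation one would need a stratum-by-stratum version of the same bookkeeping with a hypothesis matching codimensions of corresponding strata.
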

Taking filtered spaces as objects and stratified homotopy equivalences
as morphisms yields a category:
\begin{lemma} \label{lem.compstrathes}
The composition of stratified homotopy equivalences is a stratified 
homotopy equivalence.
\end{lemma}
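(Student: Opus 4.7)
The plan is to imitate the classical proof that a composition of homotopy equivalences is a homotopy equivalence, while verifying at every step that the stratification and formal codimensions are respected. Let
\[ f: (X,\Sigma_X) \to (Y,\Sigma_Y), \qquad g: (Y,\Sigma_Y) \to (Z,\Sigma_Z) \]
be stratified homotopy equivalences, with stratified (codimension-preserving) homotopy inverses $f': (Y,\Sigma_Y) \to (X,\Sigma_X)$ and $g': (Z,\Sigma_Z) \to (Y,\Sigma_Y)$. I claim $f' \circ g'$ is a stratified homotopy inverse of $g \circ f$.

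First I would observe that $g \circ f$ is itself stratified and codimension-preserving: $(g \circ f)(\Sigma_X) \subset g(\Sigma_Y) \subset \Sigma_Z$, similarly for the regular part, and the codimensions of $\Sigma_X, \Sigma_Y, \Sigma_Z$ all agree by hypothesis. Likewise $f' \circ g'$ is stratified and codimension-preserving.

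Next I would record two auxiliary facts that isolate the only stratification-theoretic content of the argument.
(a) \emph{Functoriality of stratified homotopy under pre- and post-composition:} if $H: (X \times I, \Sigma_X \times I) \to (Y, \Sigma_Y)$ is a stratified homotopy from $h_0$ to $h_1$ and $u: (Y,\Sigma_Y) \to (W,\Sigma_W)$, $v: (V,\Sigma_V) \to (X,\Sigma_X)$ are stratified, then $u \circ H \circ (v \times \id_I)$ is a stratified homotopy from $u \circ h_0 \circ v$ to $u \circ h_1 \circ v$. The only thing to check is that $v \times \id_I$ respects the product filtration, which is immediate from $(v \times \id_I)(\Sigma_V \times I) \subset \Sigma_X \times I$ and the analogous statement on the regular parts.
(b) \emph{Concatenation of stratified homotopies is a stratified homotopy:} given stratified homotopies $H_1$ from $a$ to $b$ and $H_2$ from $b$ to $c$ between maps $(X,\Sigma_X) \to (Y,\Sigma_Y)$, the reparameterized concatenation
\[ H(x,t) = \begin{cases} H_1(x, 2t), & 0 \leq t \leq 1/2, \\ H_2(x, 2t-1), & 1/2 \leq t \leq 1, \end{cases} \]
is continuous by the pasting lemma, agrees on the overlap because $H_1(\cdot,1) = b = H_2(\cdot,0)$, and each piece individually carries $\Sigma_X \times I$ into $\Sigma_Y$ and $(X - \Sigma_X) \times I$ into $Y - \Sigma_Y$.

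With (a) and (b) in hand the verification is mechanical. Let $H: (Y \times I, \Sigma_Y \times I) \to (Y, \Sigma_Y)$ be a stratified homotopy from $g' \circ g$ to $\id_Y$, and let $K: (X \times I, \Sigma_X \times I) \to (X, \Sigma_X)$ be a stratified homotopy from $f' \circ f$ to $\id_X$. By (a), $f' \circ H \circ (f \times \id_I)$ is a stratified homotopy from $f' \circ g' \circ g \circ f$ to $f' \circ f$; by (b), its concatenation with $K$ is a stratified homotopy from $(f' \circ g') \circ (g \circ f)$ to $\id_X$. Exchanging the roles of the pairs $(f,f')$ and $(g,g')$ yields a stratified homotopy from $(g \circ f) \circ (f' \circ g')$ to $\id_Z$ in the same way. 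The main, and only, obstacle is the small bookkeeping in (a) and (b) guaranteeing that the product filtration and the reparameterization at $t = 1/2$ preserve the condition that tracks of the homotopy remain within the stratum where they start.
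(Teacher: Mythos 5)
Your proof is correct and takes essentially the same approach as the paper: build the homotopy $f' \circ H \circ (f \times \id_I)$ (what the paper calls $F$), concatenate with the homotopy from $f'\circ f$ to $\id_X$, and verify at each step that the stratification is preserved, then invoke symmetry for the other direction. The only difference is cosmetic — you factor out the pre/post-composition and concatenation compatibility as explicit auxiliary facts (a) and (b), whereas the paper carries out those verifications inline.
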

\begin{proof}
Let $f:(X,\Sigma_X) \to (Y,\Sigma_Y)$ and
$f': (Y,\Sigma_Y) \to (Z,\Sigma_Z)$ be stratified homotopy
equivalences with stratified homotopy inverses
$g:(Y,\Sigma_Y) \to (X,\Sigma_X)$ and
$g': (Z,\Sigma_Z) \to (Y,\Sigma_Y)$.
Since $f$ and $f'$ are codimension-preserving, the formal codimensions
of the singular sets coincide,
\[ \codim_X \Sigma_X = \codim_Y \Sigma_Y = \codim_Z \Sigma_Z. \]
Let
$H: (X,\Sigma_X)\times I \to (X,\Sigma_X)$ be a
stratified homotopy between
$H_0 = g \circ f$ and $H_1 = \id_X$,
let
$H': (Y,\Sigma_Y)\times I \to (Y,\Sigma_Y)$ be a
stratified homotopy between
$H'_0 = g' \circ f'$ and $H'_1 = \id_Y$.
As $H,H'$ are stratified, the singular sets and their complements
are mapped compatibly,
\[ H(\Sigma_X \times I)\subset \Sigma_X,~
   H((X-\Sigma_X) \times I)\subset X-\Sigma_X, \]
\[ H'(\Sigma_Y \times I)\subset \Sigma_Y,~
   H'((Y-\Sigma_Y) \times I)\subset Y-\Sigma_Y. \]   
Recall that the \emph{concatenation} $F*F'$ of two homotopies
$F,F': A\times I \to B$ 
such that $F_1 = F'_0$
is the homotopy $F*F': A\times I \to B$
given by
\[ (F*F')(a,t) = \begin{cases}
F(a,2t),& t\in [0,\smlhf], \\
F'(a, 2t-1),& t\in [\smlhf, 1].
\end{cases} \]
Note that if $A' \subset A$, $B' \subset B$ are subspaces, then
$(F*F') (A' \times I) \subset B'$ if and only if
$F(A' \times I)\subset B'$ and $F' (A' \times I)\subset B'$.
Now let $F: X\times I \to X$ be the homotopy given by the composition
\[ X\times I \stackrel{f\times \id}{\longrightarrow} Y\times I
  \stackrel{H'}{\longrightarrow} Y
  \stackrel{g}{\longrightarrow} X. \]
Since
\[ F_1 = g \circ H'_1 \circ (f\times 1) 
   = g \circ  f = H_0, \]
we may concatenate to a homotopy
$G := F*H: X\times I \longrightarrow X.$
This is then a homotopy between $G_1 = H_1 = \id_X$ and
\[ G_0 = F_0 = g \circ H'_0 \circ (f\times 0) =
   gg'f'f. \]
It satisfies $G(\Sigma_X \times I) \subset \Sigma_X,$ since
\[ F(\Sigma_X \times I) =
  g(H'((f\times \id)(\Sigma_X \times I)))
    \subset g(H'(\Sigma_Y \times I)) \subset g(\Sigma_Y)
      \subset \Sigma_X \]
and $H(\Sigma_X \times I) \subset \Sigma_X$.
For the complements,
\begin{align*}
F((X-\Sigma_X) \times I) 
&= g(H'((f\times \id)((X-\Sigma_X) \times I)))
    \subset g(H'((Y-\Sigma_Y) \times I)) \\
&\subset g(Y-\Sigma_Y) 
      \subset X-\Sigma_X 
\end{align*}      
and $H((X-\Sigma_X) \times I) \subset X-\Sigma_X$.      
Consequently, $G((X-\Sigma_X)\times I)\subset X-\Sigma_X$
and $G$ is a stratified homotopy between
$(gg')\circ (f'f)$ and $\id_X$.
Note that $f'f$ and $gg'$ are codimension-preserving.
By symmetry, a stratified homotopy
$G':(Z,\Sigma_Z)\times I \to (Z,\Sigma_Z)$ between 
$(f'f) \circ (gg')$ and
$\id_Z$ is obtained by the same method applied
to stratified homotopies
$K: (Y,\Sigma_Y)\times I \to (Y,\Sigma_Y)$ between
$K_0 = f \circ g$ and $K_1 = \id_Y$, and
$K': (Z,\Sigma_Z)\times I \to (Z,\Sigma_Z)$ between
$K'_0 = f' \circ g'$ and $K'_1 = \id_Z$.
\end{proof}

While singular intersection homology is not directly amenable to algorithmic computation,
a simplicial version of intersection homology is. It must then be clarified under which
conditions these versions are isomorphic. We will review this briefly and ask the
reader to consult e.g. Friedman's monograph \cite{friedman} for details.

Suppose that the filtered space $X$ can be triangulated such that the filtration
subspaces are triangulated by simplicial subcomplexes.
Let $T: |K| \cong X$ be a choice of such a triangulation. The closed subspaces
$X_j \subset X$ are given by $X_j = T(|K_j|)$ for subcomplexes $K_j \subset K$.
One defines the simplicial intersection chain complex
$IC^{\bar{p}, T}_* (X)$ as a subcomplex of the 
simplicial chain complex of $K$, using $T$ and simplices of $K$ that are $\bar{p}$-allowable
with respect to the filtration $\{ K_j \}$. The homology of $IC^{\bar{p}, T}_* (X)$
is the \emph{simplicial intersection homology} $IH^{\bar{p}, T}_* (X)$ associated to
the triangulation $T$.
This is algorithmically computable,
see Section \ref{sec.implementation} and \cite{bendichharer}.

The dependence on choices of triangulations can be controlled as follows:
A piecewise-linear (PL) space is a second-countable Hausdorff space $X$ together with a collection
$\Ta$ of locally finite triangulations, closed under simplicial
subdivision, and such that any two triangulations in $\Ta$ have a common subdivision.
Suppose that the PL space $X$ is \emph{PL filtered}, that is, equipped with a filtration by
closed subsets $X_j$ that are of the form $T(|K_j|)$ for some triangulation $T$ in $\Ta$.
(One then calls $T$ \emph{compatible} with the filtration.)
Taking the direct limit of the simplicial groups $IH^{\bar{p}, T}_* (X)$ over the
directed set $\Ta$ defines the \emph{PL intersection homology}
$IH^{\bar{p}, \PL}_* (X)$ of a PL filtered space $X$.

When does a particular simplicial intersection homology group compute PL intersection homology?
A simplicial subcomplex $L\subset K$ is called \emph{full} if membership of simplices in $L$
can be recognized on the vertex level, i.e.: whenever a simplex $s\in K$ has all of its
vertices in $L$, then $s$ itself must be in $L$.
A compatible triangulation $T\in \Ta$ of a PL filtered space $X$ is said to be \emph{full}
if $T$ triangulates the $X_j$ as full subcomplexes.
Using finitely many subdivisions of a compatible triangulation, one sees that a PL
filtered space always possesses a full triangulation.
The condition of fullness can of course be checked algorithmically.
\begin{prop} \label{prop.simplihcomputesplih}
(Goresky, MacPherson \cite[Appendix]{mv}; see also \cite[Thm. 3.3.20, p. 119]{friedman})
If $T$ is a full (compatible) triangulation of a PL filtered space $X$, then
the canonical map $IH^{\bar{p}, T}_* (X) \to IH^{\bar{p}, \PL}_* (X)$
is an isomorphism.
\end{prop}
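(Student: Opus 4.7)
The plan is to reduce to invariance under subdivision. By construction, $IH^{\bar p, \PL}_*(X)$ is the direct limit $\colim_{T'} IH^{\bar p, T'}_*(X)$, and since any two compatible triangulations admit a common refinement, the subsystem of iterated subdivisions of our fixed full $T$ is cofinal. It therefore suffices to show that for every compatible refinement $T' \succeq T$, the subdivision-induced map
\[ \operatorname{sd}_* \colon IH^{\bar p, T}_*(X) \longrightarrow IH^{\bar p, T'}_*(X) \]
is an isomorphism.

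First I would verify that the ordinary subdivision operator $\operatorname{sd}$ restricts to a chain map between intersection chain complexes. If $\sigma$ is a $T$-simplex that is $\bar p$-allowable with respect to the filtration $\{K_j\}$, and $\sigma'$ is a $T'$-simplex contained in $\sigma$, then $\sigma' \cap |K_j| \subset \sigma \cap |K_j|$, and counting codimensions of faces shows that $\sigma'$ is again $\bar p$-allowable; then $\partial$ of an allowable chain stays allowable. This gives a well-defined directed system of intersection chain complexes indexed by refinement.

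Next I would construct a chain inverse $\rho \colon C^{T'}_*(X) \to C^T_*(X)$ as a simplicial approximation: choose, for each $T'$-vertex $v$, a $T$-vertex $\varphi(v)$ belonging to the smallest $T$-simplex containing $v$, subject to the stratification constraint that $\varphi$ sends vertices lying in $|K_j|$ to $T$-vertices in $|K_j|$. This is possible exactly because $T$ is full: a vertex $v$ lying in a $T$-simplex $\sigma$ that is not contained in $|K_j|$ cannot lie in $|K_j|$ unless some proper face of $\sigma$ lies in $|K_j|$, whence fullness guarantees vertices of that face inside $|K_j|$. Extending linearly, $\varphi$ induces a simplicial map whose chain map $\rho$ sends each $T'$-simplex $\tau$ either to a $T$-simplex of the same dimension or to zero. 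The key point is that $\rho$ preserves $\bar p$-allowability: a $T'$-simplex meeting $|K_j|$ in a face $\phi$ has all vertices of $\phi$ in $|K_j|$, so by fullness of $T$ their images span a $T$-face lying in $|K_j|$, and the dimensional bound needed for allowability is inherited from that of $\tau$.

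Finally I would produce a chain homotopy $D \colon C^{T'}_* \to C^{T'}_{*+1}$ with $\partial D + D \partial = \operatorname{sd} \circ \rho - \id$ via the standard prism/cone construction on the straight-line homotopy between the inclusion $T' \hookrightarrow T'$ and $\operatorname{sd} \circ \rho$; since this homotopy stays within each $T$-simplex and respects $|K_j|$ (again by fullness), $D$ maps intersection chains to intersection chains. Combined with the obvious identity $\rho \circ \operatorname{sd} = \id$ on $C^T_*$, this exhibits $\operatorname{sd}_*$ as an isomorphism on intersection homology, completing the argument.

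The main obstacle is the verification in the third paragraph that the simplicial approximation $\rho$ preserves $\bar p$-allowability. Without fullness one can easily cook up a $T'$-simplex $\tau$ whose vertex-level projection $\rho(\tau)$ has a far larger intersection with $|K_j|$ than $\tau$ itself, breaking the codimension bound; fullness is exactly the combinatorial hypothesis that rules this pathology out and makes the simplicial approximation argument go through on intersection chains.
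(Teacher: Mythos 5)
The paper does not prove this proposition; it cites it from the appendix of MacPherson--Vilonen \cite{mv} and from Friedman's book \cite{friedman}, so there is no internal argument of the paper to compare yours against. Judged on its own, your route --- reduce to subdivision invariance by cofinality, show $\operatorname{sd}$ and a stratum-compatible simplicial approximation $\rho$ restrict to intersection chain complexes, then close the loop with a chain homotopy --- is a sensible chain-level reconstruction, and the closing paragraph correctly locates the conceptual role of fullness.

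Two points, however, need repair. First, the constraint you impose on $\varphi$, namely ``$v\in|K_j|\Rightarrow\varphi(v)\in|K_j|$,'' is automatic and does not use fullness at all: if $v\in|K_j|$ then, since $|K_j|$ is triangulated by a subcomplex of $T$, the entire carrier of $v$ already lies in $|K_j|$, so \emph{every} candidate vertex of the carrier qualifies. What you actually need, and what fullness genuinely buys, is the converse constraint ``$v\notin|K_j|\Rightarrow\varphi(v)\notin|K_j|$'': fullness of $T$ guarantees that a carrier not contained in $|K_j|$ has some vertex outside $|K_j|$, and that is the vertex one must choose. Without imposing this second constraint, $\rho(\tau)$ can acquire extra vertices in the skeleta beyond the images of the vertices of $\tau\cap|K_j|$, and the dimension estimate you invoke for $\bar p$-allowability does not follow; your final paragraph describes precisely this pathology, but the construction in your third paragraph does not actually rule it out. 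Second, the chain homotopy $D$ is the genuine technical core of the statement and is only gestured at. The straight-line homotopy between $|\varphi|$ and $\id$ is a topological homotopy; promoting it to a \emph{simplicial} chain homotopy $D\colon C^{T'}_*\to C^{T'}_{*+1}$ with $\partial D+D\partial=\operatorname{sd}\circ\rho-\id$ requires a simplicial triangulation of the cylinder $|T'|\times I$ compatible with $T'$ at both ends, followed by a verification that the resulting prism chains satisfy $\bar p$-allowability with respect to the product filtration $\{|K_j|\times I\}$ --- none of which is carried out, and this is where most of the work in a chain-level proof actually lives.
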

It remains to relate PL intersection homology to singular intersection homology.
For a compact PL space $L$, let
$c^\circ L$ denote the open cone $([0,1)\times L)/(0\times L)$.
A PL filtered space $X$ is called a \emph{PL CS set} if it is locally cone-like and 
all filtration differences $X_j - X_{j-1}$ are $j$-dimensional PL manifolds.
Points $x\in X_j - X_{j-1}$ must thus have open neighborhoods that are PL
homeomorphic to $\real^j \times c^\circ L$ under a filtration preserving
homeomorphism, where $L$ is some compact PL filtered space.
Note that this concept creates in particular a logical connection
between the topological manifold dimension of strata and the formal
dimension of strata in the filtered space. 
Given a finite dimensional PL space $X$, there always exists
a PL filtration of $X$ such that $X$ becomes a PL CS set with respect to
this filtration. A given PL filtration may well not satisfy the PL CS
condition.
\begin{example}
Let $X=|K|$ be the polyhedron of the simplicial complex $K$ generated
by the $1$-simplices $\{ v_0, v_1 \},$
$\{ v_0, v_2 \}$ and $\{ v_0, v_3 \}.$
Let $\Sigma_X \subset X$ be the closed PL subspace given by
the polyhedron of the complex generated by $\{ v_0, v_1 \},$
$\{ v_0, v_2 \}$. Then $\Sigma_X$ is PL homeomorphic to a compact
interval and hence a $1$-dimensional PL manifold (with boundary).
The pair $(X,\Sigma_X)$ does not yet constitute a PL filtered space because
no formal dimensions have been assigned.
Can we make such an assigment in a way that will make the resulting
PL filtered space into a PL CS set?
If so, then we need to assign formal dimension $1$ to $\Sigma_X$, i.e.
we are forced to set $X_1 := \Sigma_X$. Since $\Sigma_X$ is a proper
subset of $X$, the formal dimension of $X$ would have to be at least $2$
(and thus would not agree with the polyhedral dimension of $X$).
If such a filtration made $(X,\Sigma_X)$ into a PL CS set, then
the point $v_0 \in \Sigma_X$ would have a neighborhood PL homeomorphic
to a space of the form $\real^1 \times c^\circ L$ for some compact 
PL space $L$. No such $L$ exists. (For dimensional reasons, $L$ would
have to be empty, but the neighborhood of $v_0$ is not homeomorphic
to $\real^1$.) We conclude that the PL filtered space $(X,\Sigma_X)$
is not a PL CS set. If instead one took
$\Sigma_X = X_0 = \{ v_0, \ldots, v_3 \}$ and $X_1 = X$, then
$(X,\Sigma_X)$ would be a PL CS set. 
\end{example} 

\begin{prop} \label{prop.plihcomputessingih}
(Friedman \cite[Thm. 5.4.2, p. 229]{friedman}.)
If $X$ is a PL CS set, then there is an isomorphism
$IH^{\bar{p}, \PL}_* (X) \cong IH^{\bar{p}}_* (X)$
between PL and singular intersection homology groups.
\end{prop}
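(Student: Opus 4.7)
The plan is to construct a natural chain map from PL to singular intersection chains and then to prove it is a quasi-isomorphism by a Mayer-Vietoris induction that exploits the local cone structure of a PL CS set. Fix a compatible triangulation $T\in \Ta$ of $X$. Each oriented simplex of $K$ has a characteristic map $\Delta^i \to |K|\cong X$, and so defines a singular simplex in $X$. I would first verify that this assignment preserves $\bar{p}$-allowability: the preimage of $X_j = T(|K_j|)$ in $\Delta^i$ under the characteristic map of a simplicial simplex $s$ is exactly the union of those open faces of $\Delta^i$ whose vertices are mapped into $K_j$, so the simplicial and singular allowability conditions with respect to $\bar{p}$ and the filtration translate into each other directly. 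Linear extension yields a chain map $IC^{\bar{p},T}_*(X)\to IC^{\bar{p}}_*(X)$, and since it is compatible with simplicial subdivision (up to chain homotopy at the singular level), passing to the direct limit over $\Ta$ produces a natural map $\Phi\colon IH^{\bar{p},\PL}_*(X)\to IH^{\bar{p}}_*(X)$.

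To prove $\Phi$ is an isomorphism, I would induct on the depth, and within a given depth on the formal dimension, of the PL CS set. A PL CS set admits a basis of PL open sets that are filtration-preservingly PL homeomorphic to distinguished neighborhoods of the form $\real^j\times c^\circ L$ with $L$ a compact PL CS set of lower depth. The induction step splits into three standard pieces:
\begin{enumerate}
\item[(a)] \textbf{Mayer-Vietoris.} Both theories satisfy Mayer-Vietoris for open covers, and $\Phi$ commutes with the connecting homomorphisms; this reduces the statement to distinguished neighborhoods.
\item[(b)] \textbf{Local model.} One combines stratified homotopy invariance (Proposition \ref{prop.ihstratheinv}) to kill the $\real^j$ factor with the cone formula, which in both the PL and singular frameworks expresses $IH^{\bar{p}}_i(c^\circ L)$ as either $IH^{\bar{p}}_i(L)$ or $0$ according as $i<\codim(\text{cone point})-\bar{p}(\codim(\text{cone point}))-1$, or not. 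The map $\Phi$ is shown chain-theoretically to intertwine the two cone formulas, and the induction hypothesis applied to $L$ finishes the local case.
\item[(c)] \textbf{Globalization.} Exhaust $X$ by open subsets admitting finite covers by distinguished neighborhoods, apply the Mayer-Vietoris comparison finitely many times, and then take a colimit.
\end{enumerate}

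The main obstacle is the chain-level subdivision result needed to establish (a): given an open cover $\{U,V\}$ of $X$, every singular $\bar{p}$-allowable chain must be chain homotopic, \emph{within} $IC^{\bar{p}}_*(X)$, to a sum of chains supported in $U$ and in $V$. Ordinary barycentric subdivision does not automatically preserve allowability, because a singular simplex transverse to the strata in a non-generic way can subdivide into pieces that violate the $\bar{p}$-dimension bound on preimages of strata. The remedy is to use a stratum-respecting subdivision operator combined with a stratified Lebesgue-type argument: one subdivides only in a way compatible with the filtration of $X$, controlling how the preimages of strata meet the faces of each new simplex. Once this operator is constructed and shown to be chain homotopic to the identity through allowable operators, the Mayer-Vietoris sequence follows, the local computation in (b) is a straightforward consequence of the classical cone formula, and the induction in (c) closes the argument, yielding $\Phi$ as the desired natural isomorphism $IH^{\bar{p},\PL}_*(X)\cong IH^{\bar{p}}_*(X)$.
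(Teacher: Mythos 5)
The paper does not prove this proposition; it is cited verbatim from Friedman's monograph (Theorem~5.4.2), so there is no in-paper proof to compare against. Your sketch does follow the standard route taken in that reference: build a natural map from PL to singular intersection chains, then run a Mayer--Vietoris induction over depth using distinguished neighborhoods $\real^j\times c^\circ L$ and the cone formula. Two technical points in your write-up are worth tightening. First, the claim that $\sigma^{-1}(X_j)$ is exactly the union of faces with all vertices in $K_j$ requires $K_j$ to be a \emph{full} subcomplex of $K$; otherwise a simplex can have all its vertices in $K_j$ without lying in $|K_j|$. This is harmless since full compatible triangulations are cofinal in $\Ta$, but it should be said. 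Second, and more substantively, you assert that barycentric subdivision of an allowable singular simplex can produce non-allowable pieces. That is not where the difficulty lies: because barycentric subdivision respects the face lattice of $\Delta^i$, the intersection of any simplex $\tau$ of the subdivision with the $m$-skeleton of $\Delta^i$ is contained in the $m$-skeleton of $\tau$, so the subdivided pieces (and their boundaries) remain $\bar p$-allowable. The genuine obstruction in establishing Mayer--Vietoris and excision at the level of $IC^{\bar p}_*$ is the \emph{chain homotopy} between iterated subdivision and the identity --- the usual cone/prism operator built at barycenters need not send allowable chains to allowable chains --- and it is precisely this step that King and Friedman address with a modified subdivision and homotopy construction. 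With that correction your argument matches the standard proof.
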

Together, Propositions \ref{prop.simplihcomputesplih} 
and \ref{prop.plihcomputessingih}
provide sufficient conditions for simplicial intersection homology
to compute singular intersection homology.
The upshot is that every PL space $X$ 
has a PL filtration $\{ X_j \}$ which makes $X$ into a PL CS set,
and a triangulation
whose simplicial intersection homology computes the singular intersection
homology of $(X, \{ X_j \})$.

A particularly important class of singular spaces are pseudomanifolds.
An \emph{$n$-dimensional PL pseudomanifold}
is a polyhedron $X$ for which some
(and hence every) triangulation has the following
property:
Every simplex is the face of some $n$-simplex, and
every $(n-1)$-simplex is a face of exactly two $n$-simplices.
Pseudomanifolds admit a concept of orientability, and if a 
given pseudomanifold is oriented, its intersection homology
will satisfy a generalized form of Poincar\'e duality.

The following small example shows that direct computation of intersection
homology from the Vietoris-Rips complex of a point cloud does not
usually yield the correct intersection homology of an underlying
space near which the points are sampled, even when the points have
been sampled ``well'' in the sense that the location of the
singular set is known and the Vietoris-Rips complex
is homotopy equivalent to the underlying space (and hence its ordinary
homology is correct).
\begin{example} \label{expl.s1vs112pts}
Let $\bar{0}$ denote the zero-perversity whose value is $0$ on every
codimension, and let $\overline{-1}$ the perversity whose value is $-1$
on every positive codimension. We shall write $\intg_2$ for 
$\intg/2\intg$.
The figure eight space $S^1 \vee S^1$, a wedge of two circles,
equipped with the obvious filtration with one singular point in
codimension $1$, has intersection homology
\begin{align*}
IH^{\bar{0}}_{i} (S^1 \vee S^1;\intg_2) &= 
\begin{cases}
\intg_2^2, &\mbox{ if } i= 1\\
\intg_2, &\mbox{ if } i= 0\\
0, &\mbox{ else;}
\end{cases}
\\
IH^{\overline{-1}}_{i} (S^1 \vee S^1;\intg_2) &=
\begin{cases}
\intg_2^2, &\mbox{ if } i= 0\\
0, &\mbox{ else.}
\end{cases}
\\
\end{align*}
Figure \ref{fig.S1vS1VR12pts} shows 12 points near an embedding of
$S^1 \vee S^1$ in the plane $\real^2$, as well as 
an associated Vietoris-Rips complex with polyhedron $X$.
\begin{figure}
	\includegraphics[width=0.9\textwidth]{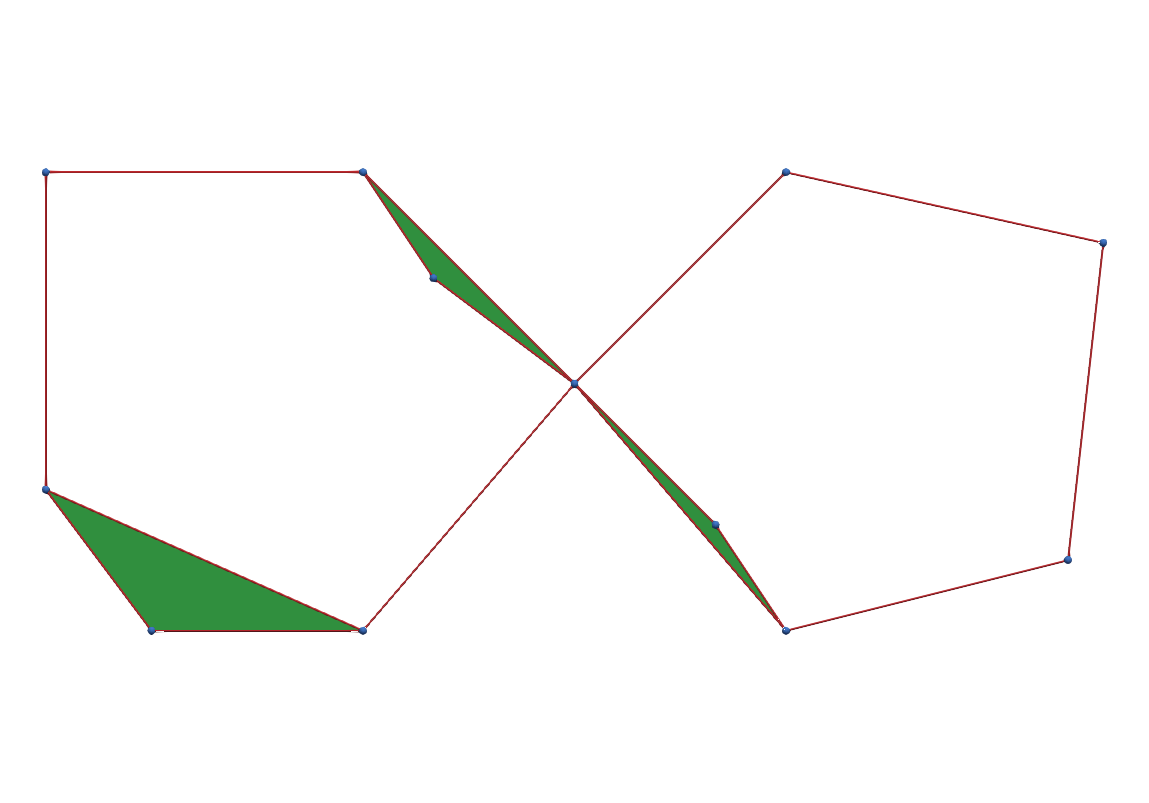}
	\caption{Vietoris-Rips polyhedron $X$ of point cloud near $S^1 \vee S^1$.} 
	\label{fig.S1vS1VR12pts}
\end{figure}
The singular point $s$ of $S^1 \vee S^1$ is one of the 12 points
and is located in the middle of the figure.
The polyhedron $X$ is $2$-dimensional and 
filtered by
$\{ s \} =X_0 \subset X_2 =X$. Its intersection homology groups are
\[
IH^{\bar{0}}_{i} (X;\intg_2) =
IH^{\overline{-1}}_{i} (X;\intg_2) =
\begin{cases}
\intg_2^2, &\mbox{ if } i= 0\\
0, &\mbox{ else,}
\end{cases}
\]
which are not isomorphic to the above groups of $S^1 \vee S^1$.
A spine $X'$ of $X$ is shown in Figure \ref{fig.S1vS1VR12ptsSpine}.
\begin{figure}
	\includegraphics[width=0.9\textwidth]{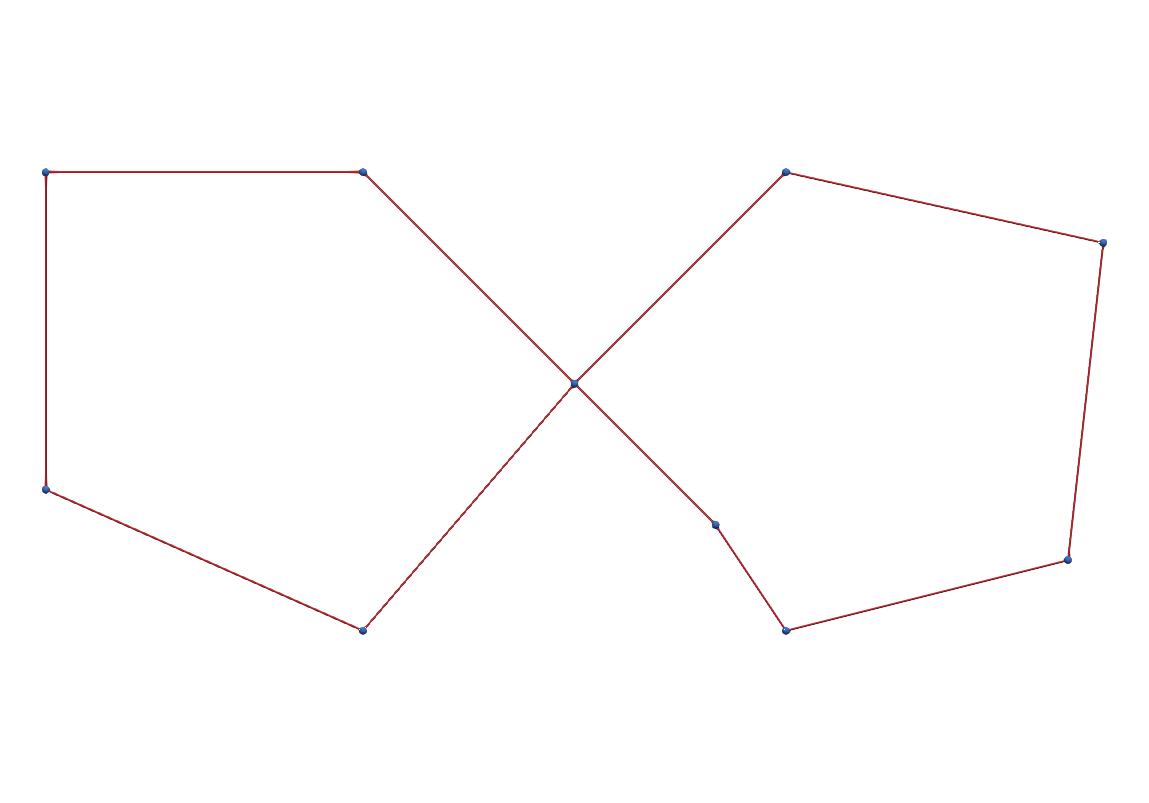}
	\caption{Spine $X'$ of Vietoris-Rips polyhedron $X$.} 
	\label{fig.S1vS1VR12ptsSpine}
\end{figure}
It is $1$-dimensional and filtered by 
$\{ s \} =X'_0 \subset X'_1 =X'$. A computer calculation of its
intersection homology yields
\begin{align*}
IH^{\bar{0}}_{i} (X';\intg_2) &= 
\begin{cases}
\intg_2^2, &\mbox{ if } i= 1\\
\intg_2, &\mbox{ if } i= 0\\
0, &\mbox{ else;}
\end{cases}
\\
IH^{\overline{-1}}_{i} (X';\intg_2) &=
\begin{cases}
\intg_2^2, &\mbox{ if } i= 0\\
0, &\mbox{ else,}
\end{cases}
\\
\end{align*}
which do agree with the intersection homology of $S^1 \vee S^1$.
Of course, one may readily deduce this from the fact that in this
particular example it so happens that the spine is even
stratum preserving homeomorphic to $S^1 \vee S^1$, which obviously does not
always happen. (We will return to this example in 
Section \ref{sec.examples}).

Such examples suggest the idea of using simplicial collapse processes
and spines to explore the intersection homology
of Vietoris-Rips and other simplicial complexes.
However, contrary to ordinary homology, intersection homology is
not generally invariant under arbitrary simplicial collapses,
so a second new idea is needed, to be developed in the next section:
the idea of \emph{stratified} simplicial collapses and expansions.
\end{example}

\section{Stratified Formal Deformations and Stratified Spines}
\label{sec.stratspines}

We define stratified (simplicial) collapses and expansions,
stratified formal deformations and stratified spines.
We begin by observing that if in a simplicial complex 
one looks for pseudomanifolds obtainable by collapses
(i.e. the complex is a thickening of an unknown pseudomanifold), then
one must seek these pseudomanifolds among the spines of that complex.
\begin{lemma} \label{lem.pseudomfdisspine}
Let $K$ be a simplicial complex and $L\subset K$ a subcomplex.
If $K\searrow L$ and $|L|$ is a pseudomanifold, then $L$ is a spine of $K$.
\end{lemma}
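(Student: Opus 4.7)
The plan is to verify the two defining conditions of a spine in turn. Condition (1), that $K \searrow L$, is given by hypothesis, so the only work is to show that $L$ contains no free simplex.

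I would argue by contradiction: suppose $s \in L$ is a free simplex in $L$, and let $p = \Princ_L(s)$. By Lemma \ref{lem.freehascodim1} applied inside the complex $L$, we have $\dim p - \dim s = 1$. The strategy is now to use the pseudomanifold structure on $|L|$ to force $p$ to be top-dimensional, and then to derive a contradiction from the $(n-1)$-simplex condition.

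Let $n = \dim |L|$. Since $|L|$ is an $n$-dimensional PL pseudomanifold and the pseudomanifold condition holds for every triangulation (in particular for $L$ itself), every simplex of $L$ is a face of some $n$-simplex of $L$. Applying this to the principal simplex $p$: $p$ is a face of some $n$-simplex $\sigma$, but principality forces $\sigma = p$, so $\dim p = n$. Consequently $\dim s = n - 1$.

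Now comes the payoff: by the second pseudomanifold condition, the $(n-1)$-simplex $s$ must be a proper face of \emph{exactly two} $n$-simplices of $L$. Freeness of $s$ in $L$, however, demands that $s$ is a proper face of only one simplex of $L$, namely $p$. This is the desired contradiction, so $L$ has no free simplex and is therefore a spine of $K$. The only subtle point I would double-check is that the pseudomanifold property of $|L|$ indeed transfers to the given triangulation $L$; this is immediate from the definition quoted before the lemma ("some (and hence every) triangulation has the following property"), so no real obstacle arises.
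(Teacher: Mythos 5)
Your proof is correct and relies on exactly the same two pseudomanifold conditions, the same codimension-one lemma, and the same contradiction, so it is essentially the paper's argument. The only cosmetic difference is that you first establish $\dim p = n$ and then derive the contradiction, whereas the paper splits into the cases $\dim p = n$ and $\dim p < n$ and contradicts each separately.
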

\begin{proof}
Let $n$ be the dimension of $|L|$.
Suppose that $L$ is not a spine of $K$.
Then $L$ possesses a free simplex $s$. Let $p = \Princ_L (s)$ be the associated
principal simplex. We have $\dim p \leq n$.
If $\dim p=n$, then $\dim s= n-1$ by Lemma \ref{lem.freehascodim1}. 
So $s$ is an $(n-1)$-simplex
which is the face of precisely one $n$-simplex, contradicting the fact that in
an $n$-dimensional pseudomanifold, every $(n-1)$-simplex is the face of precisely
two $n$-simplices. Suppose that $\dim p<n$.
Then $p$, not being the face of any simplex in $L$, is in particular not the face of any
$n$-simplex, contradicting the fact that in an $n$-dimensional pseudomanifold, every simplex
must be the face of some $n$-simplex.
\end{proof}

\begin{defn}
A \emph{layered simplicial complex} is a triple $(K,C,S)$, where 
$K$ is a simplicial complex and $C,S$ are disjoint subcomplexes of $K$.
% such that every vertex of $K$ is a vertex either of $C$ or of $S$.
Simplices of $K$ which are neither in $C$ nor in $S$ are called
\emph{intermediate simplices}. We shall write 
$\IM (K,C,S)$ for the set of intermediate simplices.
\end{defn}
Note that in general $\IM (K,C,S)$ is merely a subset, not a subcomplex, of $K$.

\begin{defn}
A \emph{divided simplicial complex} is a pair $(K,S^0)$, where
$K$ is a simplicial complex and $S^0 \subset K^0$ is a set of vertices of $K$.
\end{defn}

A divided simplicial complex $(K,S^0)$ gives rise to a layered simplicial complex
$(K,C,S)$ such that the set of vertices of $S$ is $S^0$, as follows:
Take $S$ to consist of all simplices of $K$ whose vertices lie in $S^0$.
(Note that this is indeed a subcomplex of $K$.)
Take $C$ to consist of all simplices of $K$ whose vertices lie in the complement $C^0 := K^0 - S^0$.
(Again, this is a subcomplex of $K$.)
We call $(K,C,S)$ the \emph{associated} layered complex of the divided complex $(K,S^0)$.
In the associated layered complex, the intermediate simplices are those simplices of $K$
that have at least one vertex in $S^0$ and one vertex in $K^0 - S^0$. Hence the dimension
of an intermediate simplex in an associated layered complex is at least $1$.

Let $(K,C,S)$ be a layered simplicial complex.
Suppose that 
\[ K =K_0 \searrow K_1 \searrow K_2 \searrow \cdots \searrow K_m = K_S \]
is a finite sequence of elementary collapses, such that 
$K_{i+1} = K_i - \{ s, p \},$
where $p$ is a simplex in $S$ which is principal in $K_i$, and
$s$ is a face of $p$ which is free in $K_i$. Note that since $S$ is a subcomplex,
$s$ is in $S$. Since $C$ and $S$ are disjoint, neither $p$ nor $s$
can be a simplex of $C$. Hence $C$ remains a subcomplex of $K_i$ for every $i$,
and in particular is a subcomplex of $K_S$.
Set 
\begin{equation} \label{equ.sikicaps}
S_i = K_i \cap S. 
\end{equation}
The intersection $A\cap B$ of two subcomplexes $A,B\subset L$ of a simplicial
complex $L$ is a subcomplex of $A$, of $B$, and of $L$.
Hence $S_{i+1}$ is a subcomplex of $S_i$ for every $i$.
The simplices $p$ and $s$ lie in the complex $S_i$.
As $p$ is principal in $K_i$, it is in particular principal in the subcomplex $S_i$.
Furthermore, as $s$ is free in $K_i$, it is also free in the subcomplex $S_i$.
This shows that $S_{i+1}$ is obtained from $S_i$ by an elementary collapse $S_i \searrow S_{i+1}$.
Hence there is a finite sequence of elementary collapses
\[ S =S_0 \searrow S_1 \searrow S_2 \searrow \cdots \searrow S_m =: S'. \]
Since $S'$ is obtained by removing simplices of $S$, the subcomplexes $C$ and $S'$ of
$K_S$ are of course still disjoint and hence $(K_S, C, S')$ is a layered complex.
\begin{defn}
An \emph{$S$-collapse} of the layered simplicial complex $(K,C,S)$ is 
any layered simplicial complex $(K_S, C, S')$ obtained by the collapse process
described above. If $m=1$, we shall refer to $K_0 \searrow K_1$ also as an
\emph{elementary $S$-collapse}. 
\end{defn}
Loosely, an $S$-collapse is thus obtained by collapsing only simplices of the subcomplex $S$.
Since the definition of a layered complex is symmetric in $C$ and $S$,
the notion of a $C$-collapse can be defined by interchanging the role of $C$ and $S$
in the above collapse process. Thus:
\begin{defn}
A \emph{$C$-collapse} of the layered simplicial complex $(K,C,S)$ is 
any layered simplicial complex $(K_C, C', S)$ obtained by the collapse process
described above, where only simplices of $C$ are eligible for elementary collapses.
If $m=1$, we shall speak of an \emph{elementary $C$-collapse}. 
\end{defn}
Now, as $C$ has been left intact in any $S$-collapse $(K_S, C, S')$, 
\[ K =K_0 \searrow K_1 \searrow K_2 \searrow \cdots \searrow K_m = K_S, \]
\[ K_{i+1} = K_i - \{ s_i, p_i \}, \]
we may
execute a $C$-collapse 
\[ K_S =(K_S)_0 \searrow (K_S)_1 \searrow (K_S)_2 \searrow \cdots \searrow (K_S)_r = (K_S)_C, \]
\[ (K_S)_{i+1} = (K_S)_i - \{ s'_i, p'_i \}, \]
on $(K_S, C, S')$, yielding a layered simplicial complex
$((K_S)_C, C', S')$. 
Is $p'_0$ principal in $K$?
The simplex $p'_0$ lies in $C$ and is principal in 
\[ K_S = K - \{ s_0, p_0, \ldots, s_{m-1}, p_{m-1} \}. \]
So $p'_0$ is not the proper face of any simplex in $K_S$.
Hence, if $p'_0$ were not principal in $K$, then $p'_0$ would have to
be the face of some $p_i$. But this would place $p'_0$ in the complex $S$ which is
impossible as $S$ and $C$ are disjoint. 
We conclude that $p'_0$ is principal even in $K$.
For the same reason, $s'_0$ is free even in $K$:
We know $s'_0$ lies in $C$ and is free in $K_S$.
So $s'_0$ is not the proper face of any simplex in $K_S$ other than $p'_0$.
If $s'_0$ were not free in $K$, then $s'_0$ would be the proper face of some
simplex $t$ in $K$, $t\not= p'_0$. Then $s'_0$ would be the proper face of some $p_i$,
which is again impossible.
Thus the elementary collapse
\[ K = K'_0 \searrow K'_1 := K'_0 - \{ s'_0, p'_0 \} \]
is defined.
Similarly, one then sees by an easy induction argument
that $p'_i$ is principal in $K'_i$ and $s'_i$ is free in $K'_i$, whence we may define
\[ K'_{i+1} = K'_i - \{ s'_i, p'_i \}. \]
We thus obtain a $C$-collapse $(K_C, C'', S)$,
\[ K =K'_0 \searrow K'_1 \searrow K'_2 \searrow \cdots \searrow K'_r = K_C. \]

It is immediately clear that $p_0$ is principal in $K_C$, since it is so even in $K$.
For the same reason, $s_0$ is free in $K_C$.
Thus the elementary collapse
\[ K_C = (K_C)_0 \searrow (K_C)_1 := (K_C)_0 - \{ s_0, p_0 \} \]
is defined.
Similarly, one then sees by an easy induction argument
that $p_i$ is principal in $(K_C)_i$ and $s_i$ is free in $(K_C)_i$, whence we may define
\[ (K_C)_{i+1} = (K_C)_i - \{ s_i, p_i \}. \]
We thus obtain an $S$-collapse $((K_C)_S, C'', S'')$,
\[ K_C = (K_C)_0 \searrow (K_C)_1 \searrow (K_C)_2 \searrow \cdots \searrow (K_C)_m = (K_C)_S. \]
Now note that
\[ (K_C)_S = (K_S)_C,~ C'' = C',~ S'' = S'. \]
Thus $S$-collapses and $C$-collapses are commuting operations on layered simplicial 
complexes.

Notably with a view towards algorithm design, it is important to know
that $S$- and $C$-collapses do not affect the set of intermediate
simplices:
\begin{lemma} \label{lem.impresundercscoll}
Let $(K,C,S)$ be the associated layered complex of a divided complex.
If $(K',C,S')$ has been obtained from $(K,C,S)$ by an $S$-collapse, then
\[ \IM (K',C,S') = \IM (K,C,S). \]
If $(K',C',S)$ has been obtained from $(K,C,S)$ by a $C$-collapse, then
\[ \IM (K',C',S) = \IM (K,C,S). \]
\end{lemma}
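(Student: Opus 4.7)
The plan is to treat this as a bookkeeping argument about the disjoint decomposition of the simplex set of $K$. Since $(K,C,S)$ is the associated layered complex of a divided complex $(K,S^0)$, we have a set-theoretic partition
\[ K = C \;\sqcup\; \IM(K,C,S) \;\sqcup\; S, \]
where the three parts are respectively the simplices whose vertex sets lie entirely in $C^0 = K^0 - S^0$, meet both $S^0$ and $C^0$, or lie entirely in $S^0$. Hence proving $\IM(K',C,S') = \IM(K,C,S)$ reduces to tracking which simplices of each part survive the collapse.

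For the first statement, I would first recall from the construction of an $S$-collapse that each removed pair $\{s_i, p_i\}$ lies in $S$ (this is noted explicitly after the definition of $S$-collapse). Consequently no simplex of $\IM(K,C,S)$ and no simplex of $C$ is removed, so in particular $\IM(K,C,S) \subseteq K'$ and $C$ is untouched. For the inclusion $\IM(K,C,S) \subseteq \IM(K',C,S')$, take an intermediate $t$; it is in $K'$ (just observed), not in $C$ (unchanged), and not in $S' = K'\cap S$ since $t \notin S$. For the reverse inclusion, let $t \in \IM(K',C,S')$. Then $t \in K' \subseteq K$ and $t \notin C$, so it suffices to show $t \notin S$. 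But if $t$ were in $S$, then $t \in K' \cap S = S'$, contradicting $t \notin S'$. Hence $t \in \IM(K,C,S)$.

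The second statement follows by the symmetric argument: a $C$-collapse only removes pairs in $C$, so intermediate and $S$-simplices are left untouched, and the identical set-theoretic bookkeeping gives $\IM(K',C',S) = \IM(K,C,S)$. I do not anticipate a genuine obstacle here; the only thing to be careful about is the identity $S' = K' \cap S$ (equation~\eqref{equ.sikicaps} above) and its analogue $C' = K' \cap C$, which are precisely what is needed to turn the single negation ``$t \notin S'$'' into the stronger ``$t \notin S$.'' The divided-complex hypothesis is used only to guarantee the clean tripartition of $K$ into $C$, $\IM$, and $S$; once that partition is in hand, the lemma is a one-line set-theoretic verification in each direction.
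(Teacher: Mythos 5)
Your proof is correct, and it is cleaner than the one in the paper. Both proofs are ultimately set-theoretic bookkeeping over the tripartition $K = C \sqcup \IM(K,C,S) \sqcup S$, but the paper proves the lemma for a single elementary $S$-collapse $K' = K - \{s,p\}$, $S' = S - \{s,p\}$, and for the inclusion $\IM(K,C,S) \subseteq \IM(K',C,S')$ it proceeds by verifying the \emph{vertex-level} characterization of intermediacy: it picks a vertex $v$ of $t$ in $C$ and a vertex $w$ of $t$ in $S$, and then invokes the freeness of $s$ in $K$ (and the positive dimension of $p$) to rule out $w = s$ and $w = p$, thereby concluding $w \in S'$. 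You sidestep this entirely: to show $t \notin S'$ it is enough to note $t \notin S$ and $S' \subseteq S$, with no appeal to freeness. Your systematic use of the identity $S' = K' \cap S$ (equation~\eqref{equ.sikicaps}) makes the two inclusions symmetric one-liners, whereas the paper treats them quite asymmetrically and, for the ``easy'' direction, relies on an argument that only works once one unpacks it using the same identity. What the paper's extra work buys is the mildly stronger observation that the vertex-level criterion for intermediacy is preserved by the collapse, but since the paper's own definition of $\IM$ is purely set-theoretic (``neither in $C$ nor in $S$''), this is not needed for the statement being proved. Your version is the more economical argument.
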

\begin{proof}
By symmetry, it suffices to consider $S$-collapses.
Then $K' = K - \{ s,p \},$ $S' = S - \{ s,p \},$ with
$p\in S,$ $p$ principal in $K$, $s$ a proper face of $p$ and $s$
free in $K$.
The inclusion 
\[ \IM (K',C,S') \subset \IM (K,C,S) \] 
follows from
$K' \subset K$ and $S' \subset S$.
Conversely, suppose that $t\in \IM (K,C,S)$.
Then $t\not= s$ and $t\not= p$, since $t\not\in S$, while $s,p\in S$.
Therefore, $t\in K - \{ s,p \} = K'$.
Since $t$ is intermediate in $(K,C,S)$
and $(K,C,S)$ comes from a divided complex, $t$ has a vertex
$v\in C$ and a vertex $w\in S$. In particular, $t$ has positive dimension.
We claim that $w\in S'$:
As $p$ has positive dimension, $w \not= p$.
If $w$ were equal to $s$, then $s$ would be a proper
face of both $t$ and $p$, with $t\not= p$.
This would contradict the freeness of $s$ in $K$.
We deduce that $w \not= s$, so that $w\in S - \{ s,p \}$.
This proves the claim.
Thus $t$ is a simplex of $K'$ which possesses a vertex $v\in C$
and a vertex $w\in S'$, showing that $t$ is intermediate in
$(K',C,S')$.
\end{proof}

The most interesting aspect of stratified collapses is the treatment
of intermediate simplices.
Let $(K,C,S)$ be a layered simplicial complex and suppose that
\[ K =K_0 \searrow K_1 = K_0 - \{ s, p \} \]
is an elementary collapse, where 
\begin{enumerate}
\item[(i)] $p$ is an intermediate simplex of $(K,C,S)$,
\item[(ii)] $p$ is principal in $K_0$,
\item[(iii)] $s$ is a face of $p$ which is free in $K_0$, and
\item[(iv)] if $t$ is a simplex of $S$ which is a proper face of $p$, then
  $t$ is a proper face of $s$.
\end{enumerate}
Put $C_1 := K_1 \cap C$ and $S_1 := K_1 \cap S$.
Then $C_1$ and $S_1$ are subcomplexes of $K_1$ and they are disjoint.
Consequently, $(K_1, C_1, S_1)$ is a layered simplicial complex.
\begin{lemma} \label{lem.s1iss}
For any layered simplicial complex and simplices $s,p$ as above,
we have $S_1 = S$.
\end{lemma}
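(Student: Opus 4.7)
The plan is to show that removing the pair $\{s, p\}$ from $K_0$ deletes no simplex of $S$, so that intersecting with $S$ is unaffected.

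First I would observe that since $S$ is a subcomplex of $K = K_0$, we have $K_0 \cap S = S$, hence
\[ S_1 = K_1 \cap S = (K_0 - \{s,p\}) \cap S = S - (\{s,p\} \cap S). \]
Thus it suffices to prove that $\{s,p\} \cap S = \varnothing$, i.e.\ neither $s$ nor $p$ lies in $S$.

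For $p$: by hypothesis (i), $p$ is intermediate, so by definition $p \notin C \cup S$; in particular $p\not\in S$. For $s$: suppose for contradiction that $s \in S$. By (iii), $s$ is a proper face of $p$, so $s$ would be a simplex of $S$ which is a proper face of $p$. Then hypothesis (iv) forces $s$ to be a proper face of $s$, which is impossible. Hence $s\not\in S$.

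Combining, $\{s,p\}\cap S = \varnothing$, so $S_1 = S$. The argument is entirely formal; the only ``content'' is the observation that condition (iv) is precisely strong enough to rule out $s\in S$, so there is no real obstacle here beyond bookkeeping.
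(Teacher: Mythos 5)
Your proof is correct and follows essentially the same route as the paper: note $p\notin S$ because $p$ is intermediate, then rule out $s\in S$ by applying condition (iv) with $t=s$ to get the absurd conclusion that $s$ is a proper face of itself. The preliminary rewrite $S_1 = S - (\{s,p\}\cap S)$ is just slightly more explicit bookkeeping than the paper's $S_1 = S - \{s,p\} = S - \{s\}$, but the substance is identical.
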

\begin{proof}
Since $p$ is intermediate in $(K,C,S)$, $p\not\in S$. Hence
$S_1 = S - \{ s, p \} = S - \{ s \}.$
We claim that $s\not\in S$. Suppose $s$ were a simplex in $S$.
Then, as $s$ is a proper face of $p$ (by freeness), we could take
$t=s$ in condition (iv) above and conclude that $t=s$ is a \emph{proper}
face of itself, a contradiction. Hence $s\not\in S$ as claimed. 
\end{proof}
\begin{lemma} \label{lem.c1isc}
If $(K,C,S)$ is the associated layered complex of a divided simplicial
complex $(K,S^0)$, then $C_1 = C$.
\end{lemma}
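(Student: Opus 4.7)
The plan is to show that neither $s$ nor $p$ lies in $C$, so that $C_1 = K_1 \cap C = (K - \{s,p\}) \cap C = C$. For $p$ this is immediate from hypothesis (i), since $p$ is intermediate in $(K,C,S)$, meaning $p \notin C \cup S$. The substantive step is therefore to prove $s \notin C$, and this is where condition (iv) of stratified collapses must be invoked in combination with the hypothesis that $(K,C,S)$ comes from a divided complex $(K,S^0)$.

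The key observation is that $p$, being intermediate in an associated layered complex, must have at least one vertex in $S^0$ (and at least one in $C^0 = K^0 - S^0$); in particular $\dim p \geq 1$. Pick any vertex $v$ of $p$ with $v \in S^0$. Then $\{v\}$ is a $0$-simplex of $K$ whose sole vertex lies in $S^0$, so by definition of the associated layered complex $\{v\} \in S$. Moreover $\{v\}$ is a proper face of $p$ since $\dim p \geq 1$. Condition (iv) now applies and forces $\{v\}$ to be a proper face of $s$, i.e.\ $v$ is a vertex of $s$. Thus $s$ has at least one vertex in $S^0$, so $s \notin C$ (since membership in $C$ requires \emph{all} vertices to lie in $C^0$).

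Combining, $\{s,p\} \cap C = \varnothing$, hence
\[ C_1 = K_1 \cap C = (K - \{s,p\}) \cap C = K \cap C = C. \]

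The main (and only) obstacle is recognizing that the divided-complex hypothesis is precisely what is needed to promote the set-theoretic vertex $v \in S^0$ to a genuine simplex $\{v\} \in S$ to which condition (iv) can be applied; without this, one could not rule out the possibility that $s$ consists entirely of $C^0$-vertices despite $p$ being intermediate. The argument is otherwise a short unwinding of definitions.
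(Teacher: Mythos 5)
Your proposal is correct and follows essentially the same route as the paper: both reduce the claim to showing $s\not\in C$, extract a vertex $v\in S^0$ of the intermediate simplex $p$, promote $\{v\}$ to a simplex of $S$ via the divided-complex hypothesis, and invoke condition (iv) to conclude that $v$ is a vertex of $s$. The only cosmetic difference is that the paper first records $C_1 = C - \{s\}$ (noting $p\not\in C$ holds without the divided-complex hypothesis) before addressing $s$, whereas you treat both simplices at the end; the substance is identical.
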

\begin{proof}
Since $p$ is intermediate in $(K,C,S)$, $p\not\in C$. Hence
$C_1 = C - \{ s, p \} = C - \{ s \}.$
(This is true even if $(K,C,S)$ does not come from a divided complex.)
We claim that if $(K,C,S)$ is associated to $(K,S^0)$, then $s\not\in C$.
Indeed, as $p\not\in C$, $p$ must then have at least one vertex
$v \in S^0$. Then $t = \{ v \}$ is a simplex of $S$ and 
$t$ is a proper face of $p$. (Note that $\dim p \geq 1$, as $p$ is
intermediate in an associated complex.)
By condition (iv) above, $\{ v \}$ is a face of $s$.
But $v\not\in C^0 = K^0 - S^0$, so $s$ has a vertex which is not in $C^0$.
It follows that $s$ cannot be in $C$, as claimed.
\end{proof}

Let us assume that $(K,C,S)$ is the associated layered complex of a divided simplicial
complex $(K,S^0)$. Then, by Lemmas \ref{lem.s1iss} and \ref{lem.c1isc},
$(K_1, C_1, S_1) = (K_1, C, S).$
It follows that
\begin{align} \label{equ.intermediacypresunderintermcoll}
\IM (K_1, C_1, S_1) 
&= \IM (K_1, C, S) 
  = \{ s\in K_1 ~|~ s\not\in C \text{ and } s\not\in S \} \nonumber \\
&= \{ s\in K ~|~ s\not\in C \text{ and } s\not\in S \} \cap K_1 
   = \IM (K,C,S) \cap K_1. 
\end{align}
Thus the notion of intermediacy is preserved under the passage from
$K$ to $K_1$: a simplex in $K_1$ is intermediate in $(K_1, C_1, S_1)$ if and only if
it is intermediate in $(K, C, S)$. 
Note that neither $s$ nor $p$ can be a vertex.
For if $s$ were a vertex, then $s = \{ v \}$ with $v\in S^0$ or $v\in K^0 - S^0 = C^0$.
In the case $v\in S^0$, we have $s\in S$ and since $s$ is a proper face of $p$,
(iv) implies that $s$ is a proper face of itself, which is impossible.
On the other hand, if $v\in C^0$, then $p=\{ u,v \}$ with $u\in S^0$
(since $p$ is intermediate, and a free face has codimension $1$ in its principal simplex). 
Then $t := \{ u \}$ is a simplex of $S$
which is a proper face of $p$ but not a proper face of $s=\{ v \}$,
contradicting (iv). Thus $s$ is not a vertex. (Since $s$ is a proper face of $p$,
the latter can of course not be a vertex either.)
Thus $K_1$ and $K$ have the same vertex set, $(K_1)^0 = K^0$.
In particular, the partition of $K^0$ into $S^0$ and its complement $C^0$ can
be regarded as a partition of $(K_1)^0$. Thus we have a well-defined
divided complex $(K_1, S^0)$.
\begin{lemma} \label{lem.k1csassoctok1s0}
The layered complex $(K_1, C, S)$ is associated to the divided complex $(K_1, S^0)$.
\end{lemma}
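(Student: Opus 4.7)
The plan is to unpack the definition of the associated layered complex of a divided complex, and then verify that the two subcomplexes it produces from $(K_1,S^0)$ coincide with $C$ and $S$ respectively. Concretely, let $(K_1, C^\ast, S^\ast)$ denote the layered complex associated to $(K_1, S^0)$, so that $S^\ast$ consists of those simplices of $K_1$ whose vertices all lie in $S^0$, and $C^\ast$ consists of those simplices of $K_1$ whose vertices all lie in $C^0 = K^0 - S^0$. The goal is to show $S^\ast = S$ and $C^\ast = C$.

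First I would record that the partition $K^0 = S^0 \sqcup C^0$ makes sense as a partition of $(K_1)^0$: the discussion immediately preceding the lemma already observed that neither $s$ nor $p$ is a vertex, so $(K_1)^0 = K^0$. Consequently, membership of a simplex $t \in K_1$ in $S^\ast$ (resp.\ $C^\ast$) is decided by exactly the same vertex condition that decides membership of $t \in K$ in $S$ (resp.\ $C$).

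Next I would combine this with Lemmas \ref{lem.s1iss} and \ref{lem.c1isc}. By Lemma \ref{lem.s1iss}, $S = S_1 = K_1 \cap S$, so $S \subset K_1$; and every simplex of $S$ has all vertices in $S^0$ by construction of the associated layered complex of $(K, S^0)$. Hence $S \subset S^\ast$. Conversely, if $t \in S^\ast$, then $t \in K_1 \subset K$ and every vertex of $t$ lies in $S^0$, so $t$ is a simplex of $S$. This gives $S^\ast = S$. The argument for $C^\ast = C$ is completely symmetric, using Lemma \ref{lem.c1isc} in place of Lemma \ref{lem.s1iss}, and the fact that $C \subset K_1$ with all vertices in $C^0$.

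There is no genuine obstacle here; the only subtlety is making explicit that the passage from $K$ to $K_1$ does not alter the vertex set, which is why the vertex-based criterion for the associated layered complex gives the same answer on $K$ and on $K_1$. With $S^\ast = S$ and $C^\ast = C$ in hand, the layered complex associated to $(K_1, S^0)$ is by definition $(K_1, C^\ast, S^\ast) = (K_1, C, S)$, which is the statement of the lemma.
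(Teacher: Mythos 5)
Your proof is correct and follows essentially the same approach as the paper: unpack the vertex-based definition of the associated layered complex, check both inclusions for $S$, and argue symmetrically for $C$. The only difference is that you make explicit the appeals to Lemmas \ref{lem.s1iss} and \ref{lem.c1isc} (giving $S, C \subset K_1$) and the observation that $(K_1)^0 = K^0$, whereas the paper treats these as already established in the discussion preceding the lemma.
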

\begin{proof}
If $s$ is a simplex of $K_1$ whose vertices are in $S^0$, then
$s$ is in particular a simplex of $K$ whose vertices are in $S^0$, and
thus $s\in S$, as $(K,C,S)$ is associated to $(K,S^0)$.
Conversely, suppose that $s\in K_1$ is a simplex in $S$. Then $s$ is in particular
a simplex of $K$ which is in $S$, and thus all vertices of $s$ are in $S^0$,
as $(K,C,S)$ is associated to $(K,S^0)$.
Analogous reasoning applied to $C$ instead of $S$ will show that $C$ consists
precisely of those simplices in $K_1$ whose vertices are in $K^0 - S^0$.
\end{proof}

\begin{defn}
We say that the layered simplicial complex $(K_1, C, S)$ described above has been obtained
from the associated layered simplicial complex $(K,C,S)$ of a divided complex $(K,S^0)$ by
an \emph{elementary intermediate collapse}. 
\end{defn}
Lemma \ref{lem.k1csassoctok1s0} allows for an iterative execution of 
elementary intermediate collapses. Thus we may define:
\begin{defn}
The layered simplicial complex $(K_I, C, S)$ has been obtained
from the associated layered simplicial complex $(K,C,S)$ of a 
divided complex $(K,S^0)$ by
an \emph{intermediate collapse}, if $(K_I, C, S)$ is produced by a finite
sequence of elementary intermediate collapses starting from $(K,C,S)$.
\end{defn}

\begin{remark}
In the context of condition (iv), let 
$t$ be a simplex of $S$ which is a proper face of $p\in \IM (K,C,S)$.
Then, by (iv), $t$ is a proper face of $s$, which is itself a proper face of $p$.
Therefore,
\[ \dim t \leq \dim p - 2. \]
This can be regarded as an echo of the pseudomanifold condition:
In a triangulation of an $n$-dimensional pseudomanifold, a simplex is
principal if and only if it has dimension $n$. 
(For if a principal simplex had smaller dimension, then
it would have to be the face of some $n$-simplex and so could not be principal.)
Then if $t$ is any simplex in
the singular set $\Sigma$, 
\[ \dim t \leq \dim \Sigma \leq n-2 = \dim p -2. \]
(We note however, that $p$ in the present situation does not actually 
have a free face $s$.)
\end{remark}

\begin{defn}
Let $(K,C,S), (K',C',S')$ be layered complexes associated to divided
simplicial complexes. We say that 
$(K',C',S')$ has been obtained from $(K,C,S)$ by an
\emph{elementary layered collapse}, if there is an 
elementary $C$-, $S$-, or intermediary collapse of a simplex
in $(K,C,S)$ that yields $(K',C',S')$.
In that case, we say that $(K,C,S)$ is obtained from
$(K',C',S')$ by an \emph{elementary layered expansion}.
Let $(X,\Sigma) = (|K|, |S|)$ and $(X',\Sigma') = (|K'|, |S'|)$
be the polyhedral pairs determined by the layered complexes.
In the above situation, we then say that
$(X',\Sigma')$ has been obtained from $(X,\Sigma)$ by an
\emph{elementary stratified collapse} and $(X,\Sigma)$ from
$(X',\Sigma')$ by an \emph{elementary stratified expansion}.
A \emph{layered collapse} from $(K,C,S)$ to $(K',C',S')$ is
a finite sequence of elementary layered collapses;
similarly for \emph{layered expansions}. For the polyhedral pairs,
this leads to the notion of \emph{stratified collapses} and
\emph{stratified expansions}.
A \emph{layered formal deformation} from $(K,C,S)$ to 
$(K',C',S')$ is a finite sequence of transformations, each of which is
either a layered collapse or a layered expansion, starting from
$(K,C,S)$ and ending with $(K',C',S')$.
The associated sequence of polyhedral pairs is called a
\emph{stratified formal deformation} from $(X,\Sigma)$ to
$(X',\Sigma')$.
\end{defn}

Classical elementary collapses can be executed in any order,
since a free face cannot ever become non-free by performing
classical collapses.
Our stratified theory involves three different types of collapses
and we must carefully investigate the effects of sequentially ordering these
types, particularly as this is relevant for the algorithmic
implementation (Section \ref{sec.implementation}).
\begin{lemma} \label{lem.codim2faces}
If $p$ is a simplex and $q<p$ a face of codimension at least $2$, then 
for any proper face $s<p$ with $q<s$, there
exists a second proper face $t<p,$ $t\not= s,$ such that $q<t$.
\end{lemma}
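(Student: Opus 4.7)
The plan is to reduce the claim to a simple counting statement about proper nonempty subsets of the vertex-difference $V(p)\setminus V(q)$, where $V(\cdot)$ denotes the vertex set of a simplex.

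First I would observe that the faces $t$ of $p$ with $q\leq t\leq p$ correspond bijectively to subsets $A\subseteq V(p)\setminus V(q)$ via $V(t)=V(q)\cup A$. Under this correspondence, the condition $q<t$ translates to $A\neq\varnothing$, and $t<p$ translates to $A\neq V(p)\setminus V(q)$. Thus the set of proper faces $t<p$ with $q<t$ is in bijection with the collection of proper nonempty subsets of $V(p)\setminus V(q)$.

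The hypothesis that $q$ has codimension at least $2$ in $p$ means $m:=|V(p)\setminus V(q)|\geq 2$. Consequently, the number of proper nonempty subsets of $V(p)\setminus V(q)$ equals $2^m-2\geq 2$. In particular there are at least two distinct faces $t$ of $p$ satisfying $q<t<p$, so given any specific such face $s$, there must exist another one $t\neq s$. This would complete the proof.

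The argument is essentially a one-line counting observation, so there is no serious obstacle; the only thing worth double-checking is that the correspondence between intermediate faces and subsets of $V(p)\setminus V(q)$ is well-defined (i.e. that the simplex $V(q)\cup A$ actually lies in the complex containing $p$), but this is immediate because every nonempty subset of the vertex set of a simplex in a simplicial complex is itself a simplex of the complex.
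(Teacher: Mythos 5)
Your proof is correct. It differs from the paper's in method: the paper gives an explicit construction, while you give a counting argument. The paper's proof picks a vertex $u\in V(p)\setminus V(s)$ and a vertex $v\in V(s)\setminus V(q)$ (so $v\neq u$), then sets $t=q\cup\{u\}$, which satisfies $q<t<p$ and $t\neq s$ by inspection. You instead observe that the faces strictly between $q$ and $p$ are in bijection with proper nonempty subsets of $V(p)\setminus V(q)$, of which there are $2^m-2\geq 2$ when $m=\codim_p q\geq 2$, so some intermediate face differs from $s$. Both are one-line arguments resting on the same identification of intermediate faces with subsets of the vertex difference; the paper's version is constructive (it hands you the $t$), which can be marginally more convenient if one later wants to exhibit $t$ algorithmically, while your cardinality argument is arguably cleaner and makes the role of the codimension-$\geq 2$ hypothesis completely transparent. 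Either version suffices for the two places in the paper (Propositions \ref{prop.nosintermthennos} and \ref{prop.nointermcthennointerm}) where only the existence of such a $t$ is used.
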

\begin{proof}
The face $s$ is obtained from $p$ by omitting at least one vertex $u$.
The face $q$ is obtained from $s$ by omitting at least one vertex $v$, $v\not= u$.
Then $t = q \cup \{ u \}$ does the job.
\end{proof}
\begin{prop} \label{prop.nosintermthennos}
Let $(K,C,S)$ be the associated layered complex of a divided simplicial complex.
If $(K,C,S)$ admits no $S$-collapse and $(K',C,S)$ is obtained from $(K,C,S)$
by an intermediate collapse, then $(K',C,S)$ still does not admit an $S$-collapse.
\end{prop}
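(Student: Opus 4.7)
By induction on the length of the sequence of elementary intermediate collapses taking $(K,C,S)$ to $(K',C,S)$, using Lemma \ref{lem.k1csassoctok1s0} to preserve the layered-complex structure at each intermediate stage, it suffices to treat the case of a single elementary intermediate collapse, removing a pair $\{s_0,p_0\}$ subject to conditions (i)--(iv). I argue by contradiction: assume $(K',C,S)$ admits an elementary $S$-collapse via a simplex $p\in S$ principal in $K'$ together with a proper face $s\in S$ of $p$ which is free in $K'$. The aim is to show that this same pair $(s,p)$ is already an $S$-collapse in $(K,C,S)$, contradicting the hypothesis.

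The plan has two steps. First I verify that $p$ is principal in $K$: if not, $p$ is a proper face of some simplex in $K\setminus K'=\{s_0,p_0\}$, and since $p\in S$, condition (iv) forces the chain $p<s_0<p_0$, placing $p$ as a codimension-$\geq 2$ face of $p_0$. Second I verify that $s$ is free in $K$: if not, $s$ is a proper face of some simplex in $\{s_0,p_0\}$ other than $p$ (automatic, since $s_0,p_0\notin S$ while $p\in S$), and again condition (iv) produces $s<s_0<p_0$, so $s$ has codimension at least $2$ in $p_0$.

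Both obstructions are then dispatched uniformly by Lemma \ref{lem.codim2faces}, applied to the simplex $p_0$ with codimension-$\geq 2$ face $p$ (respectively $s$) and with $s_0$ playing the role of the proper face containing it. The lemma supplies a further proper face $t<p_0$, $t\neq s_0$, with $p<t$ (respectively $s<t$). Since $t\notin\{s_0,p_0\}$ we have $t\in K'$, so $t$ is a proper super-simplex of $p$ (respectively of $s$) in $K'$. In the first case this contradicts the principality of $p$ in $K'$; in the second case, to contradict the freeness of $s$ in $K'$, one additionally needs $t\neq p$, which is automatic since $t<p_0$ would otherwise make $p$ a proper face of $p_0$ and contradict the principality of $p$ in $K$ established in the first step. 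I expect the main conceptual subtlety to lie in recognizing that condition (iv), combined with the hypothesis $p,s\in S$, is exactly what delivers the codimension-$\geq 2$ condition needed to invoke Lemma \ref{lem.codim2faces}; after that, the remainder of the proof is formal.
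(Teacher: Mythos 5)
Your proof is correct and follows essentially the same route as the paper's: argue by contradiction, use condition (iv) to place the problematic $S$-simplex as a codimension-$\geq 2$ face of the removed intermediate principal simplex, and then invoke Lemma~\ref{lem.codim2faces} to produce a second coface in $K'$, contradicting principality (resp.\ freeness) in $K'$. The only cosmetic difference is that you make the induction reduction to a single elementary intermediate collapse explicit, and you frame the two cases as jointly exhibiting an $S$-collapse in $(K,C,S)$ rather than treating them as separate contradictions, but the key steps are identical.
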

\begin{proof}
The complex $K'$ is of the form
\[ K' = K - \{ f,p \}, \]
where $f$ and $p$ satisfy the conditions (i)--(iv) of an intermediate collapse.
In particular, $p\in \IM (K,C,S)$, $p$ is principal in $K$, and $f<p$ is free in $K$.
We argue by contradiction: Suppose that $(K',C,S)$ \emph{does} admit an $S$-collapse.
Then there is a simplex $q\in S$ which is principal in $K'$ and has a
proper face $s<q$ which is free in $K'$.
Since there is no $S$-collapse possible in $(K,C,S),$ either $q$ is not
principal in $K$, or $q$ has no free face in $K$.

Suppose first that $q$ is not principal in $K$.
Then $q<p$ or $q<f$. 
If $q<p$, then by condition (iv) for intermediate collapses,
$q<f$ (as $q\in S$). So we may assume $q<f$.
Thus $q$ is a face of codimension at least $2$ of $p$.
By Lemma \ref{lem.codim2faces}, 
there exists a proper face $r<p$ such that $r\not= f$ and $q<r$.
Note that $r$ is a simplex of $K'$.
Thus $q$ is not principal in $K'$, a contradiction.
We conclude that $q$ must already be principal in $K$ and
$q$ has no free face in $K$.
In particular, $s$ is \emph{not} free in $K$, but \emph{is} free in $K'$.
Therefore, $s<p$ or $s<f$.
If $s<p$, then by condition (iv) for intermediate collapses,
$s<f$ (as $s\in S$). So we may assume $s<f$.
Thus $s$ is a face of codimension at least $2$ of $p$.
By Lemma \ref{lem.codim2faces}, 
there exists a proper face $r<p$ such that $r\not= f$ and $s<r$.
Note that $r$ is a simplex of $K'$.
If $r=q$, then $q<p$, so $q$ is not principal in $K$, a contradiction.
Hence $r\not= q$.
But then $s<r\in K'$ and $s<q\in K'$ and $r\not= q$, which contradicts
the freeness of $s$ in $K'$.
Thus $q$ and $s$ as above cannot exist and $(K',C,S)$ does not
admit an $S$-collapse, as was to be shown.
\end{proof}

The following example illustrates the process of layered collapses
and shows in particular that intermediate collapses, carried out
after all possible $S$- and $C$-collapses have been performed, may enable
new $C$-collapses (though by the proposition, they cannot enable new $S$-collapses).
\begin{example} \label{expl.intermccoll}
Let $p = \{ s_0, s_1, c_0, c_1 \}$ be a $3$-simplex with vertices
$s_0, s_1, c_0, c_1$ and let $K$ be the simplicial complex generated by $p$.
Let $S^0 = \{ s_0, s_1 \}$, determining a divided complex $(K,S^0)$.
The complex $S$ of the associated layered complex $(K,C,S)$ is the simplicial
subcomplex generated by the $1$-simplex $\{ s_0, s_1 \}$ and $C$ is the
subcomplex generated by the $1$-simplex $\{ c_0, c_1 \}$, see
Figure \ref{fig.expleintermcoll}.
% ==============================================
\begin{figure}[!h]
    \centering
    \begin{minipage}{0.245\textwidth}
        \centering
        \includegraphics[width=1\textwidth]{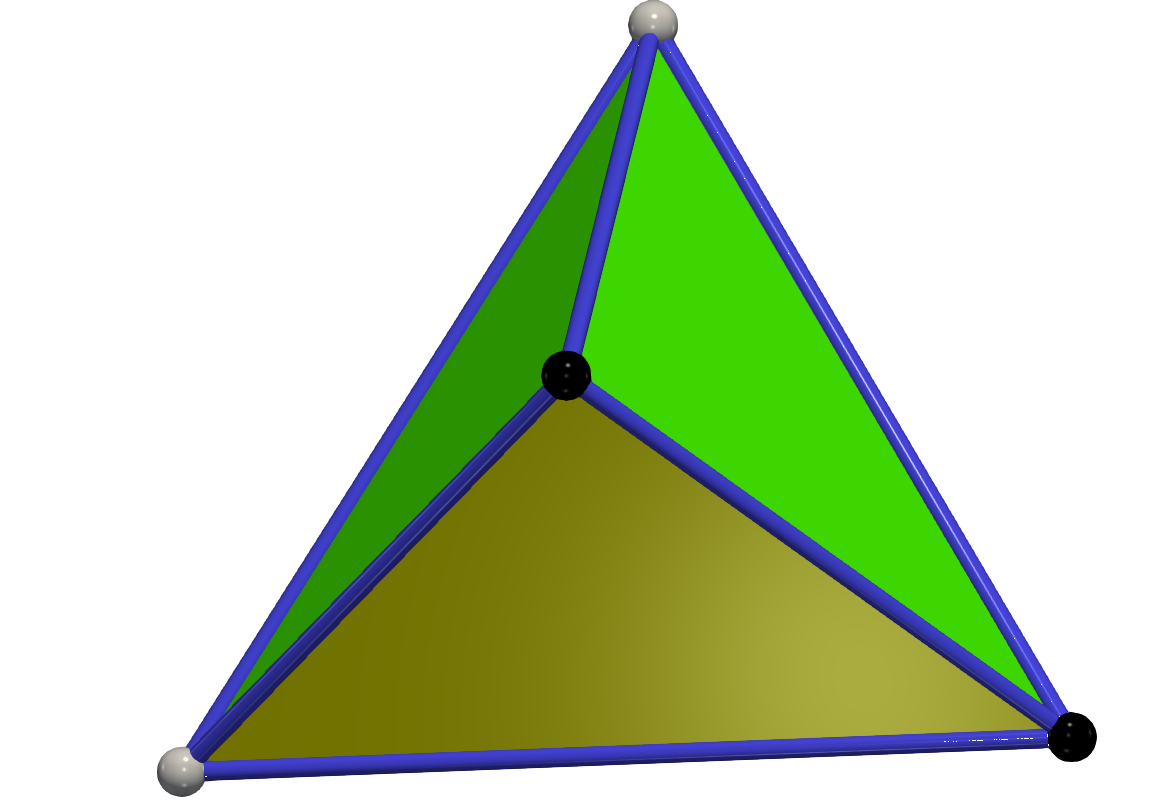}
        \put(-5,9){$s_0$}
        \put(-43,36){$s_1$}
        \put(-87,-4){$c_1$}
        \put(-36,62){$c_0$}
    \end{minipage}\hfill
    \begin{minipage}{0.245\textwidth}
        \centering
        \includegraphics[width=1\textwidth]{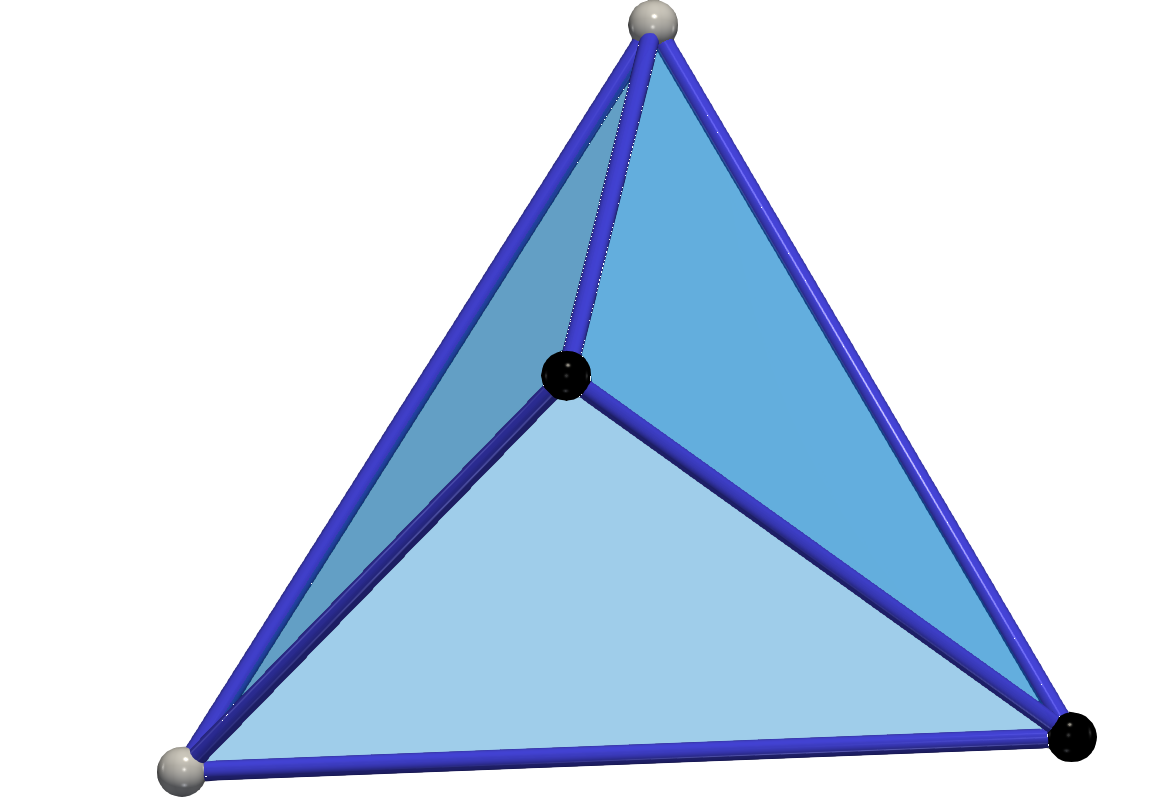}
        \put(-5,9){$s_0$}
        \put(-43,36){$s_1$}
        \put(-87,-4){$c_1$}
        \put(-36,62){$c_0$}
    \end{minipage}
    \begin{minipage}{0.245\textwidth}
        \centering
        \includegraphics[width=1\textwidth]{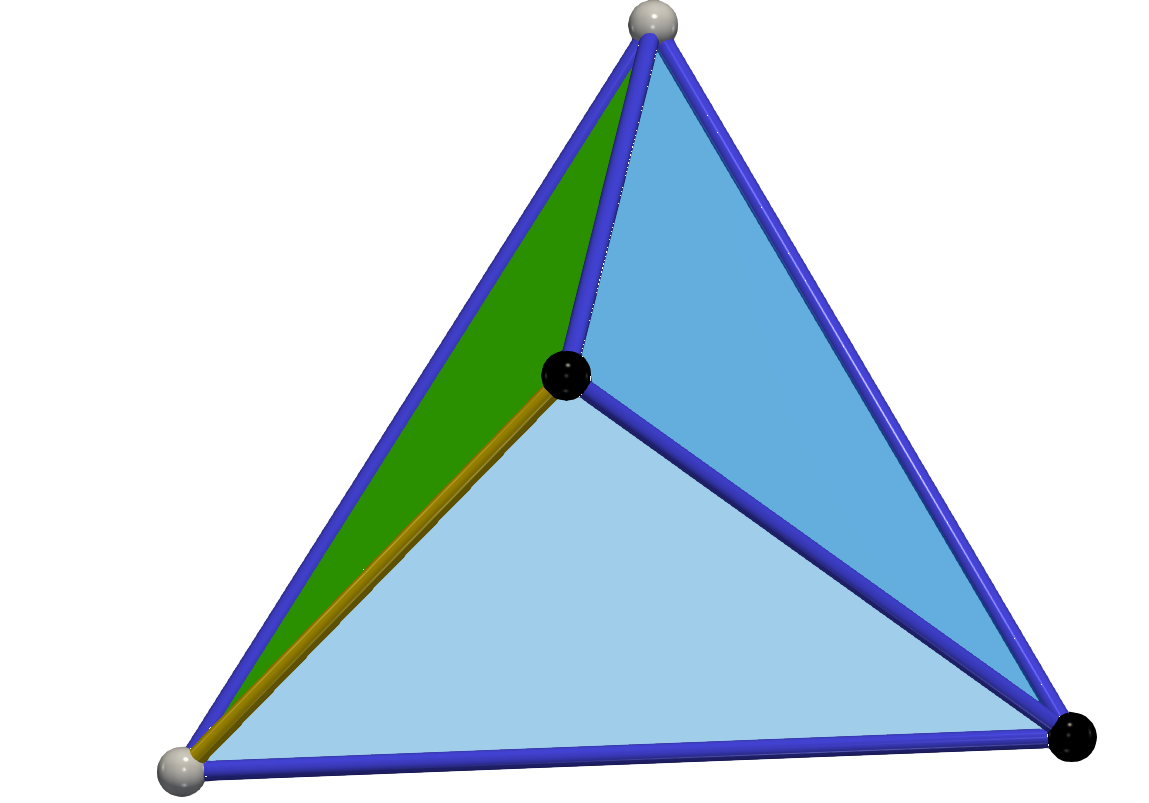}
        \put(-5,9){$s_0$}
        \put(-43,36){$s_1$}
        \put(-87,-4){$c_1$}
        \put(-36,62){$c_0$}
    \end{minipage}
  	      \centering
    \begin{minipage}{0.245\textwidth}
        \centering
        \includegraphics[width=1\textwidth]{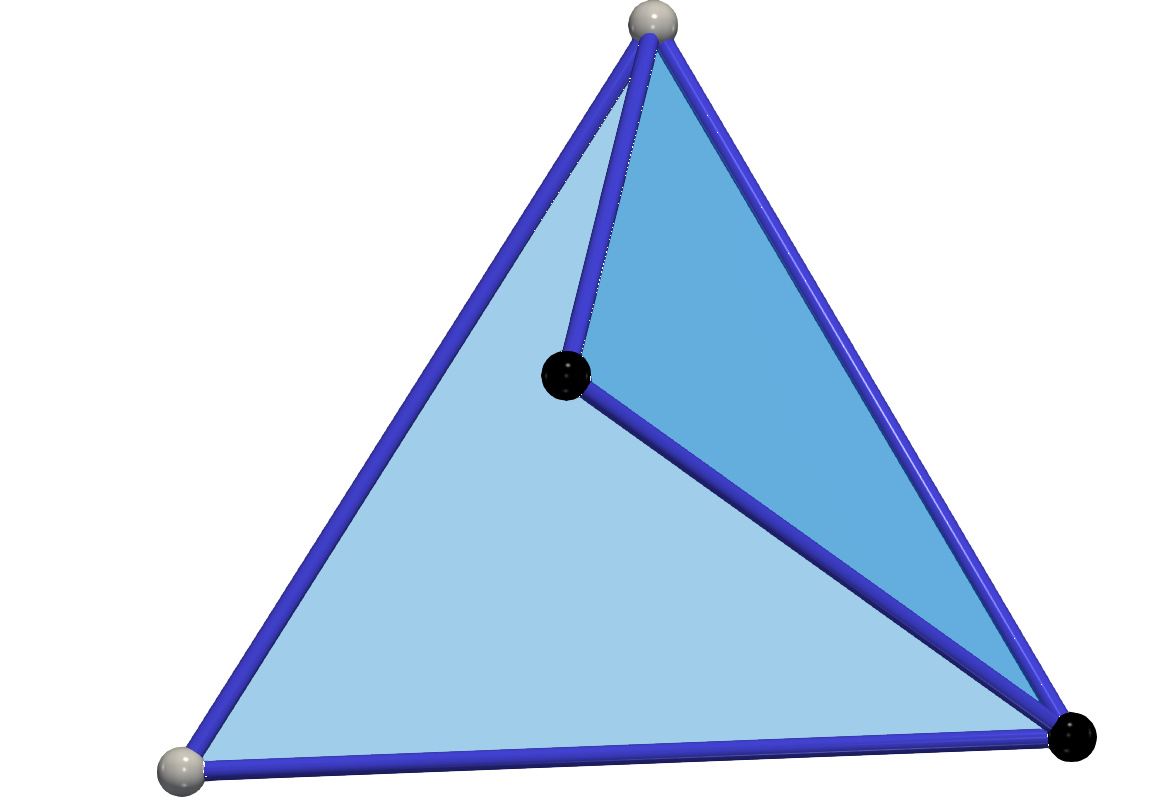}
        \put(-5,9){$s_0$}
        \put(-43,36){$s_1$}
        \put(-87,-4){$c_1$}
        \put(-36,62){$c_0$}
    \end{minipage}\hfill
    \begin{minipage}{0.245\textwidth}
        \centering
        \includegraphics[width=1\textwidth]{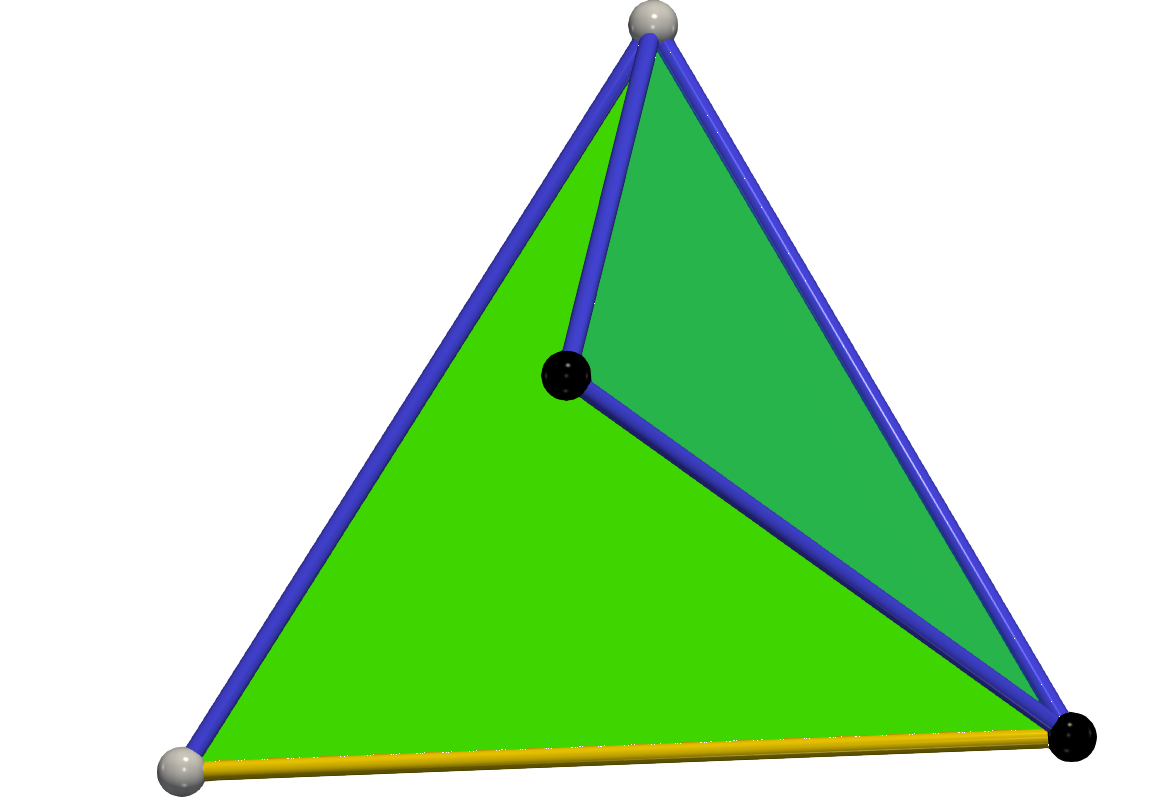}
        \put(-5,9){$s_0$}
        \put(-43,36){$s_1$}
        \put(-87,-4){$c_1$}
        \put(-36,62){$c_0$}
    \end{minipage}
    \begin{minipage}{0.245\textwidth}
        \centering
        \includegraphics[width=1\textwidth]{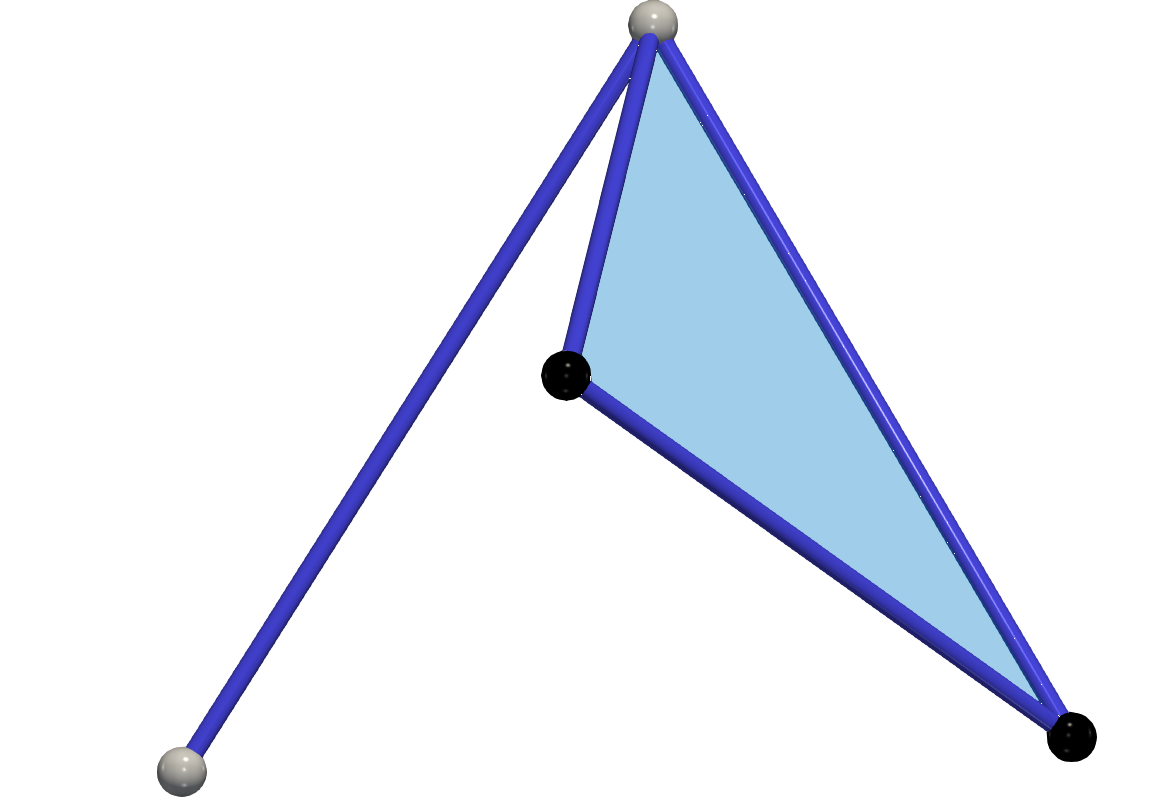} 
        \put(-5,9){$s_0$}
        \put(-43,36){$s_1$}
        \put(-87,-4){$c_1$}
        \put(-36,62){$c_0$}
    \end{minipage}  
        \centering
    \begin{minipage}{0.245\textwidth}
        \centering
        \includegraphics[width=1\textwidth]{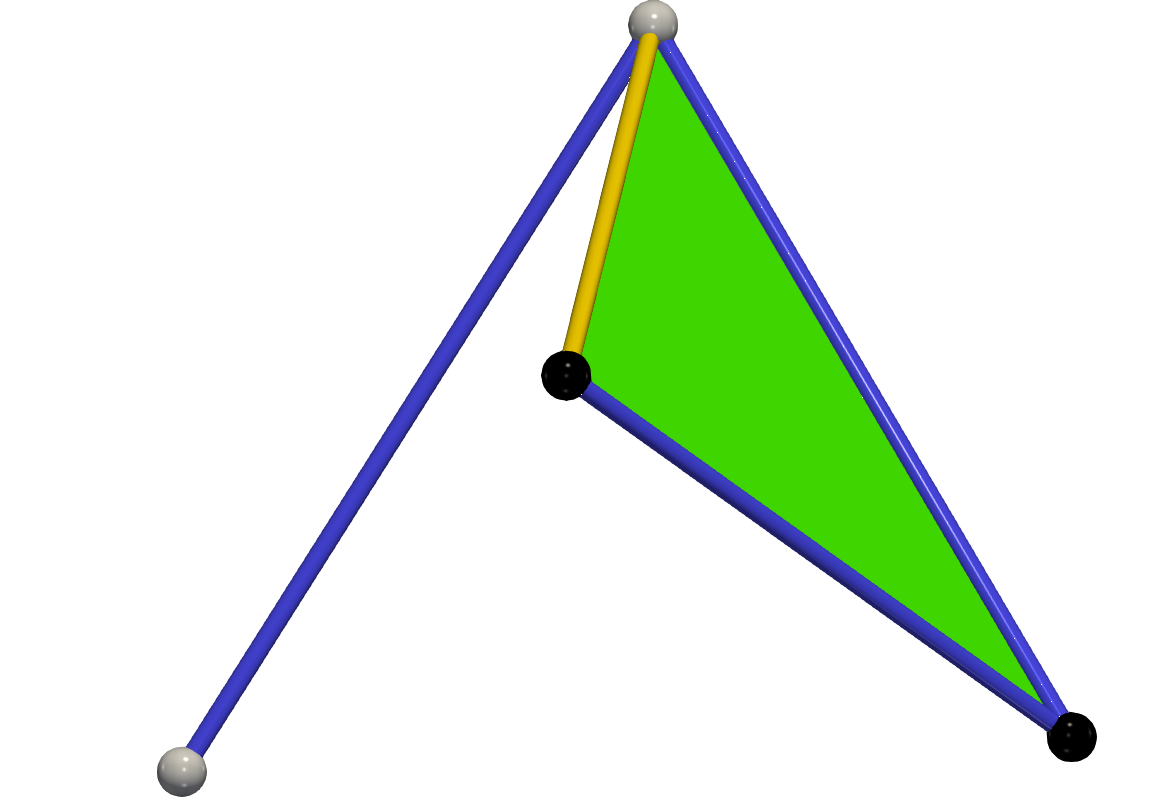}
        \put(-5,9){$s_0$}
        \put(-43,36){$s_1$}
        \put(-87,-4){$c_1$}
        \put(-36,62){$c_0$}
    \end{minipage}\hfill
    \begin{minipage}{0.245\textwidth}
        \centering
        \includegraphics[width=1\textwidth]{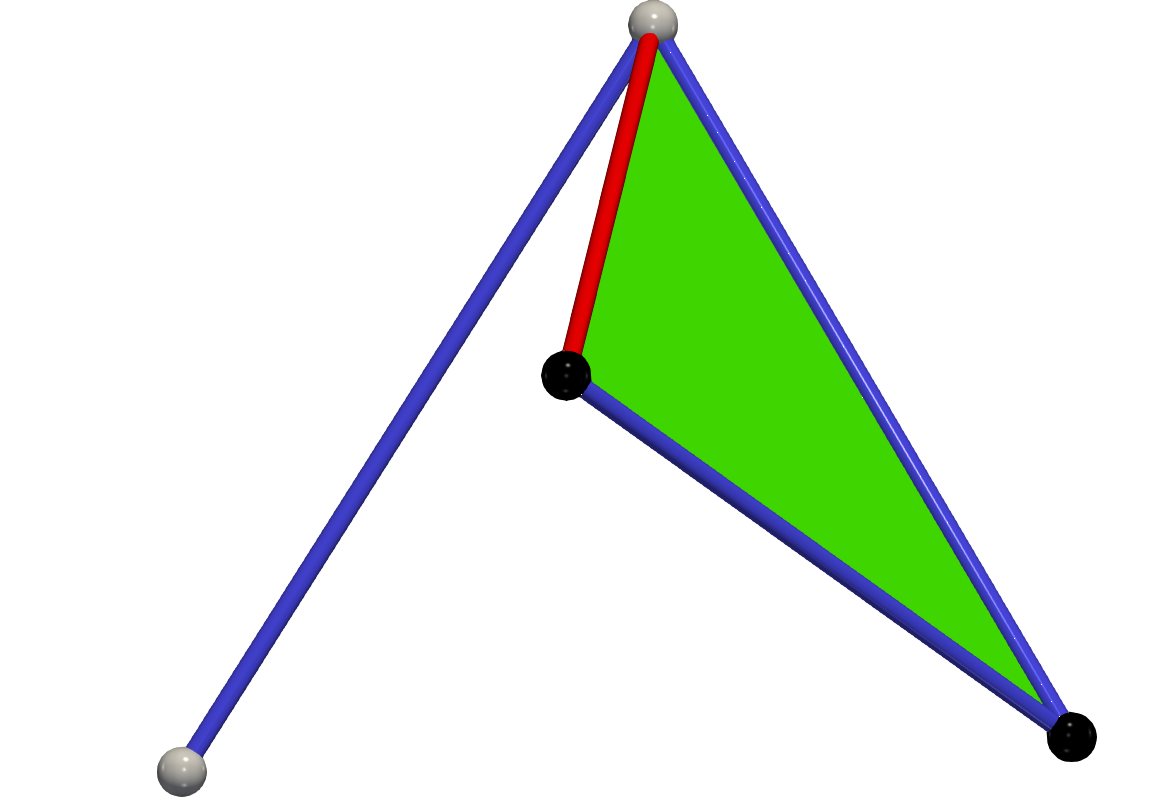}
        \put(-5,9){$s_0$}
        \put(-43,36){$s_1$}
        \put(-87,-4){$c_1$}
        \put(-36,62){$c_0$}
    \end{minipage}
    \begin{minipage}{0.245\textwidth}
        \centering
        \includegraphics[width=1\textwidth]{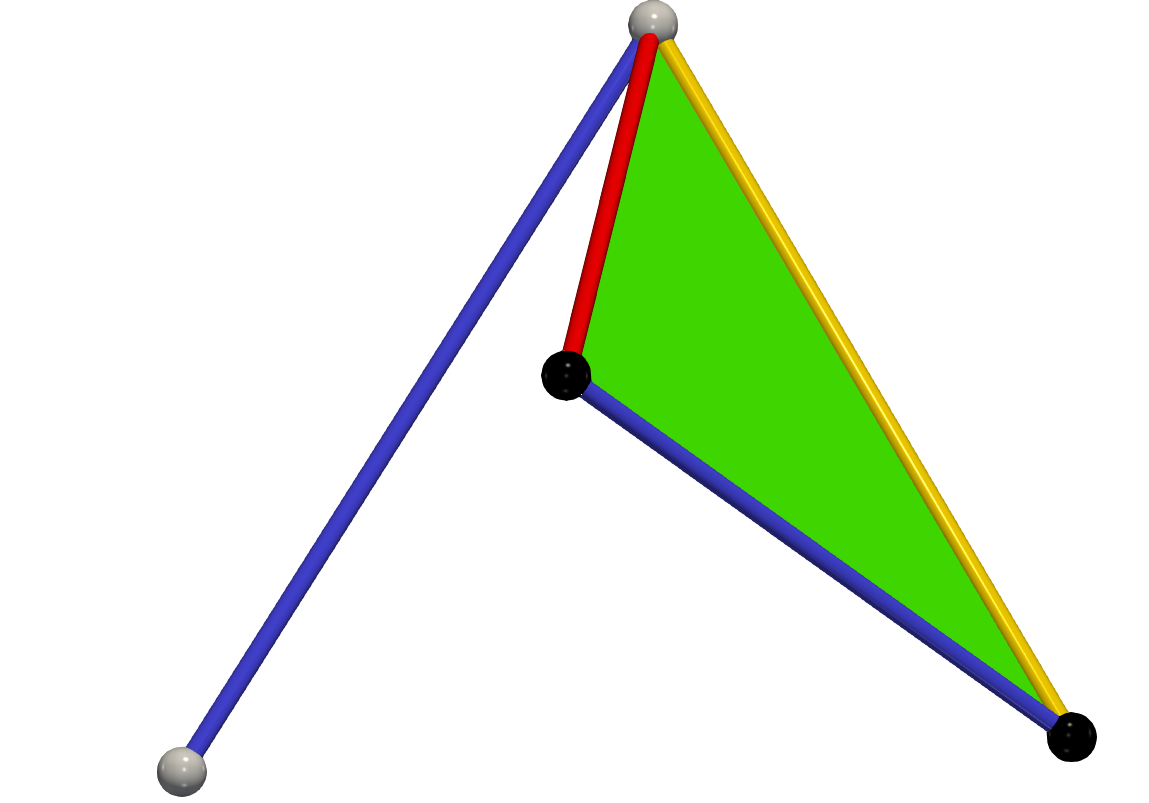}
        \put(-5,9){$s_0$}
        \put(-43,36){$s_1$}
        \put(-87,-4){$c_1$}
        \put(-36,62){$c_0$}
    \end{minipage}  
        \centering
    \begin{minipage}{0.245\textwidth}
        \centering
        \includegraphics[width=1\textwidth]{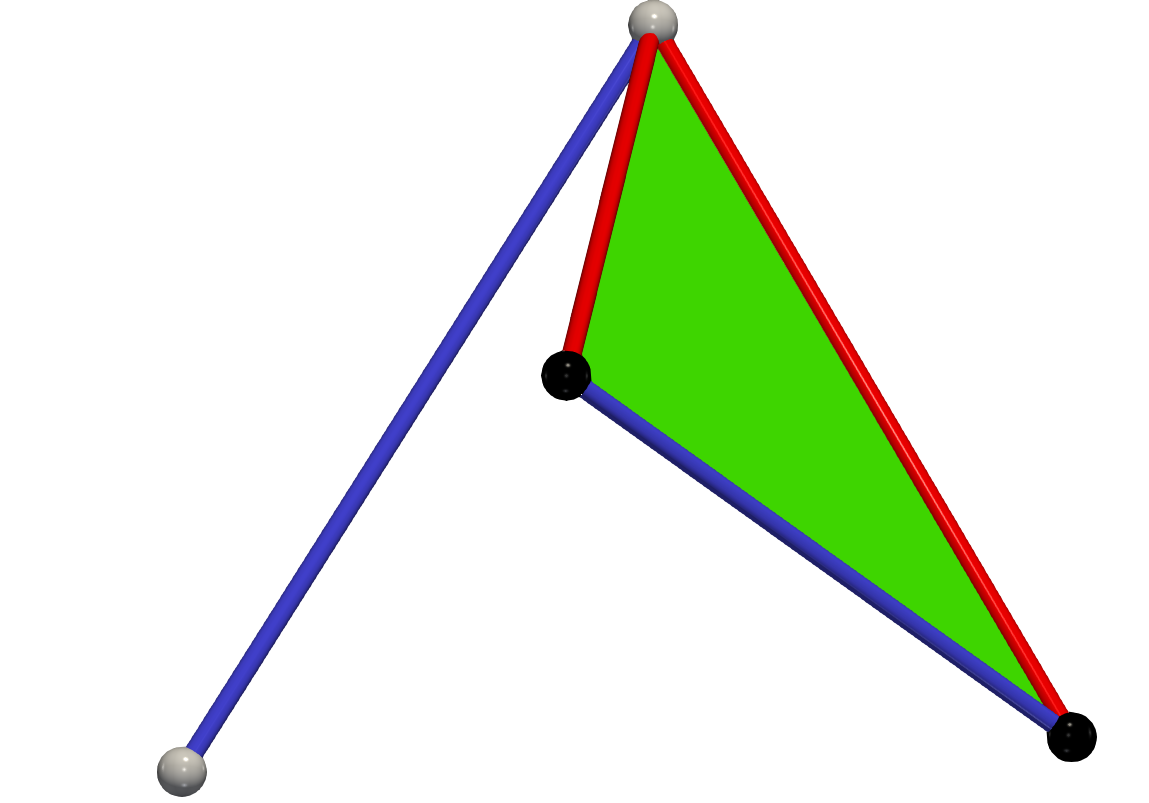}
        \put(-5,9){$s_0$}
        \put(-43,36){$s_1$}
        \put(-87,-4){$c_1$}
        \put(-36,62){$c_0$}
    \end{minipage}\hfill
    \begin{minipage}{0.245\textwidth}
        \centering
        \includegraphics[width=1\textwidth]{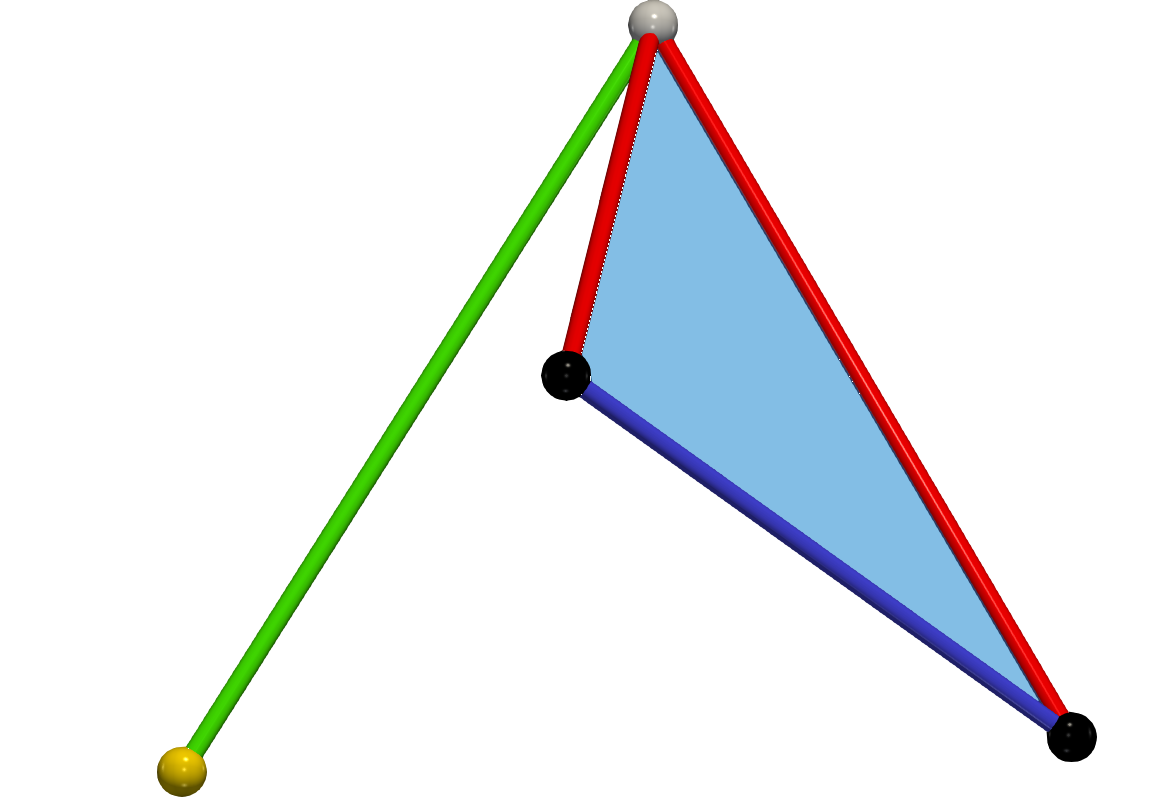}
        \put(-5,9){$s_0$}
        \put(-43,36){$s_1$}
        \put(-87,-4){$c_1$}
        \put(-36,62){$c_0$}
    \end{minipage}
    \begin{minipage}{0.245\textwidth}
        \centering
        \includegraphics[width=1\textwidth]{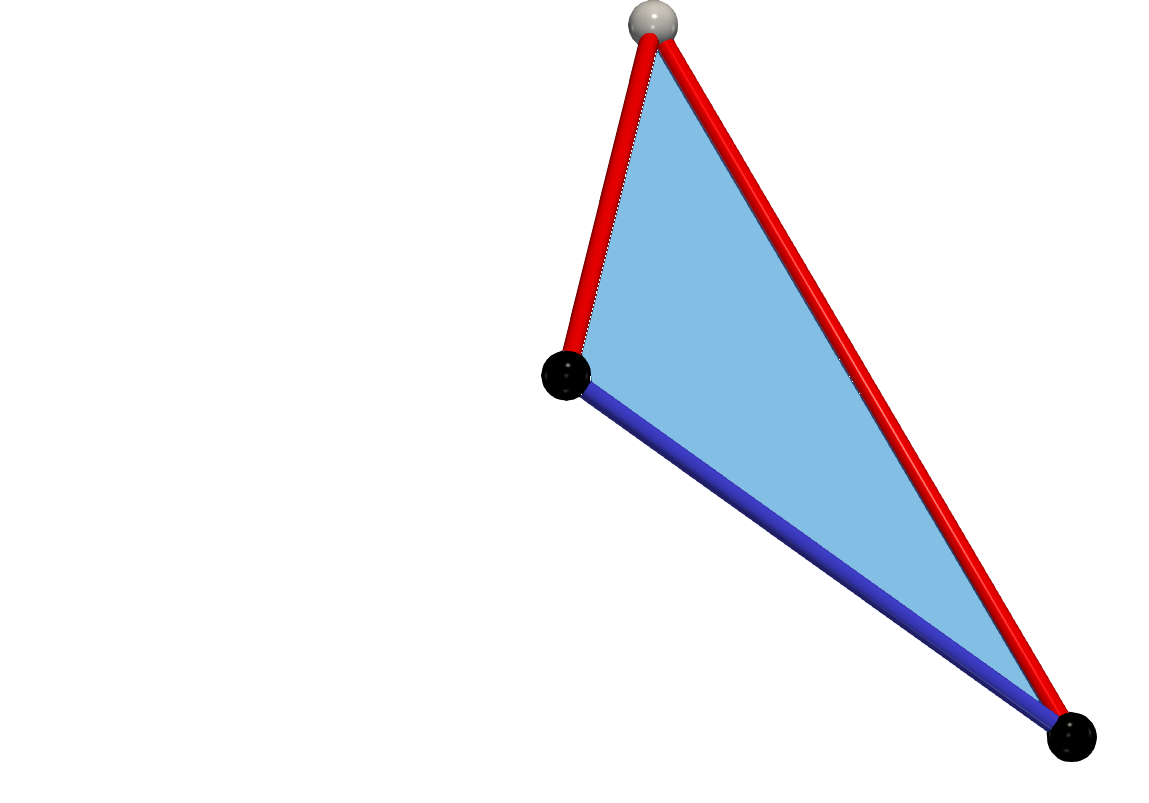}
        \put(-5,9){$s_0$}
        \put(-43,36){$s_1$}
        \put(-36,62){$c_0$}
    \end{minipage}  
    \caption{Intermediate and $C$-Collapses of $K$ to a layered spine. 
    Free faces (yellow), principal cofaces (green), 
    intermediate free faces not fulfilling condition (iv) (red).}
    \label{fig.expleintermcoll}
\end{figure}
%==================================================

No $S$-collapse is possible in $(K,C,S)$, since the $1$-simplex
$\{ s_0, s_1 \}$, while principal in $S$, is not principal in $K$.
Similarly, no $C$-collapse is possible.
However, there is one possible intermediate collapse:
The simplex $p = \{ s_0, s_1, c_0, c_1 \}$ is intermediate in $(K,C,S)$,
principal in $K$, and
$s=\{ s_0, s_1, c_1 \}$ is a free face of $p$.
We have to verify condition (iv):
The simplices $t$ in $S$ are $\{ s_0, s_1 \},$ $\{ s_0 \}$ and $\{ s_1 \}$.
All of these are proper faces of $p$, so the conclusion of (iv) must
be checked. Indeed, all these $t$ are proper faces of $s = \{ s_0, s_1, c_1 \}$.
Thus there is an admissible intermediate collapse
$K\searrow K_1 = K - \{ s, p \}$.
The complex $K_1$ is $2$-dimensional, with three $2$-simplices.
No $C$-collapse or $S$-collapse is feasible in $(K_1, C, S)$.
 
The simplex $p = \{ s_1, c_0, c_1 \}$ is intermediate in $(K_1, C,S)$,
principal in $K_1$, and
$s=\{ s_1, c_1 \}$ is a free face of $p$.
We verify condition (iv):
The simplices $t= \{ s_0, s_1 \}$ and $t= \{ s_0 \}$ in $S$
are not faces of $p$. But $t=\{ s_1 \}$ is a proper face of $p$,
so the conclusion of (iv) must be checked for this $t$. Indeed,
$t=\{ s_1 \}$ is a proper face of $s = \{ s_1, c_1 \}$.
Thus there is an admissible intermediate collapse
$K_1 \searrow K_2 = K_1 - \{ s, p \}$.
The complex $K_2$ has precisely two $2$-simplices.

The simplex $p = \{ s_0, c_0, c_1 \}$ is intermediate in $(K_2, C,S)$,
principal in $K_2$, and
$s=\{ s_0, c_1 \}$ is a free face of $p$.
We verify condition (iv):
The simplices $t= \{ s_0, s_1 \}$ and $t= \{ s_1 \}$ in $S$
are not faces of $p$. But $t=\{ s_0 \}$ is a proper face of $p$,
so the conclusion of (iv) must be checked for this $t$. Indeed,
$t=\{ s_0 \}$ is a proper face of $s = \{ s_0, c_1 \}$.
Thus there is an admissible intermediate collapse
$K_2 \searrow K_3 = K_2 - \{ s, p \}$.
The complex $K_3$ has precisely one $2$-simplex.

Note that $(K_3, C,S)$ does not admit an intermediate collapse,
but it does admit a $C$-collapse: The $1$-simplex $\{ c_0, c_1 \}$
is in $C$, is principal in $K_3$ and has the free face
$\{ c_1 \}$. The corresponding elementary $C$-collapse
yields a layered complex $(K_{3C}, C', S)$ with
$C' = \{ \{ c_0 \} \}$.
Although the complex $K_{3C}$ (forgetting the layer structure)
is collapsible to a point, the layered complex $(K_{3C}, C', S)$
admits no $C$-collapse, no $S$-collapse, and no intermediate collapse.
\end{example}

\begin{prop} \label{prop.nointermcthennointerm}
Let $(K,C,S)$ be the associated layered complex of a divided simplicial
complex $(K,S^0)$. Suppose that $(K,C,S)$ does not admit an intermediate
collapse. If $(K',C',S)$ is obtained from $(K,C,S)$ by a
$C$-collapse, then $(K', C', S)$ still does not admit an intermediate
collapse.
\end{prop}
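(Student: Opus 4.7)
By induction on the length of the collapse sequence, it suffices to prove the statement for a single elementary $C$-collapse, so assume $(K',C',S) = (K - \{f,p\}, C - \{f,p\}, S)$ where $p \in C$ is principal in $K$ and $f < p$ is free in $K$. The plan is to argue by contradiction: suppose that $(K',C',S)$ admits an elementary intermediate collapse using an intermediate simplex $q$ and free face $s < q$ satisfying condition (iv). By Lemma \ref{lem.impresundercscoll}, $\IM(K',C',S) = \IM(K,C,S)$, so $q$ is intermediate in $(K,C,S)$ as well.

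I would first verify that $q$ is principal in $K$. If not, $q$ would be a proper face of some simplex in $K - K' = \{f,p\}$, hence $q < p$ by transitivity (note $q < f$ forces $q < p$). Since $p \in C$ and $C$ is vertex-determined, this would put $q \in C$, contradicting the intermediacy of $q$.

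Next, the key observation is that $s$ cannot be free in $K$; otherwise conditions (i)--(iv) for an elementary intermediate collapse in $(K,C,S)$ would all hold for the pair $(q,s)$ -- condition (iv) in particular because $S$ is unchanged by a $C$-collapse -- contradicting the hypothesis. So $s$ is free in $K'$ but not in $K$, which forces $s < r$ for some $r \in \{f,p\}$; in either case $s < p$.

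The crux is to derive a contradiction from $s < p$ combined with condition (iv). Since $p \in C$ and $C$ is vertex-determined, $s < p$ forces $s \in C$, so every vertex of $s$ lies in $C^0 = K^0 - S^0$. On the other hand, since $q$ is intermediate in an associated layered complex, $q$ has at least one vertex $v \in S^0$; then $\{v\} \in S$, and $\{v\} < q$ is a proper face since $q \notin S$ has dimension at least $1$. Condition (iv) applied to $t = \{v\}$ then forces $\{v\} < s$, so $v$ is a vertex of $s$ and hence $v \in C^0$ -- contradicting $v \in S^0$. The main obstacle will be isolating precisely this final incompatibility; the underlying intuition is that any newly created free face produced by a $C$-collapse must itself lie in $C$, and this is irreconcilable with condition (iv) for an intermediate simplex, whose $S$-vertices would have to be absorbed into that free face.
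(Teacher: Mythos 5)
Your proof is correct and follows essentially the same route as the paper's: show that $q$ remains intermediate and principal in $K$, then split on whether the free face $s$ is also free in $K$, using in one branch that an intermediate simplex must have a vertex in $S^0$ while its would-be free face lies in $C$ (so has only $C^0$-vertices), which is incompatible with condition~(iv). The only stylistic difference is that you handle the two cases in reversed order and note more directly that condition~(iv) is inherited unchanged across a $C$-collapse (since $S$ is untouched), whereas the paper first forces $g$ to be free in $K$ and then exhibits a $t\in S$ witnessing the failure of~(iv) in $K$ before observing that this $t$ survives into $K'$ and contradicts~(d); both reach the same contradiction by the same underlying observations.
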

\begin{proof}
The complex $K'$ has the form
$K' = K - \{ f,p \},$ $p\in C$, $p$ principal in $K$, and
$f<p$ free in $K$. Since $f$ and $p$ are in $C$, we have
$C' = C - \{ f,p \}$.
We argue by contradiction:
Suppose an intermediate collapse in $(K',C',S)$ were possible.
Then there would exist simplices $g,q$ in $K'$ such that\\

\noindent (a) $q\in \IM (K',C',S),$\\
(b) $q$ is principal in $K'$,\\
(c) $g<q$ with $g$ free in $K'$,\\
(d) for every $t\in S$: if $t<q$, then $t<g$.\\

\noindent Statement (a) implies that $q\not\in S$ and $q\not\in C' = C-\{ f,p \}$.
As $q$ lies in $K'$ it must be different from $f$ and $p$.
Thus $q\not\in C$. Hence is already intermediate in $(K,C,S$,
$q\in \IM (K,C,S)$.

Suppose $q$ were not principal in $K$.
Then, as it \emph{is} principal in $K'$ by (b), $q<p$ or $q<f$.
If $q<f$, then in particular $q<p$, so we may assume $q<p$.
But this places $q$ into $C$, a contradiction to $q\in \IM (K,C,S)$.
Therefore, $q$ is already principal in $K$.

Is $g$ free in $K$? If not, then (since it \emph{is} free in $K'$)
$g<p$ or $g<f$. If $g<f$, then $g<p$, so we may assume the latter,
which implies that $g\in C$. So $g$ has the form
$g=\{ c_0,\ldots, c_k \}$ for vertices $c_0,\ldots, c_k \in C^0 = K^0 - S^0$.
Since $g$ is a codimension one face of $q$ (Lemma \ref{lem.freehascodim1}),
$q$ must have the form
$q =\{ c_0,\ldots, c_k, s \}$. Since $q$ is intermediate, the vertex $s$
must be in $S^0$.
Take $t:= \{ s \}$. Then $t\in S$ and $t<q$.
By (d), $t<g$ and we reach a contradiction.
We conclude that $g$ is free in $K$.

We have seen that $q\in \IM (K,C,S),$ $q$ is principal in $K$, and
$g$ is a free face of $q$ in $K$.
But $(K,C,S)$ does not admit an intermediate collapse.
Consequently, condition (iv) for intermediate collapses must be
violated for $g,q$ in $(K,C,S)$. Thus there must exist a
$t\in S$ such that $t<q$ but $t$ is not a proper face of $g$ in $K$.
Such a $t$ must be in $K'$, and we arrive at a contradiction to (d).
Therefore, an intermediate collapse in $(K',C',S)$ is not possible.
\end{proof}

\begin{defn}
Let $(K,C,S)$ be the associated layered simplicial complex of
a divided complex.
A layered simplicial complex $(K', C', S')$ is called a
\emph{layered spine} of $(K,C,S)$ if
$(K', C', S')$ can be obtained from $(K,C,S)$ by a layered collapse
and $(K',C',S')$ does not admit any further layered collapse
(though it may admit further ordinary collapses).
In this case, we say that the polyhedral pair
$(X',\Sigma') = (|K'|, |S'|)$ is a \emph{stratified spine}
of $(X,\Sigma) = (|K|,|S|)$.
\end{defn}

As Example \ref{expl.intermccoll} shows, a layered spine of a layered
complex $(K,C,S)$ need not be a spine (in the ordinary sense) of the underlying 
complex $K$.
However, if the polyhedron of the underlying simplicial complex of a layered spine is
a pseudomanifold, then it \emph{will} be a spine, by Lemma \ref{lem.pseudomfdisspine}.
Note that the polyhedron of $K_C$ in Example \ref{expl.intermccoll} is
not a pseudomanifold.
Section \ref{sec.examples} contains several example calculations of
stratified spines, and their intersection homology,
in simplicial complexes coming from point data.

\section{Freely Orthogonal Deformation Retractions}
\label{sec.freelyorthogdefretr}

The polyhedron of an elementary simplicial collapse is
a deformation retract of the polyhedron of the original complex.
In order to obtain stratified homotopy equivalence, the particular
choice of deformation retraction plays a role. We shall here construct
suitable explicit retractions, called \emph{freely orthogonal}, 
and investigate their properties. This material is needed only
in the proofs of our main results in Section \ref{sec.formaldefhtpytypeih}.

Let $K$ be a simplicial complex, $p$ a principal simplex of $K$,
and $f$ a free face of $p$. With $K'$ denoting the result of the
associated elementary collapse, we shall construct a particular 
deformation retraction
\[ H: |K| \times I \longrightarrow |K| \]
onto $|K'|$. Essentially, the idea is to project orthogonally from
the free face.
Let $v, f_1, \ldots, f_m$ be the vertices of $p$, where the $f_i$
are the vertices of the free face $f$.
Let $e_i$ denote the $i$-th standard basis vector of $\real^m$.
Identify $v$ with the origin $0\in \real^m$ and $f_i$ with $e_i$
for all $i=1,\ldots, m$. Via barycentric coordinates, this 
identifies $|p|$ with the convex hull of 
$0,e_1,\ldots, e_m$ in $\real^m$.
Under this identification, $|p|$ is given by
\[ \{ (x_1,\ldots, x_m)\in \real^m
  ~|~ x_i \geq 0,~ \sum_{i=1}^m x_i \leq 1 \}. \]
The polyhedron $|f|$ of the free face corresponds to the
convex hull of $e_1, \ldots, e_m$,
\[ \{ (x_1,\ldots, x_m)\in \real^m
  ~|~ x_i \geq 0,~ \sum_{i=1}^m x_i = 1 \}. \]
The affine hyperplane $H\subset \real^m$ containing the free face is
of the form $e_1 + L$, where $L \subset \real^m$ is the linear
subspace spanned by the basis
$e_2 - e_1,~ e_3 - e_1,~ \ldots,~ e_m - e_1.$
Let $\ell$ be the line (through the origin) in $\real^m$
spanned by the vector
\[ n = \sum_{i=1}^m e_i = (1,1,\ldots, 1), \]
which is a normal vector to $L$ (with respect to the
standard Euclidean inner product on $\real^m$).
Let $\partial |p|$ denote the boundary of $|p|$ (i.e. the union
of all proper faces) and set
\[ \Lambda = \partial |p| - \operatorname{interior}|f|. \]
Thus $\Lambda$ is the union of all intersections of 
$|p|$ with coordinate hyperplanes $x_j =0$.
\begin{lemma} \label{lem.retractionxtoy}
For every point $x\in |p|$, the line through $x$ orthogonal to $H$
intersects $\Lambda$ in a unique point $y$, that is,
$(x+\ell) \cap \Lambda = \{ y \}.$
\end{lemma}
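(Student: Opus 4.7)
The plan is to work entirely in the coordinates set up above, parametrizing the line $x+\ell$ as $\gamma(t)=x+tn=(x_1+t,\ldots,x_m+t)$ and extracting $y$ as an explicit closed-form expression in the $x_i$.

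First I would reformulate $\Lambda$ in a way that is easy to test against $\gamma(t)$. A point of $|p|$ lies in $\operatorname{interior}|f|$ iff $\sum x_i=1$ and every $x_i>0$; hence $\Lambda$ is exactly the set of points of $|p|$ with at least one vanishing coordinate. So it suffices to characterize the set of $t\in\real$ for which $\gamma(t)\in|p|$ and some coordinate of $\gamma(t)$ is zero.

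Next I would analyze the two conditions separately. The constraint $\gamma(t)\in|p|$ becomes $x_i+t\geq 0$ for all $i$ and $\sum x_i+mt\leq 1$, i.e.\ $t\in[-\min_i x_i,\ (1-\sum x_i)/m]$; both endpoints exist and are well-defined because $x\in|p|$. The constraint that some coordinate of $\gamma(t)$ vanishes is $t=-x_j$ for some $j$. Since each $x_j\geq\min_i x_i$, any value $t=-x_j$ different from $-\min_i x_i$ lies strictly to the left of the allowed interval and therefore fails the first constraint. Hence the only admissible value is $t^\ast=-\min_i x_i$, and for that single $t^\ast$ the point
\[ y := \gamma(t^\ast)=\bigl(x_1-\min_i x_i,\ \ldots,\ x_m-\min_i x_i\bigr) \]
has nonnegative coordinates (all shifted down by the minimum), sum $\sum x_i - m\min_i x_i\leq\sum x_i\leq 1$, and at least one coordinate equal to zero. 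This gives both existence and uniqueness of the point in $(x+\ell)\cap\Lambda$.

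The only delicate point, and the one I would flag as requiring care, is the possibility that several indices simultaneously realize $\min_i x_i$ and that the value $t^\ast$ might coincide with the right endpoint $(1-\sum x_i)/m$; I would note that in the first case the line still meets $\Lambda$ in the single point $y$ (the coincidence merely places $y$ on a lower-dimensional face), and in the second case, which forces $\min_i x_i=0$ and $\sum x_i=1$, the point $y$ equals $x$ itself and lies on $\partial|f|\subset\Lambda$. These are genuinely boundary cases but do not break uniqueness.
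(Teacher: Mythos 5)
Your proof is correct and follows essentially the same route as the paper's: parametrize $x+\ell$ by $t\mapsto(x_1+t,\ldots,x_m+t)$, identify $\Lambda$ with the points of $|p|$ having a vanishing coordinate, deduce $t=-x_j$, and then use nonnegativity to force $x_j=\min_i x_i$. The only cosmetic difference is that you phrase the nonnegativity requirement as an interval constraint on $t$, whereas the paper reads it off directly from the coordinates of $y$; these are the same step, and your closing remark about coincident minimizers and boundary cases is a nice sanity check but not needed, since the explicit formula for $y$ is already independent of the choice of minimizing index.
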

\begin{proof}
Given $x=(x_1,\ldots, x_m)\in |p|$
points on $x+\ell$ have the form
\[ x+tn = (x_1 +t,\ldots, x_m +t),~ t\in \real. \]
A point of $|p|$ lies in $\Lambda$ precisely when at least one of its
coordinates vanishes. So if $y$ is on $x+\ell$ and on $\Lambda$, then
one of its coordinates, say $x_j +t$ vanishes, so that $t= -x_j$.
Thus
\begin{equation} \label{equ.coordy}
y = (x_1 - x_j, x_2 -x_j, \ldots, x_m - x_j). 
\end{equation}
But all of these coordinates must be nonnegative; hence
\begin{equation} \label{equ.determinexj}
x_j = \min \{ x_1,\ldots, x_m \}. 
\end{equation}
This shows that $y$, if it exists, is unique.
Now for existence, define $y$ by
(\ref{equ.determinexj}) and (\ref{equ.coordy}).
Then $y$ is plainly on $x+\ell$ and on $\Lambda$.
\end{proof}
Lemma \ref{lem.retractionxtoy} shows that there is a well-defined map
$r: |p| \longrightarrow \Lambda,~ x\mapsto y.$
Explicitly, this map is given by
\[ r(x_1,\ldots,x_m) = (x_1 - x_j, x_2 -x_j, \ldots, x_m - x_j),~ 
   x_j = \min \{ x_1,\ldots, x_m \}. \]
This shows that $r$ is continuous.
If $x\in \Lambda,$ then $x_j =0$ and thus $r(x)=x$.
Thus $r$ is a retraction onto $\Lambda$.

\begin{lemma} \label{lem.rxinbfimplxinbf}
If $x\in |p|$ is a point such that $r(x)\in \partial |f|,$ then
$x\in \partial |f|$.
\end{lemma}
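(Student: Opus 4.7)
The plan is to translate the hypothesis $r(x)\in \partial |f|$ into the coordinate conditions defining $|f|$ and $\Lambda$, apply the explicit formula for $r$, and read off from the resulting inequality that $x$ itself must lie on the boundary of the free face.

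First, I would write down what $r(x)\in \partial |f|$ means in the coordinate setup already established: a point lies in $|f|$ exactly when all its coordinates are nonnegative and their sum equals $1$, and it lies in $\partial |f|$ when additionally at least one coordinate vanishes. In particular, the vanishing-coordinate condition is automatically satisfied by $r(x)$, since $r(x)\in \Lambda$ by Lemma \ref{lem.retractionxtoy}. So the genuine content of $r(x)\in \partial |f|$ is the equality of the coordinate sum of $r(x)$ with $1$.

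Next I would plug in the explicit formula
\[ r(x_1,\ldots,x_m) = (x_1 - x_j,\ldots, x_m - x_j), \quad x_j = \min\{x_1,\ldots,x_m\}, \]
to compute
\[ \sum_{i=1}^m (x_i - x_j) = \sum_{i=1}^m x_i - m x_j = 1. \]
Combined with the two elementary constraints $\sum_i x_i \leq 1$ (because $x\in |p|$) and $x_j \geq 0$ (since $x_j$ is a barycentric coordinate), this forces $m x_j \leq 0$ and hence $x_j = 0$, which in turn gives $\sum_i x_i = 1$.

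Finally I would conclude by observing that $\sum_i x_i = 1$ places $x\in |f|$, while $x_j = 0$ places $x\in \Lambda$; thus $x \in |f|\cap \Lambda = \partial|f|$. The argument is essentially an inequality chase, so there is no real obstacle; the only mild subtlety is remembering that $r(x)\in \Lambda$ is automatic, so that the boundary condition on $r(x)$ reduces purely to the sum-equals-$1$ constraint that drives the whole computation.
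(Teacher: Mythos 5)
Your proposal is correct and follows essentially the same route as the paper: compute $\sum_i (x_i - x_j) = 1$, combine with $\sum_i x_i \leq 1$ and $x_j \geq 0$ to force $x_j = 0$ and $\sum_i x_i = 1$, and read off $x \in \Lambda \cap |f| = \partial|f|$. The only thing the paper makes explicit that you leave implicit is the observation $m \geq 1$, needed when passing from $mx_j \leq 0$ to $x_j = 0$.
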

\begin{proof}
In terms of coordinates, $\partial |f|$ is characterized by
\[ \partial |f| = \Lambda \cap |f| = \{ (x_1,\ldots, x_m) ~|~
  x_i \geq 0,~ \sum x_i =1, \text{ and }
  \exists j: x_j =0 \}. \]
Suppose $x\in |p|$ retracts onto the boundary of $|f|$, i.e.
$r(x)\in \partial |f|$.
Then the coordinates
$(x_1 - x_j, \ldots, x_m - x_j)$, $x_j = \min \{ x_i \},$ of $r(x)$ satisfy
\[ \sum_{i=1}^m (x_i - x_j) =1, \]
that is,
\[ \sum_i x_i = 1+mx_j. \]
But $x\in |p|$ ensures that $\sum x_i \leq 1$.
Consequently, $x_j =0$.
(Note that $m\geq 1$, since $m= \dim p$ and $p$ has a proper face $f$.)
This implies that $x\in \Lambda$.
Furthermore, it follows that $\sum x_i =1$, i.e. $x\in |f|$.
We conclude that $x\in \Lambda \cap |f| = \partial |f|$.
\end{proof}

Define 
$H: |p| \times I \longrightarrow |p|$
to be the straight line homotopy
\[ H(x,t) = (1-t)x + t r(x). \]   
This is well-defined as $|p|$ is convex.
Then $H$ is a homotopy from $H_0 = \id$ to $H_1 = r$
such that if $x\in \Lambda,$ then
$H(x,t) = (1-t)x + tx = x,$
that is, $H$ is a deformation retraction onto $\Lambda$.
This can be readily extended to a deformation retraction
\[ H: |K| \times I \to |K| \]
of $|K|$ onto $|K'|$ by setting $H(x,t)=x$ for all $t\in I$ and all
$x\in |K| - |p|$.
We shall refer to this $H$ as the \emph{freely orthogonal deformation
retraction} associated to $p$ and $f$.
Note that all the proper faces of $f$ are contained in $\Lambda$.
Hence $H_t$ is the identity on the proper faces of $f$.

Let $(K,C,S)$ be a layered simplicial complex.
\begin{lemma} \label{lem.scollstrat}
Let $(K,C,S) \searrow (K_S,C,S')$ be an elementary $S$-collapse
of a principal simplex $p\in S$ using the free face $f\in S$.
Then the associated freely orthogonal deformation retraction
$H: |K|\times I\to |K|$ maps
\[ H_t (|K| - |S|) = |K|- |S| \text{ and }
   H_t (|S|) \subset |S| \]
for all $t\in I$.
\end{lemma}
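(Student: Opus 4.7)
The plan is to reduce the statement to two elementary facts about the freely orthogonal deformation retraction $H$ constructed just before the lemma: $H_t$ is the identity on $|K|-|p|$, and $H_t$ maps $|p|$ into itself.

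First I would observe that since $p\in S$ and $S\subset K$ is a subcomplex (hence closed under taking faces), the closed simplex $|p|$, together with the polyhedra of all its faces, lies entirely in $|S|$; in particular $|p|\subset |S|$ and $|f|\subset |S|$. Next I would recall the explicit form of $H$: it equals the identity outside $|p|$, and on $|p|$ it is the straight-line homotopy $H(x,t)=(1-t)x+t\,r(x)$ taking values in the convex set $|p|$. Thus $H_t(|p|)\subset |p|\subset |S|$ for every $t\in I$.

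For the inclusion $H_t(|S|)\subset |S|$ I would write $|S|=|p|\cup(|S|-|p|)$. On $|p|$ the previous paragraph gives $H_t(|p|)\subset |S|$. On $|S|-|p|$ we have $|S|-|p|\subset |K|-|p|$, and there $H_t$ acts as the identity, so $H_t(x)=x\in |S|$. Hence $H_t(|S|)\subset |S|$.

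For the equality $H_t(|K|-|S|)=|K|-|S|$, I would argue that $|p|\subset |S|$ implies $|K|-|S|\subset |K|-|p|$, so $H_t$ restricts to the identity on $|K|-|S|$; this immediately gives the equality. There is no real obstacle here: the entire argument hinges on the fact that the support of the nontrivial part of $H$ is contained in $|p|$, which itself is contained in $|S|$ because $p\in S$. This is the reason $S$-collapses yield stratum-preserving deformation retractions; the analogous reasoning for $C$- and intermediate collapses will require more care, but for $S$-collapses the claim is essentially formal once the inclusion $|p|\subset |S|$ is noted.
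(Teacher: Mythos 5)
Your proof is correct and follows essentially the same route as the paper's: both arguments rest on the observation that $|p|\subset|S|$ (since $p\in S$ and $S$ is a subcomplex), that $H_t$ fixes $|K|-|p|$ pointwise, and that $H_t(|p|)\subset|p|$ by convexity. The decomposition of $|S|$ into $|p|$ and $|S|-|p|$ and the inclusion $|K|-|S|\subset|K|-|p|$ are exactly the steps the paper uses.
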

\begin{proof}
If $x\in |S|-|p|$, then
$H_t (x)=x\in |S|$ for all $t$. If, on the other hand, $x\in |p| \subset |S|$
then $H_t (x) \in |p| \subset |S|$. Hence $H_t (|S|) \subset |S|$ for all $t$.
Points $x\in |K|-|S|$ satisfy $x\in |K|-|p|$, so that
$H_t (x)=x$. This shows that $H_t (|K| - |S|) = |K|- |S|$.
\end{proof}

\begin{lemma} \label{lem.ccollstrat}
Let $(K,C,S) \searrow (K_C,C',S)$ be an elementary $C$-collapse
of a principal simplex $p\in C$ using the free face $f\in C$.
Then the associated freely orthogonal deformation retraction
$H: |K|\times I\to |K|$ maps
\[ H_t (|K| - |S|) \subset |K|- |S| \text{ and }
   H_t (|S|) = |S| \]
for all $t\in I$.
\end{lemma}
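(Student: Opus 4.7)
The plan is to mirror the structure of the preceding Lemma for $S$-collapses, with the key change being to exploit the disjointness of the subcomplexes $C$ and $S$ at the polyhedral level. The first step is to observe that because $C$ is a subcomplex, every face of $p$ lies in $C$. Since $C$ and $S$ share no simplex (including vertices), no simplex of $S$ can share a vertex with any face of $p$: if it did, the common vertex would be a $0$-simplex lying in both $C$ and $S$, contradicting disjointness. From this I would deduce $|p| \cap |S| = \varnothing$; more precisely, any $x \in |p|$ lies in the open interior of a unique face $p' \leq p$, and $p' \in C$ forces $x \notin |S|$.

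Given this disjointness, I would then run the argument essentially in parallel with the $S$-collapse case, recalling that the freely orthogonal deformation retraction $H$ is defined to be the identity on $|K| - |p|$ and to map $|p| \times I$ into $|p|$. For the $S$-equation: if $x \in |S|$, then $x \in |K| - |p|$ by the disjointness above, hence $H_t(x) = x$ for all $t$; this simultaneously shows $H_t(|S|) \subset |S|$ and that $H_t$ restricts to the identity on $|S|$, so in fact $H_t(|S|) = |S|$. For the complement: if $x \in |K| - |S|$, split into cases $x \in |p|$ and $x \notin |p|$; in the first case $H_t(x) \in |p|$, and since $|p| \cap |S| = \varnothing$ this lies in $|K| - |S|$, while in the second case $H_t(x) = x \in |K| - |S|$ directly.

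There is really no hard step here; the conceptual content is entirely absorbed by the observation $|p| \cap |S| = \varnothing$, after which the result follows from the definition of the freely orthogonal retraction. If anything, the only point to double-check carefully is the barycentric-coordinate argument for $|p| \cap |S| = \varnothing$, since at first glance one might worry about shared vertices; but subcomplex-closedness under faces together with $C \cap S = \varnothing$ rules this out cleanly. Note that the asymmetry with the $S$-collapse lemma (where one obtained only $H_t(|S|) \subset |S|$ rather than equality) disappears here precisely because in the $C$-collapse situation $H_t$ fixes $|S|$ pointwise.
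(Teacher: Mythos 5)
Your proof is correct and takes essentially the same approach as the paper. The paper's proof simply invokes $|p|\subset |C|$ and $|S|\cap |C|=\varnothing$ to conclude $|S|\subset |K|-|p|$ (where $H_t$ is the identity) and then splits the complement into the same two cases $x\in |p|$ and $x\notin |p|$; your carycentric-coordinate/carrier-simplex argument just spells out the underlying disjointness $|p|\cap |S|=\varnothing$ in more detail.
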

\begin{proof}
As $|p|\subset |C|$ and $|S|\cap |C|=\varnothing$, the complex $|S|$ is contained
in $|K|-|p|$, on which $H_t$ is the identity for all $t$.
We conclude that $H_t (|S|) = |S|$.
If $x\in |K|-(|S| \cup |p|)$, then again $H_t (x)=x\in |K|-|S|$.
If, on the other hand, $x\in |p| \subset |K|-|S|$, then
$H_t (x) \in |p| \subset |K|-|S|$.
\end{proof}

\begin{lemma} \label{lem.intermcollstrat}
Let $(K,C,S)$ be the associated layered complex of a divided complex
$(K,S^0)$.
Let $(K,C,S) \searrow (K_I,C,S)$ be an elementary intermediate collapse
of a principal simplex $p\in \IM (K,C,S)$ using the free face $f$.
Then the associated freely orthogonal deformation retraction
$H: |K|\times I\to |K|$ maps
\[ H_t (|K| - |S|) \subset |K|- |S| \text{ and }
   H_t (|S|) \subset |S| \]
for all $t\in I$.
\end{lemma}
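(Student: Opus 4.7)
The plan is to exploit condition (iv) of an intermediate collapse to pin down the geometry of $|S|\cap|p|$ inside $|p|$, and then rely on the explicit barycentric description of the retraction $r$ and the straight-line homotopy $H$ from Lemma \ref{lem.retractionxtoy}. Since $H_t$ is the identity on $|K|-|p|$, both inclusions reduce to statements about $H_t$ restricted to $|p|$. Let $v$ denote the unique vertex of $p$ not lying in $f$, so that $p$ has vertices $v,f_1,\dots,f_m$, and let $V_S$ be the set of $S^0$-vertices of $p$.

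The first key step is to use (iv) to show that $V_S$ is a proper subset of $\{f_1,\dots,f_m\}$. Indeed, if $v\in S^0$, then $\{v\}$ would be a simplex of $S$ that is a proper face of $p$ but not of $f$, contradicting (iv); and if $V_S=\{f_1,\dots,f_m\}$, then $f\in S$ would be a proper face of $p$, forcing $f<f$ by (iv), which is impossible. Consequently, $|S|\cap|p|$ equals the convex hull of $V_S$, which is a proper face of $|f|$, hence contained in $\partial|f|\subset\Lambda$. Since $H_t$ fixes $\Lambda$ pointwise, $H_t(|S|\cap|p|)=|S|\cap|p|$, which combined with the identity action outside $|p|$ yields $H_t(|S|)\subset|S|$.

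For the complementary inclusion I would compute directly in barycentric coordinates. Writing $x\in|p|$ as $(\lambda_0,\lambda_1,\dots,\lambda_m)$ relative to $(v,f_1,\dots,f_m)$, and setting $\lambda_{\min}=\min_{i\geq 1}\lambda_i$, the coordinates of $H_t(x)$ read $(\lambda_0+tm\lambda_{\min},\lambda_1-t\lambda_{\min},\dots,\lambda_m-t\lambda_{\min})$. A point of $|p|$ lies in $|S|$ if and only if $\lambda_0=0$ and $\lambda_i=0$ for every $f_i$ in the set $V_C\cap\{f_1,\dots,f_m\}$ (which is nonempty by the first step). I would then split into cases: if $\lambda_0>0$, then the new $v$-coordinate $\lambda_0+tm\lambda_{\min}\geq\lambda_0>0$, so $H_t(x)\notin|S|$; if $\lambda_0=0$ and some $\lambda_{i^*}>0$ with $f_{i^*}\in V_C$, then $\lambda_{i^*}-t\lambda_{\min}>0$ for all $t<1$.

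The main obstacle is the endpoint $t=1$, where $H_1=r$ can collapse coordinates to zero. The needed argument here is that if $r(x)\in|S|$, then $r(x)_0=\lambda_0+m\lambda_{\min}=0$ forces both $\lambda_0=0$ and $\lambda_{\min}=0$, and $r(x)_i=\lambda_i-\lambda_{\min}=0$ for every $f_i\in V_C$ forces $\lambda_i=0$ for every such index; these two conclusions together place $x$ itself in $|S|\cap|p|$, contradicting the assumption $x\in|K|-|S|$. This closes the case analysis and establishes $H_t(|K|-|S|)\subset|K|-|S|$ for all $t\in I$.
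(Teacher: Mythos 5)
Your proof is correct and follows essentially the same strategy as the paper: use condition (iv) to place $|S|\cap|p|$ inside $\partial|f|\subset\Lambda$ (on which $H_t$ is the identity), then analyze $H_t$ on $|K|-|S|$ in barycentric coordinates. The only organizational difference is in the second half: the paper splits cases by whether $r(x)=x$, invokes Lemma~\ref{lem.rxinbfimplxinbf} to rule out $r(x)\in|S|$, and shows $H_t(x)\notin|f|$ for $t>0$; you instead split cases by which barycentric coordinate of $x$ is positive and verify $H_t(x)\notin|S|$ directly, which is a slightly more self-contained but equivalent computation.
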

\begin{proof}
We shall first show that
\begin{equation} \label{equ.scappinbndryf}
|S| \cap |p| \subset \partial |f|.
\end{equation}
Indeed, given a point $x\in |S| \cap |p|,$ there is a unique
face $s\in S$ of $p$ that contains $x$ in its interior.
This face must be a proper face of $p$, since if $s=p,$ then
$p\in S$, contradicting $p\in \IM (K,C,S)$.
By condition (iv) for intermediate collapses,
$s$ is a proper face of $f$. Hence 
$x\in |s| \subset \partial |f|$, establishing (\ref{equ.scappinbndryf}).

As $\partial |f| \subset \Lambda$ and $r$ is a retract onto $\Lambda$,
it follows in particular that $r(x)=x$ for points $x\in |S| \cap |p|$
and hence that $H_t (x) = (1-t)x+tr(x)=x$ for such $x$ and all $t$.
So $H_t (|S|\cap |p|) \subset |S|$ for all $t$.
If $x\in |S|-|p|$, then $H_t (x)=x\in |S|$ by definition.
This proves $H_t (|S|)\subset |S|$ for all $t$.

It remains to verify $H_t (|K|-|S|) \subset |K|-|S|.$
Let $x$ be a point in $|K|-|S|$.
If $x\not\in |p|$, then $H_t (x)=x\in |K|-|S|$.
So suppose that $x\in |p|$. If $r(x)=x$, then again $H_t (x)=x$ and 
we are done. So assume that $r(x)\not= x$.
Suppose it were then true that $r(x)\in |S|$.
Then $r(x) \in |p|\cap |S|$ and hence $r(x)\in \partial |f|$ by (\ref{equ.scappinbndryf}).
Lemma \ref{lem.rxinbfimplxinbf} implies that $x\in \partial |f|$.
As $\partial |f| \subset \Lambda$ and $r$ is a retract onto $\Lambda$,
we deduce that $r(x)=x$, contradicting $r(x)\not= x$.
Therefore $r(x)\not\in |S|$. Thus $H_t (x)\not\in |S|$ for $t=0,1$.

We claim that if $t>0$, then $H_t (x)\not\in |f|$.
By contradiction: Suppose $(1-t)x + tr(x) \in |f|$, so that
\[ \sum_i ((1-t)x_i + t(x_i - x_j))=1,~ x_j = \min \{ x_1,\ldots, x_m \}. \]
Note that $x_j >0$, since we know that $x\not\in \Lambda$
(as $r(x)\not= x$).
It follows that
\[ \sum_i x_i = 1 + mtx_j. \]
Since $t>0,$ $m\geq 1$ and $x_j >0$, this implies $\sum x_i >1$, 
contradicting $\sum x_i \leq 1$ ($x\in |p|$).
Thus $H_t (x)\not\in |f|$ as claimed.
Now if $H_t (x)$ were in $|S|$ for $t>0$, then it would have to be
in $\partial |f| \subset |f|$ by (\ref{equ.scappinbndryf}), which 
we have just proved to be impossible.
Hence $H_t (x)\not\in |S|$, as was to be shown.
\end{proof}

\section{Stratified Formal Deformations, Homotopy Type and Intersection Homology}
\label{sec.formaldefhtpytypeih}

Our main result relates the layered formal deformation type of a complex 
to the stratified homotopy type of associated filtered spaces.
As a corollary, we conclude that the intersection homology is
preserved by layered formal deformations.
In particular, the intersection homology of any two stratified spines
(which may well not be homeomorphic) will be isomorphic. 
Recall that for ease of exposition we will illustrate our results 
only for filtrations of type (\ref{equ.twostrataspaces}).
\begin{lemma} \label{lem.ellaycollstrathe}
Let $(K,C,S)$ and $(K',C',S')$ be the associated layered simplicial complexes
of divided simplicial complexes, and suppose that
the polyhedra $X=|K|$ and $X'=|K'|$ are filtered spaces
with respective singular sets $\Sigma = |S|,$ $\Sigma' = |S'|$
whose formal codimensions coincide, $\codim_X \Sigma = \codim_{X'} \Sigma'$.
If $(K',C',S')$ is obtained from $(K,C,S)$ by an
elementary layered collapse,
then $(X,\Sigma)$ and $(X',\Sigma')$ are stratified homotopy equivalent.
\end{lemma}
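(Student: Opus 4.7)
The plan is to construct the stratified homotopy equivalence explicitly using the freely orthogonal deformation retraction from Section \ref{sec.freelyorthogdefretr}. I would split into the three mutually exclusive types of elementary layered collapse ($S$-, $C$-, or intermediate) and, denoting by $p$ and $s$ the principal simplex and its free face that are removed, consider the associated retraction $H \colon |K|\times I \to |K|$ onto $|K'|$. Taking $r := H_1 \colon X \to X'$ together with the inclusion $i \colon X' \hookrightarrow X$ as the candidate pair, the deformation retraction property of $H$ already gives $r\circ i = \id_{X'}$ for free and exhibits $H$ as a continuous homotopy from $H_0=\id_X$ to $H_1=i\circ r$.

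The key work is to check, in each of the three cases, that $r$, $i$, and $H_t$ are stratified; the codimension-preservation is then supplied directly by the hypothesis $\codim_X\Sigma=\codim_{X'}\Sigma'$. Lemmas \ref{lem.scollstrat}, \ref{lem.ccollstrat}, and \ref{lem.intermcollstrat} are tailored exactly to each case and deliver $H_t(|S|)\subseteq |S|$ and $H_t(|K|-|S|)\subseteq |K|-|S|$ for all $t\in I$, so that $H$ is already a stratified homotopy. To upgrade this to stratification of $r$ and $i$ one additionally needs the set-theoretic identification
\[
|K'|\cap |S| = |S'|,
\]
which I would prove by tracking the unique open carrier simplex of a point in $|K|$. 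In the $C$-collapse case $\{s,p\}\cap S=\varnothing$ by disjointness of $C$ and $S$, and in the intermediate case the same holds by Lemma \ref{lem.s1iss}; in both cases $|S|\subseteq |K'|$ and $S'=S$, so the identity is immediate. In the $S$-collapse case both $s$ and $p$ lie in $S$, and the identity follows from a short carrier-simplex argument showing $S\cap K' = S-\{s,p\} = S'$.

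The delicate step, which I expect to be the main obstacle, is assembling the intermediate case correctly: there $p\in\IM(K,C,S)$, so $|p|$ meets both $|S|$ and its complement nontrivially, and it is not a priori clear that the straight-line projection onto $\Lambda=\partial|p|-\interi|s|$ does not push a point of $|p|\cap(|K|-|S|)$ into $|S|$. This is precisely what Lemma \ref{lem.intermcollstrat} takes care of, via condition (iv) of intermediate collapses and Lemma \ref{lem.rxinbfimplxinbf}. Once the three cases are assembled, $(i,r)$ forms the desired stratified homotopy equivalence: $r\circ i = \id_{X'}$ is trivially stratified-homotopic to $\id_{X'}$, while $H$ is a stratified homotopy between $i\circ r = H_1$ and $\id_X=H_0$, as required.
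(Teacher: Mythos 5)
Your proposal is correct and follows essentially the same route as the paper's proof: both use the freely orthogonal deformation retraction $H$ with $r = H_1$ and the inclusion $i$, invoke Lemmas \ref{lem.scollstrat}, \ref{lem.ccollstrat}, \ref{lem.intermcollstrat} to establish the stratified nature of $H$, and reduce the check that $i$ and $r$ map singular sets and their complements correctly to the simplicial-level identity $S' = K' \cap S$ (which the paper records as equation (\ref{equ.sk1kk1capsk}) covering all three collapse types).
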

\begin{proof}
Let $(K,C,S) \searrow (K',C',S')$ be an elementary layered collapse
of a principal simplex $p$ using the free face $f$ of $p$.
Let $H: |K| \times I \to |K|$ be the freely orthogonal deformation
retraction associated to $p$ and $f$, satisfying
$H_1 (|K|) \subset |K'|$; on $|p|$, $H_1$ is given by
the retraction $r$ of Section \ref{sec.freelyorthogdefretr}.
We claim that the inclusion
\[ (|K'|,|S'|) \stackrel{i}{\hookrightarrow} (|K|,|S|) \]
is a stratified homotopy equivalence with stratified  homotopy inverse
\[ (|K|,|S|) \stackrel{r}{\longrightarrow} (|K'|,|S'|). \]
To prove the claim, we establish first that both
$i$ and $r=H_1$ are codimension-preserving stratified maps.
First, a preliminary remark.
In the case of an $S$-collapse,
\begin{equation} \label{equ.sk1kk1capsk} 
S' = K' \cap S 
\end{equation}
by (\ref{equ.sikicaps});
in the case of a $C$-collapse,
$S' = S.$
The latter equality holds also in the case of an intermediate
collapse, by Lemma \ref{lem.s1iss}.
When $S' = S$, then
$K' \cap S = K' \cap S' = S'.$
Hence (\ref{equ.sk1kk1capsk}) holds in all three cases.
Now the inclusion $i: |K'| \hookrightarrow |K|$ maps
$|S'|$ to $|K| \cap |S'|$.
Using (\ref{equ.sk1kk1capsk}),
\[ |K| \cap |S'| = |K| \cap |K'| \cap |S|
\subset |S|, \]
so $i$ maps the singular sets correctly.
Since
$H_1 (|K|) \subset |K'|,$ and $H_t (|S|) \subset |S|$
for all $t$, we have
\[ r (|S|) \subset |K'| \cap |S| = |S'|, \]
so $r$ maps the singular sets correctly as well.

If $x\in |K'| - |S'|,$ then (\ref{equ.sk1kk1capsk})
shows that $x\not\in |S|$.
Thus $i (|K'| - |S'|) \subset |K| - |S|$ as claimed.
Let $x$ be a point in $|K| - |S|,$ and 
suppose that $r(x)\in |S'|$.
Then (\ref{equ.sk1kk1capsk}) shows that $r(x)\in |S|$.
But by Lemmas 
\ref{lem.scollstrat}, \ref{lem.ccollstrat}, and \ref{lem.intermcollstrat},
$r(x) = H_1 (x)\in |K|-|S|$,
a contradiction since $|S| \subset |K|$.
Thus $r(x)\not\in |S'|$ so that
$r(|K| - |S|) \subset |K'| - |S'|$ as required.
We have shown that $i$ and $r$ are stratified maps.
They are codimension-preserving by assumption.

Since $r$ is a retraction, $r\circ i = \id_{|K'|}$.
It remains to be shown that 
$i\circ r$ is stratified homotopic to the identity on $|K|$.
A stratified homotopy is given by the above
freely orthogonal deformation
retraction $H$:
By Lemmas 
\ref{lem.scollstrat}, \ref{lem.ccollstrat}, and \ref{lem.intermcollstrat},
$H$ maps $|S| \times I$ into $|S|$ and $(|K|-|S|)\times I$ into
$|K|-|S|$ in all three cases ($S$-, $C$-, and intermediate collapse).
\end{proof}

\begin{thm} \label{thm.main}
Let $(K,C,S)$ and $(K',C',S')$ be the associated layered simplicial complexes
of divided simplicial complexes, and suppose that
the polyhedra $X=|K|$ and $X'=|K'|$ are filtered spaces
with respective singular sets $\Sigma = |S|,$ $\Sigma' = |S'|$
whose formal codimensions coincide, $\codim_X \Sigma = \codim_{X'} \Sigma'$.
If there exists a layered formal deformation between
$(K,C,S)$ and $(K',C',S')$,
then $(X,\Sigma)$ and $(X',\Sigma')$ are stratified homotopy equivalent.
\end{thm}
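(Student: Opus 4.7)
The plan is to reduce the theorem, by a straightforward induction on the length of the deformation, to iterated application of Lemma \ref{lem.ellaycollstrathe} followed by Lemma \ref{lem.compstrathes}. A layered formal deformation is by definition a finite sequence
\[ (K,C,S) = (K_0,C_0,S_0) \to (K_1,C_1,S_1) \to \cdots \to (K_m,C_m,S_m) = (K',C',S'), \]
where each arrow is an elementary layered collapse or expansion. I would begin by equipping each intermediate pair $(|K_i|,|S_i|)$ with the filtered-space structure that declares $\codim |S_i|$ to equal the common value $\codim_X \Sigma = \codim_{X'} \Sigma'$, so that codimension-preservation holds along the entire chain.

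Next I would verify that each intermediate triple $(K_i, C_i, S_i)$ is again the associated layered complex of a divided complex, so that the hypotheses of Lemma \ref{lem.ellaycollstrathe} are met at every step. For $C$- and $S$-collapses this is immediate, since only simplices already in $C$ or $S$ are removed and no vertex is deleted; the associated layered structure with the same $S^0$ is retained. For intermediate collapses this is precisely the content of Lemma \ref{lem.k1csassoctok1s0}. Elementary expansions are inverses of elementary collapses, so they preserve the divided-complex structure in the reverse direction.

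Having secured these hypotheses, I would apply Lemma \ref{lem.ellaycollstrathe} to each arrow. When the arrow is a layered collapse, the lemma directly yields a stratified homotopy equivalence $(|K_i|,|S_i|)\simeq (|K_{i+1}|,|S_{i+1}|)$. When the arrow is an expansion, the reverse arrow is an elementary layered collapse to which the lemma applies; since stratified homotopy equivalence is manifestly symmetric (swap the roles of $f$ and $g$ in the definition), the original arrow also produces a stratified homotopy equivalence in the desired direction. Composing the resulting chain by repeated use of Lemma \ref{lem.compstrathes} then yields a single stratified homotopy equivalence between $(X,\Sigma)$ and $(X',\Sigma')$.

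The main obstacle is bookkeeping rather than anything conceptually deep: one must confirm that the formal codimension assignment can be made consistently throughout the deformation, and that at each step the intermediate triple genuinely comes from a divided complex so that Lemma \ref{lem.ellaycollstrathe} is applicable. All of the analytic work, namely the construction of a freely orthogonal deformation retraction and the verification that it maps $\Sigma$ into $\Sigma$ and its complement into its complement (for each of the three types of elementary moves), has already been carried out in Section \ref{sec.freelyorthogdefretr} and packaged into Lemma \ref{lem.ellaycollstrathe}, so the proof of the present theorem reduces to the inductive composition argument sketched above.
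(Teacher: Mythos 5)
Your proof follows essentially the same route as the paper's: declare a uniform formal codimension $\codim|S_i|=\codim_X\Sigma$ along the chain, apply Lemma \ref{lem.ellaycollstrathe} to each elementary step (for an expansion, apply the lemma to the reverse arrow and use that stratified homotopy equivalence is a symmetric relation), and compose via Lemma \ref{lem.compstrathes}. One small slip in a side remark: a free face in an $S$- or $C$-collapse can be a $0$-simplex, so a vertex \emph{can} be deleted and $S^0$ may shrink; this is harmless, since the resulting triple is still the associated layered complex of a (possibly smaller) divided complex, which is all that Lemma \ref{lem.ellaycollstrathe} requires.
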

\begin{proof}
Let
\[ (K,C,S) = (K_0,C_0,S_0)
    \to (K_1,C_1,S_1)
    \to \cdots
    \to (K_m,C_m,S_m) = (K',C',S') \]
be a layered formal deformation between $(K,C,S)$ and $(K',C',S')$,    
where each arrow indicates an elementary layered collapse or 
layered expansion.
The polyhedral pair
$(X_i, \Sigma_i) := (|K_i|, |S_i|),$
$i=0,\ldots, m,$ becomes a filtered space by declaring the
\emph{formal} codimensions of the singular sets to be
$\codim_{X_i} \Sigma_i := \codim_X \Sigma$.
Then Lemma \ref{lem.ellaycollstrathe}
is applicable to $(K_i,C_i,S_i) \to (K_{i+1},C_{i+1},S_{i+1})$
and yields a stratified homotopy equivalence
$(X_i, \Sigma_i) \simeq (X_{i+1}, \Sigma_{i+1})$ for
every $i=0,\ldots, m-1$.
The composition of these stratified homotopy equivalences is
a stratified homotopy equivalence
\[ (X,\Sigma) = (X_0, \Sigma_0) \simeq (X_m, \Sigma_m) 
    = (X',\Sigma') \]
by Lemma \ref{lem.compstrathes}.
\end{proof}

As a special case of the theorem, we deduce that any two
stratified spines of a layered complex lie in the same 
classical stratified homotopy type:
\begin{cor} \label{cor.stratifiedspines}
Let $(K,C,S)$ be the associated layered simplicial complex of
a divided simplicial complex.
If $(K', C', S')$ and $(K'', C'', S'')$ are layered spines of $(K,C,S)$
whose polyhedra $X=|K'|$ and $Y=|K''|$ are filtered spaces 
with respective singular sets $\Sigma_X = |S'|,$ $\Sigma_Y = |S''|$
whose formal codimensions coincide,
then the stratified spines 
$(X,\Sigma_X)$ and $(Y,\Sigma_Y)$ are stratified homotopy equivalent.
\end{cor}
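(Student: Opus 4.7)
The plan is to observe that the hypothesis that both $(K', C', S')$ and $(K'', C'', S'')$ are layered spines of the same layered complex $(K, C, S)$ automatically produces a layered formal deformation between them, and then to invoke Theorem \ref{thm.main} directly.

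More precisely, I would proceed as follows. By the definition of a layered spine, there is a layered collapse
\[ (K, C, S) = (L_0, D_0, T_0) \searrow (L_1, D_1, T_1) \searrow \cdots \searrow (L_a, D_a, T_a) = (K', C', S'), \]
each arrow being an elementary $S$-, $C$-, or intermediate collapse. Likewise, there is a layered collapse
\[ (K, C, S) = (M_0, E_0, U_0) \searrow (M_1, E_1, U_1) \searrow \cdots \searrow (M_b, E_b, U_b) = (K'', C'', S''). \]
Reversing the direction of the first sequence turns each elementary layered collapse into an elementary layered expansion, producing
\[ (K', C', S') \nearrow (L_{a-1}, D_{a-1}, T_{a-1}) \nearrow \cdots \nearrow (L_0, D_0, T_0) = (K, C, S). \]
Concatenating this ascending sequence with the descending sequence to $(K'', C'', S'')$ yields, by the very definition recalled in Section \ref{sec.stratspines}, a layered formal deformation from $(K', C', S')$ to $(K'', C'', S'')$.

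Next I would verify that Theorem \ref{thm.main} applies to this deformation. The endpoints are layered complexes coming from divided complexes (since layered collapses of associated layered complexes remain associated to divided complexes, as ensured by Lemma \ref{lem.k1csassoctok1s0} in the intermediate case and trivially in the $S$- and $C$- cases where the vertex set of one layer is untouched). The polyhedra $X = |K'|$ and $Y = |K''|$ are filtered by their singular sets $\Sigma_X = |S'|$ and $\Sigma_Y = |S''|$, and by hypothesis the formal codimensions $\codim_X \Sigma_X$ and $\codim_Y \Sigma_Y$ agree. Thus all hypotheses of Theorem \ref{thm.main} are met, and the theorem gives the desired stratified homotopy equivalence $(X, \Sigma_X) \simeq (Y, \Sigma_Y)$.

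There is essentially no obstacle beyond unwinding definitions; the substantive content is already packaged into Theorem \ref{thm.main} and its supporting lemmas on freely orthogonal deformation retractions. The only mildly delicate point is bookkeeping: one must check that each intermediate complex in the concatenated deformation is still an associated layered complex of a divided complex, so that the theorem's hypothesis is satisfied uniformly along the deformation. This is exactly the content recorded in Lemma \ref{lem.k1csassoctok1s0} (and its analogs for $S$- and $C$-collapses), so no further work is needed.
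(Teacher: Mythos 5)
Your proof is correct and follows exactly the route the paper intends (the paper gives no explicit proof, presenting the corollary as an immediate special case of Theorem \ref{thm.main}): reverse one layered collapse into a layered expansion, concatenate through $(K,C,S)$ to obtain a layered formal deformation between the two layered spines, and invoke Theorem \ref{thm.main} with the matching codimension hypothesis. Your bookkeeping remark that every stage remains an associated layered complex of a divided complex is the right point to flag, though strictly speaking it is already built into the paper's definition of elementary layered collapse, which requires both endpoints of each elementary step to be associated layered complexes.
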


Theorem \ref{thm.main} has the following consequence for 
the combinatorial deformation stability of
singular intersection homology:
\begin{cor} \label{cor.ihstable}
Let $(K,C,S)$ and $(K',C',S')$ be the associated layered simplicial complexes
of divided simplicial complexes, and suppose that
the polyhedra $X=|K|$ and $X'=|K'|$ are filtered spaces
with respective singular sets $\Sigma = |S|,$ $\Sigma' = |S'|$
whose formal codimensions coincide.
If there exists a layered formal deformation between
$(K,C,S)$ and $(K',C',S')$,
then
$IH^{\bar{p}}_* (X) \cong IH^{\bar{p}}_* (Y)$ for every $\bar{p}$.
\end{cor}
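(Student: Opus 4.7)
The plan is to derive this as an immediate consequence of Theorem \ref{thm.main} together with Proposition \ref{prop.ihstratheinv}, both of which were established earlier in the paper. The corollary is essentially a packaging of these two results: the first converts the combinatorial hypothesis (existence of a layered formal deformation) into a topological/homotopical one (existence of a stratified homotopy equivalence), and the second converts that homotopical statement into an algebraic one (isomorphism of intersection homology groups).

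Concretely, I would proceed in two steps. First, apply Theorem \ref{thm.main} directly to the given layered formal deformation between $(K,C,S)$ and $(K',C',S')$; the hypotheses of that theorem (associated layered complexes of divided complexes, and coincidence of the formal codimensions $\codim_X \Sigma = \codim_{X'} \Sigma'$) are exactly those assumed here, so no additional verification is required. This produces a stratified homotopy equivalence $f: (X,\Sigma) \to (X',\Sigma')$. Second, I would invoke Proposition \ref{prop.ihstratheinv} applied to $f$ in order to conclude that the induced maps $f_*: IH_i^{\bar{p}}(X) \to IH_i^{\bar{p}}(X')$ are isomorphisms for every degree $i$ and every perversity $\bar{p}$.

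There is no substantive mathematical obstacle to overcome: the technical content has already been absorbed into Section \ref{sec.freelyorthogdefretr} (where the freely orthogonal deformation retractions were shown to be stratum-preserving for each of the three types of elementary layered collapse) and into the proof of Theorem \ref{thm.main} (where these retractions were concatenated, via Lemma \ref{lem.compstrathes}, to give a single stratified homotopy equivalence along a layered formal deformation). The homotopy invariance of singular intersection homology is imported from the literature as Proposition \ref{prop.ihstratheinv}. The only minor point worth noting is a typographic one: the conclusion of the statement writes $IH^{\bar{p}}_*(Y)$, which should be read as $IH^{\bar{p}}_*(X')$ (with $Y = X'$), so that the asserted isomorphism is $IH^{\bar{p}}_*(X) \cong IH^{\bar{p}}_*(X')$ for all $\bar{p}$.
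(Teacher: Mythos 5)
Your proof matches the paper's argument exactly: the paper's proof reads ``This follows from Theorem \ref{thm.main} in view of Proposition \ref{prop.ihstratheinv},'' which is precisely the two-step deduction you give. Your observation that $IH^{\bar p}_*(Y)$ should be read as $IH^{\bar p}_*(X')$ is also correct.
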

\begin{proof}
This follows from Theorem \ref{thm.main} in view
of Proposition \ref{prop.ihstratheinv}.
\end{proof}
To compute the singular intersection homology groups algorithmically
via simplicial intersection chains, one may use
Propositions \ref{prop.simplihcomputesplih} 
and \ref{prop.plihcomputessingih}.
From the point of view of a given filtered Vietoris-Rips type
complex $K$ associated to data points near a filtered space $X$,
our results lead to conditions under which the complex $K$ itself
can be used to compute the intersection homology of $X$:
\begin{cor} \label{cor.ihfromvr}
Let $(K,S^0)$ be a divided simplicial complex and $(X,\Sigma)$
a filtered space. (In practice, $K$ could be a
Vietoris-Rips, \v{C}ech, etc. type complex associated to points near
$X$, with points of $S^0$ near $\Sigma$.)
If there exists a layered formal deformation from the associated
layered complex $(K,C,S)$ to a layered  complex $(K',C',S')$
whose filtered space $(|K'|,|S'|)$, with $\codim |S'|=\codim_X \Sigma$, 
is stratified homotopy equivalent
to $(X,\Sigma)$, then, taking formal codimension 
$\codim_{|K|} |S| := \codim |S'|,$
the intersection homology of the filtered space
$(|K|,|S|)$ computes the intersection homology
of $(X,\Sigma)$.
\end{cor}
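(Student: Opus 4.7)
The plan is to chain together two stratified homotopy equivalences and then invoke the homotopy invariance of singular intersection homology. First, I would apply Theorem \ref{thm.main} to the layered formal deformation from $(K,C,S)$ to $(K',C',S')$. Since we are assigning the formal codimension $\codim_{|K|} |S| := \codim |S'|$, the formal codimensions of the singular sets of the two filtered spaces $(|K|,|S|)$ and $(|K'|,|S'|)$ agree, so the theorem's hypothesis is met and yields a stratified homotopy equivalence
\[ (|K|,|S|) \simeq (|K'|,|S'|). \]

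Next, the hypothesis of the corollary supplies a stratified homotopy equivalence $(|K'|,|S'|) \simeq (X,\Sigma)$. By Lemma \ref{lem.compstrathes}, the composition of two stratified homotopy equivalences is again a stratified homotopy equivalence, so I would obtain a stratified homotopy equivalence
\[ (|K|,|S|) \simeq (X,\Sigma). \]

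Finally, I would invoke Proposition \ref{prop.ihstratheinv}, which guarantees that a stratified homotopy equivalence induces isomorphisms on singular intersection homology for every perversity $\bar{p}$, giving
\[ IH^{\bar{p}}_* (|K|,|S|) \cong IH^{\bar{p}}_* (X,\Sigma) \]
for all $i$ and all $\bar{p}$. There is no real obstacle in the argument; the only bookkeeping point to verify is that the codimension conventions match across the chain of equivalences, which is arranged precisely by the definition $\codim_{|K|} |S| := \codim |S'|$ together with the hypothesis $\codim |S'| = \codim_X \Sigma$. Thus the corollary follows as a direct consequence of Theorem \ref{thm.main}, Lemma \ref{lem.compstrathes}, and Proposition \ref{prop.ihstratheinv}, with no further ingredients required.
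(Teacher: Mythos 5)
Your argument is correct and is exactly the intended derivation: the paper gives no explicit proof of Corollary \ref{cor.ihfromvr}, but it follows precisely as you describe — by combining Theorem \ref{thm.main} (applicable once $\codim_{|K|} |S|$ is set equal to $\codim |S'| = \codim_X \Sigma$) with the hypothesized equivalence $(|K'|,|S'|) \simeq (X,\Sigma)$ via Lemma \ref{lem.compstrathes}, and then applying Proposition \ref{prop.ihstratheinv}. This mirrors the proof the paper does spell out for the preceding Corollary \ref{cor.ihstable}, with the extra composition step.
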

As pointed out earlier, these codimensions are not \emph{a priori}
known, but may in fact be revealed by spines.
The corollary shows in particular that if a Vietoris-Rips type complex $K$
has a stratified spine which is, say, stratified homeomorphic to $X$,
then $IH^{\bar{p}}_* (X)$ may already be computed from 
$IH^{\bar{p}}_* (K)$ by using
the appropriate codimension.

\section{Implementation}
\label{sec.implementation}

Given a set of data points, we used standard Delaunay-Vietoris-Rips
methods to generate simplicial complexes, but 
our stratified collapse methods are applicable to any
filtered polyhedron and thus one could in addition investigate
the behavior of \v{C}ech complexes, witness complexes, etc. 
The Delaunay-Vietoris-Rips complex is the restriction of the Vietoris-Rips 
complex to simplices of the Delaunay triangulation. 
Using Delaunay triangulations to generate simplicial complexes from 
sets of data points is a common approach in topological data analysis, 
see e.g. \cite{alpha}, \cite{harish}, \cite{baueredelsbrunner}.
To illustrate stratified formal deformations, we focus here
on stratified spines.
A possible pseudocode sequencing of the three types of 
elementary layered collapse
operations is shown in Algorithm \ref{alg.layspine}.
\begin{algorithm}
\begin{flushleft} 
 \textbf{Input:} \text{$(K,C,S)$ layered 
   complex of a divided simplicial complex}\\ 
   \textbf{Output:} \text{$(K,C,S)$ layered complex 
       with no further layered collapses possible} 
\end{flushleft}        
  \caption{LayeredSpine$(K, C, S)$}
  \begin{algorithmic}[1]	
\While{$S$-collapse possible in $(K,S)$}
	\State $(K,S) \gets$ Collapse($K$,$S$)
\EndWhile

\While{$C$-collapse possible in $(K,C)$}
	\State $(K,C) \gets$ Collapse($K$,$C$)
\EndWhile 
\State $IM \gets$ IntermediateSimplices$(K,C,S)$
\While{intermediate collapse possible in $(K,IM,S)$}
	\State $(K,IM) \gets$ IMCollapse($K$,$IM$,$S$)
\EndWhile

\While{$C$-collapse possible in $(K,C)$}
	\State $(K,C) \gets$ Collapse($K$,$C$)
\EndWhile 

\State \textbf{return} $(K,C,S)$
  \end{algorithmic}
  \label{alg.layspine}
\end{algorithm}
The code blocks 1--3 and 4--6 can be interchanged, as
$S$-collapses and $C$-collapses are commuting operations.
This shows in particular that after step 6, no new $S$-collapses
can become possible.
After step 10, no $S$-collapses can become possible
by Proposition \ref{prop.nosintermthennos}. But $C$-collapses
may become possible after intermediate collapses, as
Example \ref{expl.intermccoll} shows. Hence the code block 11--13 is required.
After step 13, no $S$-collapse can be possible, for if it were, then
it could be commuted to be executed between line 10 and 11, but we
already know that after step 10, no $S$-collapse is possible.
Furthermore, after step 13, no intermediate collapse is possible
by Proposition \ref{prop.nointermcthennointerm}.
Therefore, after step 13, the resulting layered simplicial complex
is in fact a layered spine.
In step 7, the set $IM$ of intermediate simplices of $(K,C,S)$ is computed,
which is straightforward --- include all simplices of $K$ that have at
least one vertex in $S$ and at least one vertex in $C$.
According to Lemma \ref{lem.impresundercscoll},
$IM$ can be computed earlier, but it is of course advantageous to compute it
as late as possible, since the previous collapses reduce the
search space.
The function Collapse($K$,$L$) executes 
elementary collapses in $K$ only of simplices contained in a subcollection 
$L \subset K$. This is suited for $S$- and $C$-collapses. 
Intermediate collapses require a different treatment, detailed in 
Algorithm \ref{alg.coll}.
We included only the pseudocode for intermediate collapses. For $S$- and 
$C$-collapses the code looks almost identical, excluding step 5, 
which tests
condition (iv) of an intermediate collapse by calling the
function isAdmissible($S$,$s$,$p[0]$). Algorithm \ref{alg.coll}
makes implicit use of the fact, established earlier
(in Lemmas \ref{lem.s1iss} and  \ref{lem.c1isc}), 
that a free face of a principal
intermediate simplex such that (iv) holds must itself be intermediate.
Conversely, any coface (in particular a principal one) 
of an intermediate simplex is obviously intermediate itself.
The function Princ$(IM,s,K)$ searches for cofaces in $IM$ of a simplex
$s$ that are principal in $K$. 
(By the previous remark, one need not search in all of $K$,
only in $IM$.)
If it finds none or more than one, it returns the empty
list, otherwise a list $p$ of length $1$ 
containing the unique principal coface $p[0]$ (in which case $s$
is free).
If desired, it is after execution of 
\ref{alg.layspine}
algorithmically possible to check whether
the polyhedron of the layered spine is a pseudomanifold:
Let $n$ be the highest dimension of a simplex in the complex.
Then check whether every $(n-1)$-simplex is the face of precisely
two $n$-simplices and whether every simplex is the face of some $n$-simplex. \\

\begin{algorithm}
  \hspace*{\algorithmicindent} 
\begin{flushleft} 
 \textbf{Input:} \text{$K$ simplicial complex, 
    $S$ subcomplex of $K$, $IM$ intermediate simplices} \\
   \textbf{Output:} \text{$K$ simplicial complex, $IM$ intermediate simplices} 
\end{flushleft}   
  \caption{IMCollapse$(K,IM,S)$}
  \begin{algorithmic}[1]

    \State $I \gets IM$  
    	\For{$s$ \textbf{in} $I$}    	
    		\State $p \gets$ Princ$(IM,s,K)$ 
    		\If{ nonempty$(p)$}
    			\If{isAdmissible$(S,s,p[0])$}
					\State Remove $p[0]$ from $K$
					\State Remove $p[0]$ from $IM$
					\State Remove $s$ from $K$
					\State Remove $s$ from $IM$\\
					\Comment At this point, the new $IM$ is indeed
					           the set \\
					\Comment of intermediate simplices of the new $K$
					           by (\ref{equ.intermediacypresunderintermcoll}). 
				\EndIf    	
    		\EndIf

		\EndFor    
    
\State \Return $(K,IM)$
  \end{algorithmic}
  \label{alg.coll}
\end{algorithm}

Furthermore, there exist algorithms to compute (simplicial) intersection homology,
even persistent intersection homology, \cite{bendichharer}, \cite{blrs}.
We followed these to code the calculation of simplicial intersection
Betti numbers. Persistent intersection homology has been implemented by
Bastian Rieck in his \texttt{Aleph} package.
In principle, calculating intersection homology groups uses the same
matrix operations that may be used to compute ordinary simplicial homology. 
The difference lies in the process that sets up the simplicial intersection 
chain complex. The definition of $IC^{\bar{p}}_{*}(-) \subset C_{*}(-)$ 
requires us to check which simplicial chains are $\bar{p}$-allowable for a 
given perversity $\bar{p}$. Note that it is not enough to verify allowability 
on the simplex level because we want to identify all $\bar{p}$-allowable 
simplicial chains.
We describe an algorithm that can be used to generate the intersection chain 
complex from a given simplicial chain complex. This is taken from 
\cite{bendichharer} 
in which an algorithm was introduced to compute persistent intersection 
homology with $\intg_2$ coefficients. We will here only describe 
those parts of the algorithm that solve the problem of identifying the intersection 
chains. 

Let $K$ be a simplicial complex and $\bar{p}$ be an arbitrary perversity. 
Note that we are over $\intg_2$ and thus orientations 
are not an issue. Let $IS_k^{\bar{p}}(K)$ denote the collection of all 
$\bar{p}$-allowable $k$-simplices of $K$. Next, for every $k$, 
we choose an ordering of 
$IS_{k}^{\bar{p}}(K)$ such that 
$IS_{k}^{\bar{p}}(K) = \{{\sigma^{(k)}_0,\dots,
\sigma^{(k)}_{r_k}, \sigma^{(k)}_{r_k+1},\dots, \sigma^{(k)}_{m_{k}}}\},$ 
where $\sigma^{(k)}_0,\dots,\sigma^{(k)}_{r_k}$ are all $\bar{p}$-allowable 
$k$-simplices of $K$. 
Set up an incidence matrix $M$ over $\intg_2$ with entries
\[
\begin{cases}
M_{ij} = 1 \mbox{, if } \sigma^{(k-1)}_{i} \text{ is a face of } 
        \sigma^{(k)}_{j}, \\
M_{ij} = 0 \mbox{, else,}
\end{cases}
\]
$i=0,\ldots, m_{k-1},$
$j=0,\ldots, r_k.$
The columns that have nonzero entries 
below the $r$-th row represent $\bar{p}$-allowable $k$-simplices whose boundaries contain not $\bar{p}$-allowable $(k-1)$-simplices and therefore do not 
represent a $\bar{p}$-allowable elementary chain.
By elementary column transformations from left to right, we want to reduce 
as much of the rows below $r$ as possible to form allowable chains. This is 
done by adding up columns with the same value in rows with higher index than $r$.
The pseudocode of Algorithm \ref{alg.MatRed} describes how this reduction 
can be realized on a computer.
\begin{algorithm}
  \caption{MatrixReduction$(M,r)$}
  \begin{flushleft}
   \textbf{Input:} \text{$M$ binary incidence matrix, $r$ row index} \\
   \textbf{Output:} \text{$M$ matrix in reduced form} 
  \end{flushleft}

  \begin{algorithmic}[1]
  \State $ncol \gets \operatorname{length}(M[0,:])$ \Comment{Number of columns}
  	\For{$j :=  ncol-1$ \textbf{downto} $1$}
  		\While{$\exists i < j$ with $\mbox{low}(M,i) = \mbox{low}(M,j)$ 
  		            and low$(M,j)>r$}  
  				\State $M[:,j] \leftarrow M[:,j]+M[:,i]$
  				
		 \EndWhile
   	\EndFor 
  \State \Return{$M$}
  \end{algorithmic}
  \label{alg.MatRed}
\end{algorithm}
The function low$(M,j)$ returns the index of the lowest 
nonzero entry in the $j$-th column of a given matrix $M$, in case it exists. 
The function returns $-1$, if the column only contains zeros.
Furthermore, 
it is possible to record the column transformations executed during a 
reduction process to determine a basis for the intersection chain complex 
in every degree. Now, deleting all columns $i$ in $M$ with low$(M,i) > r$ 
and all rows below $r$, one obtains matrices whose ranks can be
used to calculate the intersection Betti numbers. 
Our Python code, both for the computation of stratified spines and
intersection Betti numbers, is available at
\url{https://github.com/BanaglMaeder/layered-spines.git}.
It was not our first priority to optimize the code for efficiency,
and certainly several refinements in this direction are possible.

\section{Examples and Evaluation}
\label{sec.examples}

We illustrate stratified formal deformations and associated
intersection homology groups by algorithmically
computing several stratified spines of Vietoris-Rips type complexes
associated to data point samples near given spaces.
Our first example concerns the cone on a circle. Such a cone is
topologically a $2$-disc and thus nonsingular, but when filtered
by the cone-vertex and using appropriate perversities, may nevertheless
have nonvanishing intersection homology in positive degrees.
\begin{example}
As an implicit surface, the cone on $S^1$ is given by
\[ \cone (S^1)=
\{ (x,y,z) \in \real^3 ~|~ x^2 + y^2
   = c^2 (z-1)^2 \mbox{ and } 0\leq  z\leq 1 \}
\]
with $c$ being the base/height ratio of the cone; the cone vertex
is $s=(0,0,1)$. When filtered by the cone vertex of codimension $2$,
the cone has intersection homology
\begin{align*}
IH^{\bar{0}}_{i} (\cone S^1;\intg_2) &= 
\begin{cases}
\intg_2, &\mbox{ if } i= 0\\
0, &\mbox{ else.}
\end{cases}
\\
IH^{\overline{-1}}_{i} (\cone S^1;\intg_2) &= 
\begin{cases}
\intg_2, &\mbox{ if } i=0,1 \\
0, &\mbox{ else.}
\end{cases}
\end{align*}
(Recall that we write $\intg_2 = \intg/2\intg$.)
Choosing $c=3$,
we then produced a set of 34 random points in $\real^3$ 
by rejection sampling. In more detail, independent sampling of two uniformly 
distributed points on the interval $[-1,1]$ for the $x$- and $y$-coordinate 
and on the interval $[0,1]$ for the z-coordinate yields a triple 
$(x,y,z) \in \real^3$. If such a point satisfied the equation that defines 
the cone up to an error of $0.001$, we accepted 
that point as part of our sample. The cone vertex $s$ is part of the 
sample as well.
Applying Vietoris-Rips type methods, we 
generated a $3$-dimensional simplicial complex $K$ with polyhedron $X=|K|$
depicted in Figure \ref{fig.sampCone}. 
Taking $S^0 = \{ s \}$ endows $K$ with the structure of a 
divided complex.
Using the natural geometric codimension $\codim \{ s \}=3$ in the
$3$-dimensional polyhedron $X$, the intersection homology 
is given by
\begin{align*}
IH^{\bar{0}}_{i} (X;\intg_2) &= 
\begin{cases}
\intg_2, &\mbox{ if } i=0,1 \\
0, &\mbox{ else.}
\end{cases}
\\
IH^{\overline{-1}}_{i} (X;\intg_2) &= 
\begin{cases}
\intg_2, &\mbox{ if } i=0,1 \\
0, &\mbox{ else,}
\end{cases}
\end{align*}
which does not agree with the above intersection homology of the
underlying cone.

A stratified spine $(X',\Sigma' = S^0$ of $(X,\Sigma = S^0)$ is shown 
in Figure \ref{fig.sampConeLSpine}.
Using the algorithms described in Section \ref{sec.implementation},
this stratified spine was obtained on a computer by carrying out
$93$ elementary $C$-collapses and $4$ elementary intermediate collapses.
Note that the stratified spine is in fact stratum-preserving
homeomorphic to the cone on $S^1$, which implies in particular 
that the intersection homology of the stratified spine agrees with
the above intersection homology of $\cone (S^1)$
(which can also be algorithmically verified.)
Performing further unrestricted collapses on the stratified spine would
show that an ordinary spine of $X$ is a point. This shows that $X$
is (simple) homotopy equivalent to the cone on $S^1$. So the ordinary
spine does not preserve enough structure to carry the correct
intersection homology.
The stratified spine contains substantially fewer simplices than
$K$ and so the linear algebra packages performing rank computations
to determine the intersection homology can operate on substantially
smaller matrices. We find that computing spines first is often a way to
avoid sparse matrix techniques altogether.
In light of our invariance results, Corollaries
\ref{cor.stratifiedspines} and \ref{cor.ihstable},
the discrepancy between the intersection homology of the
Vietoris-Rips polyhedron $X$ and the intersection homology of
the stratified spine is due to the discrepancy between the codimensions
of the singular point. Thus computing stratified spines is in particular
a way of obtaining better estimates of correct codimensions.
\begin{figure}
\centering
    \begin{minipage}{0.45\textwidth}
        \centering
        \includegraphics[width=1\textwidth]{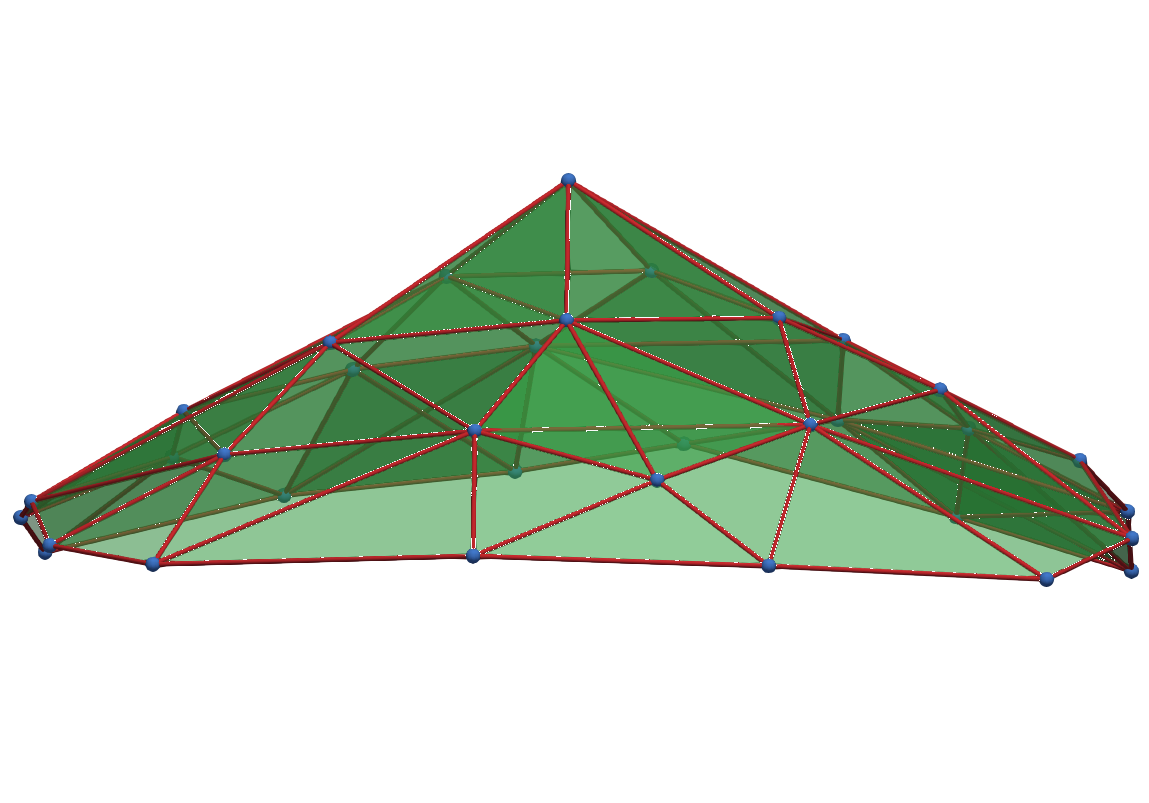}
		\put(-100,95){Cone point}
		\caption{\mbox{Sampled cone $X$.}} 
		\label{fig.sampCone}
    \end{minipage}\hfill
    \begin{minipage}{0.45\textwidth}
        \centering
        \includegraphics[width=1\textwidth]{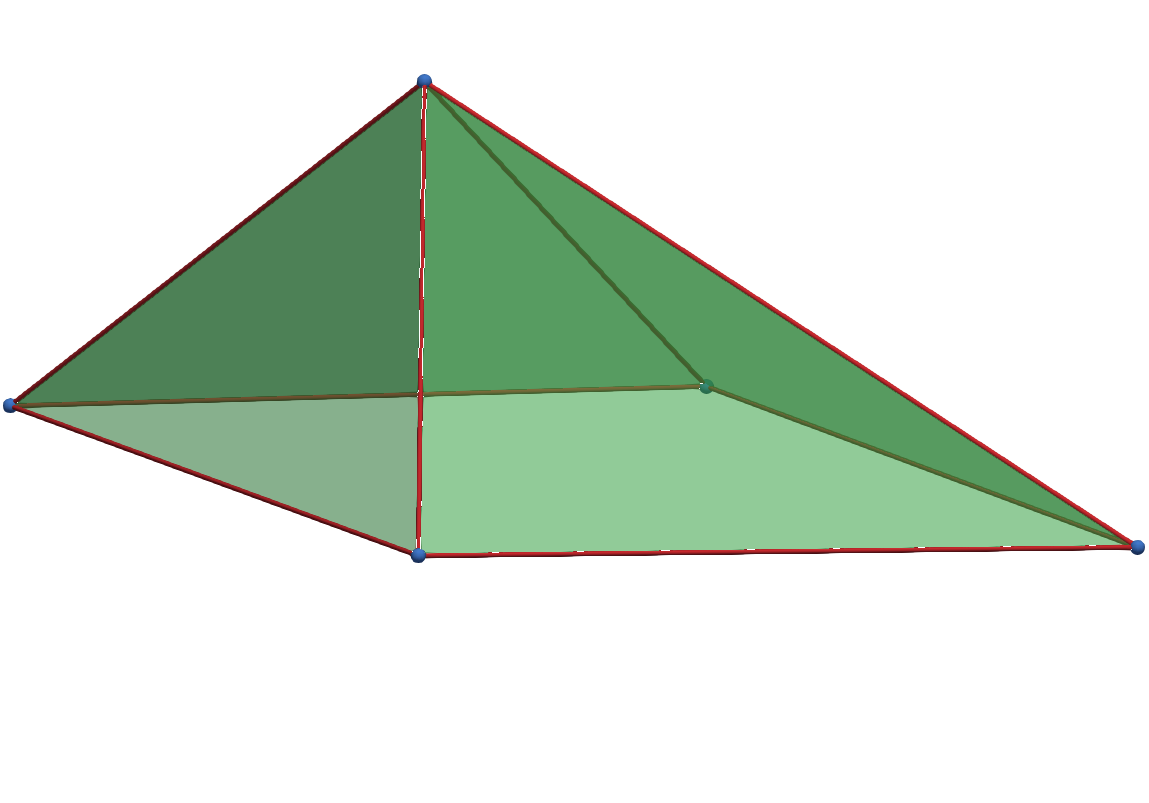}
		\put(-123,109){Cone Point}
		\caption{\mbox{Stratified spine $X'$.}}
		\label{fig.sampConeLSpine}
    \end{minipage}
\end{figure}
\end{example}

The next example deals with a much larger dataset on a pinched torus.
\begin{example}
Topologically, the pinched torus is the quotient space of the
torus $T^2 = S^1 \times S^1$ obtained by collapsing 
a circle $\pt \times S^1$. The image of this circle is the singular
point $s$ of the pinched torus.
Points were sampled from an embedded pinched torus. 
We sampled independently a pair of identically uniformly 
distributed points on the intervall $[0,2\pi]$. Then a parametrization of 
the surface was used to transform the sample to lie on the pinched torus. 
Applying Vietoris-Rips methods resulted in a simplicial complex $K$ with
$543$ $0$-simplices, $2109$ $1$-simplices, $2057$ $2$-simplices 
and $490$ $3$-simplices. 
Figure \ref{fig.PT2VR} shows the polyhedron $X=|K|$ of this 
complex. Figure \ref{fig.PT2VRVol} 
shows the volume of all $490$ 3-simplices. 
Putting $S^0 = \{ s \}$ endows $K$ with the structure of 
a divided simplicial complex. The polyhedron $X$ of the associated 
layered complex $(K,C,S=S^0)$ is filtered by $\Sigma = \{ s \}$.
A stratified spine $X'$ of $X$ was
determined algorithmically and is displayed in 
Figure \ref{fig.LaySpine}. This computation comprised
$978$ elementary $C$-collapses and $11$ elementary intermediate collapses.
Figure \ref{fig.InterLaySpine} shows an intermediate stage in the process of 
determining the stratified spine. At that point, all $C$-collapses 
have been executed and the next step is collapsing all intermediate 
simplices as far as possible. After the whole process, \emph{all} 
$3$-simplices initially present have been removed and the overall size of the 
simplicial complex has been reduced to
$537$ $0$-simplices, $1610$ $1$-simplices and $1074$ $2$-simplices. 
The stratified spine $X'$ is filtered by 
$|S^0| = X^0 \subset X^2 = X'$ and therefore simplicial 
intersection homology groups are defined. Using our implementation,
we obtain for perversity $\bar{0}$ simplicial intersection homology 
of the stratified spine:
\[
IH^{\bar{0}}_{i}(X';\intg_2) =
\begin{cases}
 \intg_2, &\mbox{ if } i= 0,2 \\
 0, &\mbox{ else.}
\end{cases} 
\]
These results agree with the singular perversity $\bar{0}$
intersection homology of the actual pinched torus.
They also agree with the intersection homology of the full
Vietoris-Rips polyhedron $X$, \emph{both} when $\{ s \}$ is
assigned formal codimension $3$ (the geometric codimension in $K$)
and when it is assigned formal codimension $2$ (the geometric codimension
in the stratified spine).

	\begin{figure}
    \centering
	\includegraphics[width=0.8\textwidth]{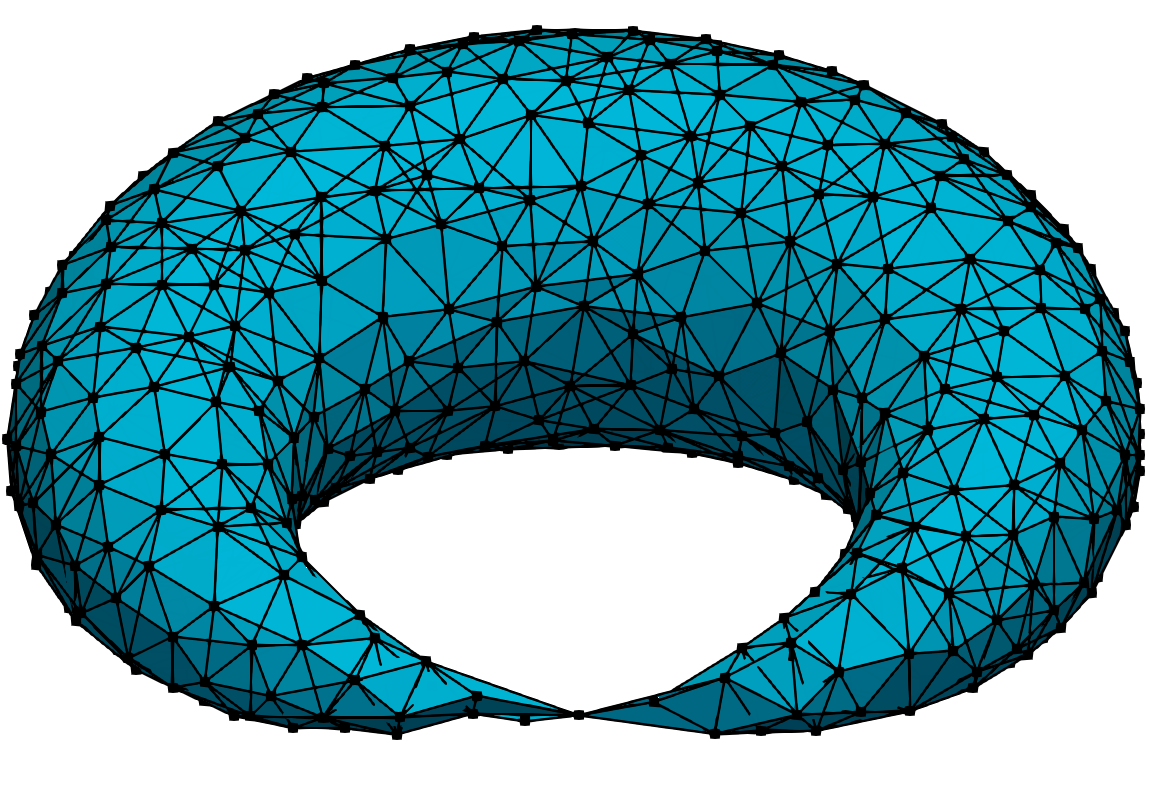}
	\put(-150,27){$s$}
	\caption{Simplicial complex from Vietoris-Rips methods}
	\label{fig.PT2VR}
	\end{figure}
	\begin{figure}[H]
	\centering
	\includegraphics[width=0.8\textwidth]{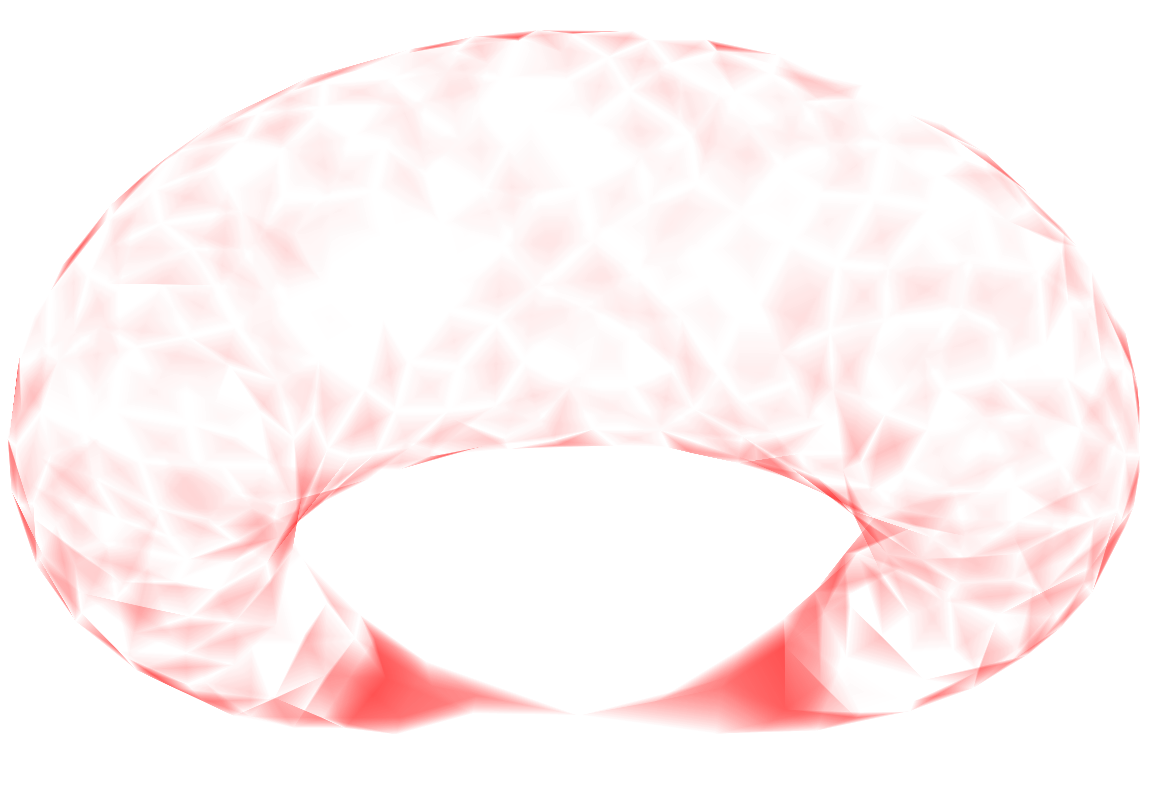}
	\caption{Volume extracted from 3 simplices}
	\label{fig.PT2VRVol}
	\end{figure}
	\begin{figure}[H]
	\centering
	\includegraphics[width=0.8\textwidth]{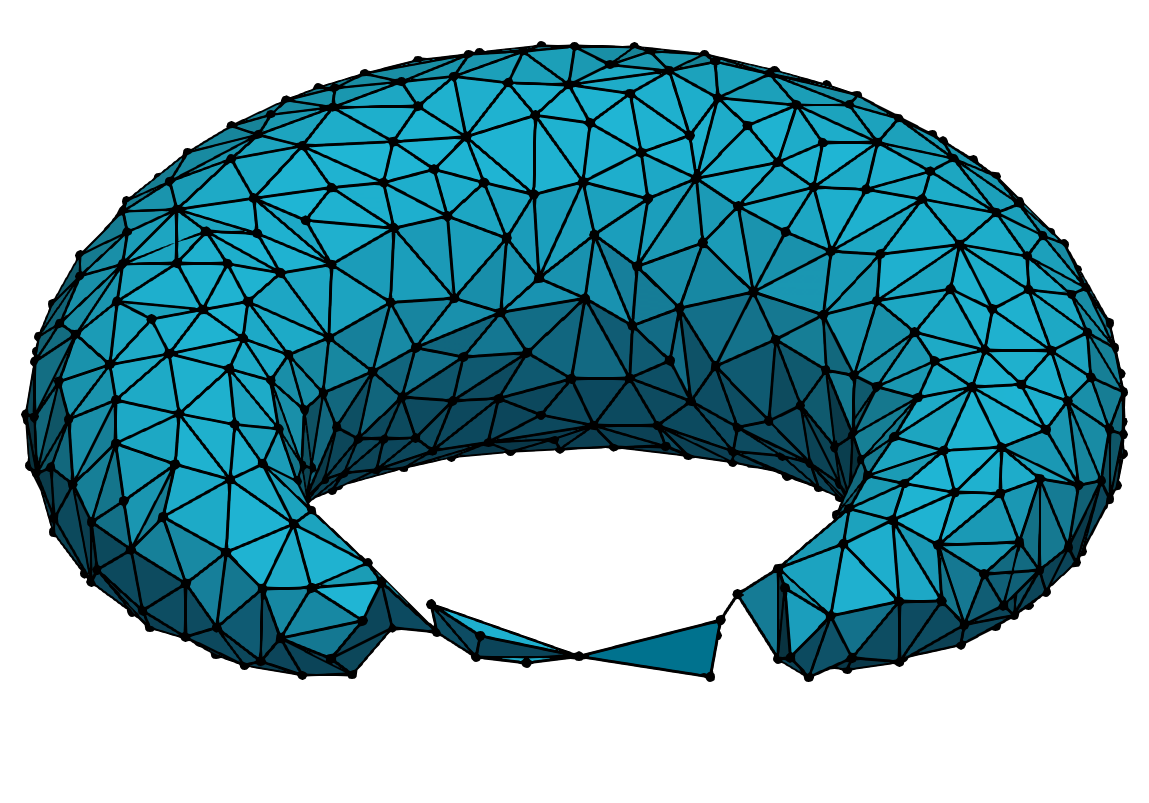}
	\put(-150,42){$s$}
	\caption{Intermediate stage in the collapse process}
	\label{fig.InterLaySpine}
	\end{figure}
	\begin{figure}[H]
	\centering
	\includegraphics[width=0.8\textwidth]{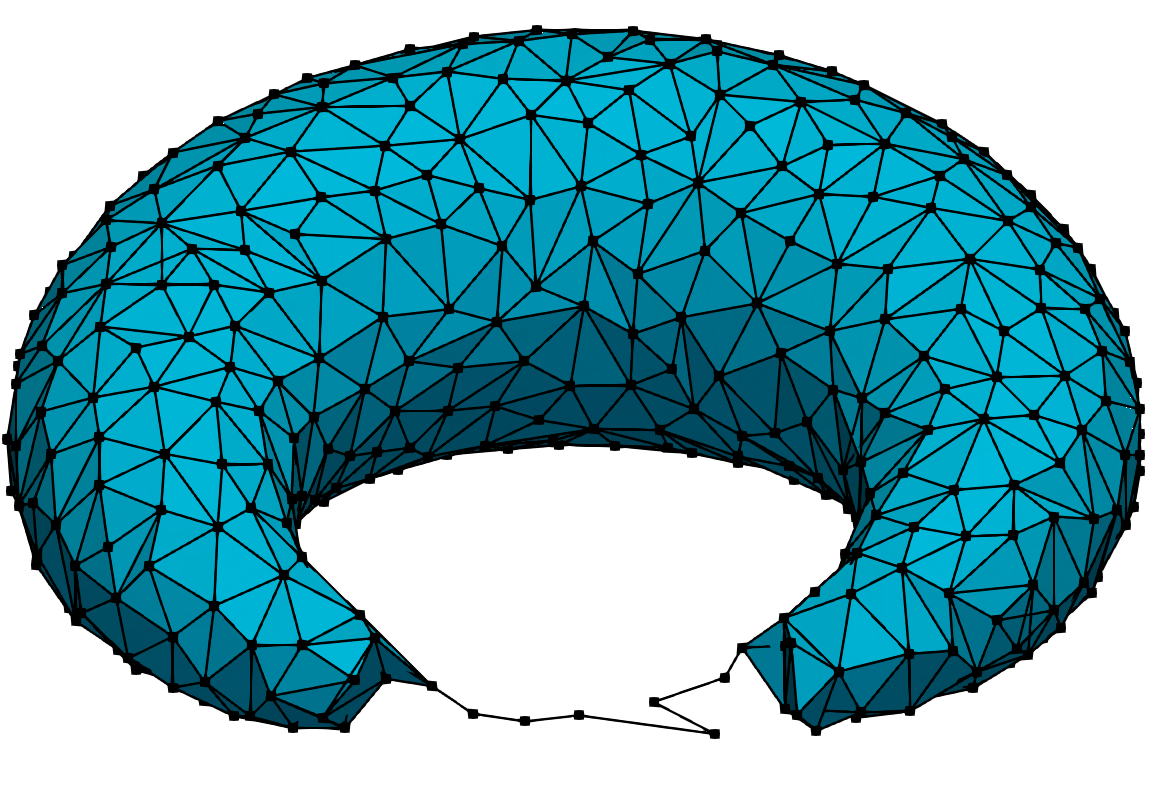}
	\put(-150,27){$s$}
	\caption{Stratified Spine.}
	\label{fig.LaySpine}
	\end{figure}
\end{example}

\begin{example}
We may return to the figure eight space $S^1 \vee S^1$
of Example \ref{expl.s1vs112pts} and reconsider it from the
stratified spinal point of view.
The (ordinary) spine $X'$ shown in Figure
\ref{fig.S1vS1VR12ptsSpine} is in fact also
a stratified spine of $X$.
The green $2$-simplex in the lower left of Figure
\ref{fig.S1vS1VR12pts} is removed by an elementary $C$-collapse,
while the other two green $2$-simplices, incident to the singular
point, are removed by elementary intermediate collapses.
So here the stratified spine happens to be stratified homeomorphic
to the ordinary spine and both compute the intersection
homology of $S^1 \vee S^1$.
\end{example}

Finally, it is clear that the Vietoris-Rips
method (and similar methods) may create local thickenings near
the singularities
that cannot be removed by stratified collapses
to an extent that would make the stratified homotopy type of the
original space, or at least its intersection homology, visible.
An example polyhderon $X$ 
obtained by sampling two tangent circles, topologically comprising
$S^1 \vee S^1$, is shown in 
Figure \ref{fig.S1vS1VR}. 
Finding general characterizations of such
configurations, as well as, perhaps even more importantly, improving
the Vietoris-Rips method in stratified situations,
is an interesting question that requires methods outside the scope
of the present paper.
\begin{figure}
	\includegraphics[width=0.9\textwidth]{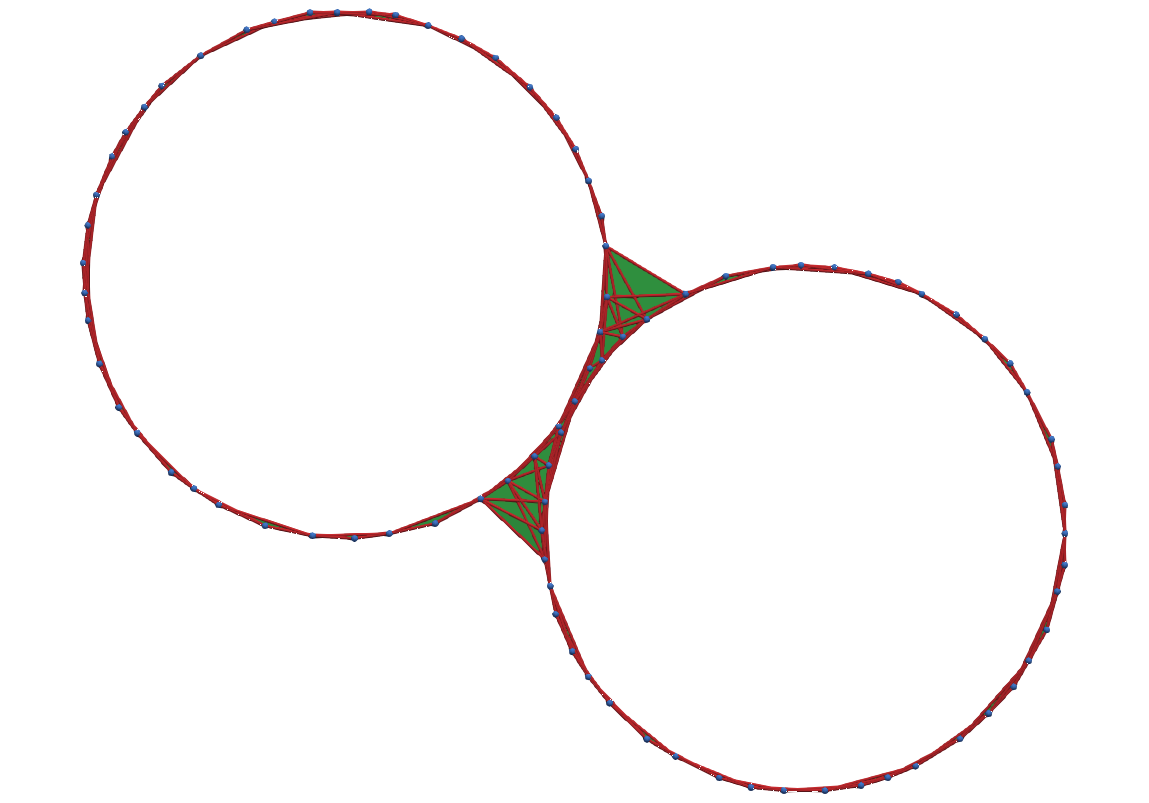}
	\put(-173,115){$s$}
	\caption{Tangent circles.}
	\label{fig.S1vS1VR}
\end{figure}
It is perhaps interesting to observe that
$X$ contains an \emph{ordinary} spine  $X'$,
shown in Figure \ref{fig.S1vS1Spine}, which is stratified homeomorphic to
$S^1 \vee S^1$.
\begin{figure}
	\includegraphics[width=0.9\textwidth]{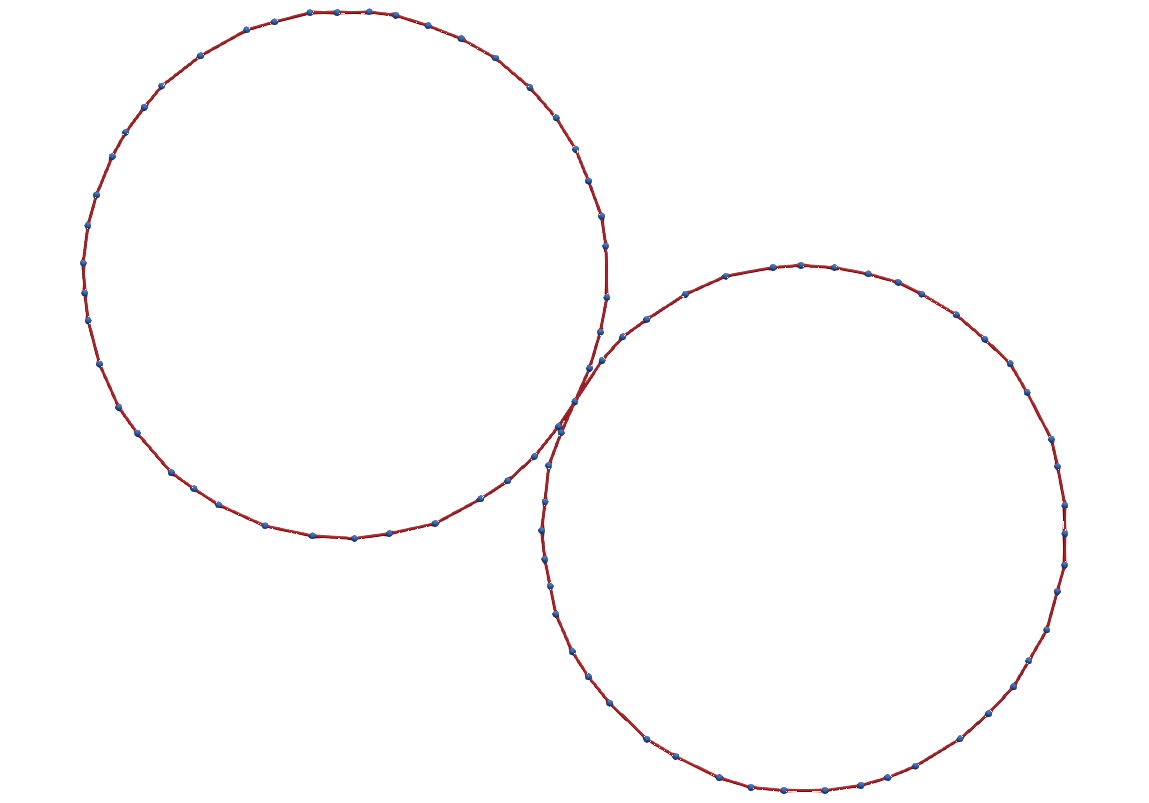}
	\put(-173,115){$s$}
	\caption{Ordinary Spine $X'$ of $X$.}
	\label{fig.S1vS1Spine}
\end{figure}
One could interpret this as an indication that there exist 
further types of
intermediate collapses. These may be formulated and explored in
future work. Furthermore, if one is merely interested in
intersection homology $IH^{\bar{p}}_*$ for a particular perversity
$\bar{p}$, then one might expect to obtain a relaxed perversity-dependent version of
the intermediate collapse condition (iv).

\end{document}